\DeclareMathOperator{\D}{\Delta}
\DeclareMathOperator{\sign}{sign}
\DeclareMathOperator{\degree}{degree}
\DeclareMathOperator{\interior}{interior}
\DeclareMathOperator{\lie}{Lie}
\DeclareMathOperator{\image}{image}
\newtheorem {theorem} {Theorem} [section] 
\newtheorem {meta-theorem} {Meta-Theorem} [section]
\newtheorem{lemma}[theorem] {Lemma} 
\newtheorem {question}  [theorem] {Question}
\newtheorem {definition} [theorem] {Definition} 
\newtheorem {proposition}  [theorem]{Proposition} 
\newtheorem {corollary}[theorem]  {Corollary} 
\newtheorem {example} [theorem]  {Example}
\newtheorem {notation}[theorem] {Notation}
\newtheorem {remark} [theorem] {Remark}
\numberwithin{equation}{section}
\DeclareMathOperator{\colim}{colim}
\DeclareMathOperator{\obj}{obj}
\begin{document}  
\today
\author {Yasha Savelyev} 
\address{University of Colima, faculty of science, 
Bernal Díaz del Castillo 340,
Col. Villas San Sebastian,
28045 Colima Colima, Mexico}
\email{yasha.savelyev@gmail.com}
\title[Smooth simplicial sets and universal
Chern-Weil]{Smooth simplicial sets and universal Chern-Weil
for infinite dimensional groups}
\begin{abstract}
We give the construction of the universal, natural up to
homotopy Chern-Weil differential graded algebra homomorphism:
$$cw: \mathcal{I} (G)  \to \Omega  ^{\bullet } (BG, \mathbb{R})$$
for infinite dimensional Milnor regular Lie groups $G$,
where $\Omega ^{\bullet}(BG, \mathbb{R})$ is
a certain de Rham algebra of $BG$ (Milnor $BG$ up to a natural weak
homotopy equivalence) and where $\mathcal{I}
(G)$ is the algebra of continuous, $Ad _{G}$ invariant,
symmetric multilinear functionals on
the Lie algebra. In particular, this applies to the group of
compactly generated Hamiltonian symplectomorphisms,
using which we verify a conjecture of Reznikov.
For the construction of $cw$ we introduce a basic geometric-categorical
notion of a smooth simplicial set. Loosely, this is to Chen
spaces as simplicial sets are to spaces. We then give a new
construction of the classifying space of $G$ as 
a smooth Kan complex, with the geometric realization weakly
equivalent to the Milnor $BG$.  
\end{abstract}
 \maketitle  
\tableofcontents
\section{Introduction} 
\label{sec:Introduction} First, we introduce the notion of a smooth 
simplicial set, which is most directly an analogue
in the world of simplicial sets of Chen spaces
~\cite{cite_ChenIteratedPathIntegrals}, and less directly
of diffeological spaces of Souriau
~\cite{cite_SouriauDiffeology}.  The Chen/diffeological
spaces are perhaps the most basic notions of a ``smooth space''. 
The language of smooth simplicial sets turns out to be a powerful
tool to resolve the long standing problem of the
construction of the universal Chern-Weil $dg$ algebra homomorphism for infinite dimensional Lie groups.   

\subsection{Generalized Lie groups} \label{sec_Generalized Lie groups}
By an infinite dimensional Lie group $G$ we will mean a Lie group whose
underlying infinite dimensional smooth manifold is 
modeled on a locally convex topological vector space.
For Chern-Weil theory we will need that $G$ is Milnor regular, see ~\cite[Definition
II.5.2]{cite_Neeb}. This condition may not ultimately be
necessary, see Remark \ref{rem_curvatureform}. 
As explained in
~\cite{cite_Neeb},  a basic example of a regular group is the
group $\operatorname {Diff} _{c} (M)$ of compactly supported
diffeomorphisms of a smooth finite dimensional
manifold. In this case regularity boils down to
the fact that a smooth family of smooth (compactly
supported) vector fields $\{X
_{t}\}$ integrates to a smooth family of self-diffeomorphisms (a.k.a. a smooth isotopy), and the
dependence of this on $\{X _{t}\}$ is itself smooth.
\footnote{As I understand, there are no known examples of
locally convex Lie groups that are not regular, see however
~\cite{cite_MagnotNotRegular} for a diffeological example.}
The group $\operatorname {Diff} _{c} (M)$ in addition has
the homotopy type of a CW complex, see
~\cite{cite_overflowCWstructureDiff}. This also applies to
other $LF$ (limit Frechet) generalized Lie groups like the
group of compactly generated Hamiltonian symplectomorphisms,
which forms an important example for us.

%

Since this definition also encompasses standard finite dimensional Lie groups, it
is convenient to give this a new working name: \textbf{\emph{generalized Lie
group}}. 

\subsection{Universal $dg$ Chern-Weil homomorphism} \label{sec_Universal $dg$ Chern-Weil}
One problem of topology is the construction
of a ``smooth structure'' on the Milnor classifying space
$BG$ of a generalized (real or complex) Lie group $G$.  
There are specific
requirements  for what such a notion of a smooth
structure should entail. At the very least we hope
to be able to carry out Chern-Weil theory universally
on $BG$. We now describe this.

In what follows, we keep to real generalized Lie groups $G$, but there is no essential difficulty to extending our theory to the complex case.
Denote by $\mathcal{I} (G)$ the
algebra over $\mathbb{R} ^{} $ of $Ad _{G}$ invariant,
continuous, symmetric multilinear functionals on the
Lie algebra $\mathfrak g$, see
Section \ref{sec_The algebraI}.

Denote by $\Omega  ^{\bullet} (BG,
\mathbb{R})$ the ``$dg$ algebra of differential forms on
$BG$'' (for the moment left unspecified) whose cohomology
is  $H ^{\bullet}(BG, \mathbb{R} ^{} )$. Then we want a ``purely'' differential geometric
construction of the Chern-Weil $dg$ algebra homomorphism
(natural up to homotopy of $dg$ maps, Definition \ref{def_dghomotopy}):
\begin{equation*}
   cw: \mathcal{I} (G)  \to \Omega  ^{\bullet} (BG,
   \mathbb{R}),
\end{equation*}
where the differential on the left is trivial (and the
grading is in even degrees).  
The goal is to set up all structures in such a way that the
differential geometry in this construction becomes trivial,
modulo the inherent differential geometry involved in
Chern-Weil forms.

For finite 
dimensional Lie groups,  the cohomological universal
Chern-Weil homomorphism:
\begin{equation} \label{eq_chernweilbasic}
hcw: \mathcal{I} (G)  \to H  ^{\bullet } (BG, \mathbb{R})
\end{equation}
has been studied for instance by Bott 
~\cite{cite_BottChernWeil}. 
It has also been directly constructed in
Dupont~\cite{cite_DupontCharClasses} using simplicial techniques,
also important for us here. However, to produce a (natural) $dg$
enhancement of this, some geometric theory is
ostensibly required. 
For example, working on the  $dg$ level is important  in
Freed-Hopkins ~\cite{cite_FreedHopkins}. However they work
on smooth classifying stacks (and with classical Lie groups $G$), rather then on
the Milnor spaces $BG$.


\subsubsection{Smooth structures on $BG$} \label{sec_Smooth structures on $BG$}
One candidate for a smooth structure on $BG$ is
some kind of 
diffeology. For example Magnot and
Watts~\cite{cite_MagnoWattsDiffeology}  construct a
natural diffeology on the Milnor classifying space
$BG$. Another approach to this is contained in 
Christensen-Wu~\cite{cite_ChristensenWusmoothclassifying}, where the authors also state their plan to develop 
some kind of universal Chern-Weil theory in the future.

A further specific possible requirement for the above
discussed ``smooth
structures'',  is that the
smooth singular simplicial set $BG _{\bullet } $ should have a geometric realization
weakly homotopy equivalent to $BG$. See for
instance ~\cite{cite_OhTanakaSmoothType} for one approach to this particular problem in the context of diffeologies. In the category of smooth simplicial sets we have a stronger
analogue of this, in the form 
of Proposition \ref{prop:smoothsimplices}. 
The latter and more specifically Theorem \ref{thm:classifyKan} is used in ~\cite{cite_SavelyevGlobalFukayaCategoryI} to prove universality of the global Fukaya category.

The structure of a smooth simplicial set
is initially more flexible than a space with
diffeology, but we may add further conditions, like the Kan
condition, which will be important for us. Given
a generalized Lie group $G$, for each choice of a particular
kind of Grothendieck universe $\mathcal{U} $ we construct a smooth simplicial set $BG
^{\mathcal{U}}$ with a specific classifying
property. We note that 
this is \emph{not}  the Milnor construction. But it will be
shown that $|BG ^{\mathcal{U} }|$ always has the weak
homotopy type of Milnor $BG$.  

The  simplicial set $BG 
^{\mathcal{U}}$ is moreover a Kan
complex, and so is a basic example of what we call a smooth Kan
complex. Our constructions, will work naturally on $BG
^{\mathcal{U} }$ rather than its geometric realization.
And all the desires of ``smoothness'' mentioned above then is
some sense hold true for $BG
^{\mathcal{U}}$ via its smooth Kan complex
structure. 
\subsubsection{Cohomological Chern-Weil homomorphism} \label{sec_Basic statement}
We first note that the cohomological Chern-Weil
homomorphism \ref{eq_chernweilbasic} has a direct extension
for generalized Lie groups. The base space $Y$ of ordinary
smooth $G$-bundles \footnote{A $G$-bundle is henceforth
a short name for: a principal $G$ bundle.} is throughout assumed to be a finite
dimensional smooth manifold, without corners unless
specified so. The following is proved in Section
\ref{sec_Universal cohomological Chern-Weil homomorphism}.
\begin{theorem} \label{thm:A}
Let $G$ be a generalized Lie group then there is an algebra homomorphism:
\begin{equation*}
   hcw: \mathcal{I} (G)  \to H  ^{\bullet } (BG, \mathbb{R}).
\end{equation*}
This has the following property. Suppose that $P \to Y$ is a smooth
$G$-bundle over a smooth manifold $Y$, and
   $$hcw ^{P}: \mathcal{I} (G)  \to H  
   ^{\bullet } (Y,
   \mathbb{R}) $$  is the associated Chern-Weil map. Then
	 $$hcw ^{P}
	 = f _{P} ^{*} \circ hcw $$ where $$f _{P}: Y \to
	 BG$$
   is the classifying map of $P$ and $$f _{P} ^{*}: H   ^{\bullet } (BG,
	 \mathbb{R} ^{} ) \to H  ^{\bullet} (Y, \mathbb{R}
	 ^{} )$$ the induced algebra map. 
\end{theorem}
\subsection{$dg$ Enhancement}
\label{sec_Enhancement to a $dg$ homomorphism}
In the infinite dimensional case, the $dg$ algebra
$\mathcal{I} (G)$ may not be freely
generated \footnote{This was
nicely pointed out to me by a referee, using Reznikov's
tensors (appearing just ahead) as an example.}. Therefore we cannot readily enhance
the map $hcw$ to a differential graded map purely formally.
Furthermore, even when a formal enhancement is possible 
it may not be adequate for more in depth geometric applications like the theory of
Cheeger-Simons characters,
~\cite{cite_CheegersSimonsCharacters}, which needs
suitably natural differential forms. 

We can at first avoid Grothendieck universes in the
statement, and in fact get a purely algebraic-topological result.
We will however need 
the notion of geometric homotopy of $dg$ maps \footnote {A
$dg$ map will be shorthand for
$dg$ algebra homomorphism.} as given
in Definition \ref{def_dghomotopy}, the latter is shown in
the Appendix \ref{appendix_Ainfhomotopy} to induce an $A _{\infty}$ homotopy of $dg$
maps. However, unless specified otherwise homotopy will mean
geometric homotopy throughout.
Let $A (BG)$ denote the Whitney-Sullivan commutative $dga$ of 
$BG$ over $\mathbb{R} ^{} $, as reviewed in Section
\ref{sec_Whitney-Sullivan de Rham algebra of a topological
space}. The following is also proved there:
\begin{theorem} \label{thm_APLMain}
Let $G$ be a generalized Lie group having the homotopy of
a CW complex (e.g. $G = \operatorname {Diff} _{c} (M)$). Then there is a lifting: 
\begin{equation*}
\begin{tikzcd}
& A (BG) \ar [d] \\
\mathcal{I} (G) \ar[r, "hcw"] \ar[ur, "cw"] & H ^{\bullet
} (BG, \mathbb{R} ^{} ),
\end{tikzcd}
\end{equation*}
where the arrow on the right is the cohomology projection,
and $cw$ is a $dg$ map that is natural up to
homotopy.  
\end{theorem}
\begin{remark} \label{rem_} As one ingredient the proof uses 
the Grothendieck's axiom of universes, as there are no universes in the
statement one may wonder if there is a pure ZFC proof of the
above.
\end{remark}

There is an ostensibly stronger formulation of the above.  More specifically, once
a ``smooth model''  for $BG$ as a smooth Kan complex $BG
^{\mathcal{U} }$ is
fixed, there is a canonical ``de Rham algebra'' $\Omega
^{\bullet } (BG ^{\mathcal{U}}, \mathbb{R} ^{} )$, as described in Section \ref{sec:Differential:forms}. 
We also need some additional ingredients. If $P \to Y$ is
a $\mathcal{U} $-small $G$-bundle, then there is
a certain classifying simplicial map $f _{P ^{\Delta^{} }}:
Y _{\bullet }  \to BG ^{\mathcal{U}}$, see Theorem
\ref{thm:classifyKan}, where $Y _{\bullet}$ denotes the
smooth singular set of $Y$. 
Also there is an obvious natural $dg$ map:
$$\Theta: \Omega ^{\bullet} (Y, \mathbb{R} ^{} ) \to \Omega
^{\bullet} (Y _{\bullet }, \mathbb{R} ^{} ).$$ The following is proved in
Section \ref{sec_Universal dg Chern-Weil homomorphism
proof}.
\begin{theorem} \label{thm_ChernWeildgIntro}
Let $G$ be a generalized Lie group and $\mathcal{U}$
a $G$-admissible Grothendieck universe. For each choice of a $G$-connection $D$ on the
universal $G$-bundle $EG ^{\mathcal{U} } \to BG
^{\mathcal{U}}$  there is
a natural $dg$ map: 
\begin{equation*}
   cw ^{D}: \mathcal{I} (G)  \to \Omega  ^{\bullet
	 } (BG ^{\mathcal{U}}, \mathbb{R}),
\end{equation*}
whose homotopy class is independent of the choice of $D$.
%
The map has the following property. Suppose that $P \to Y$ is a $\mathcal{U} $-small smooth
$G$-bundle, over a smooth manifold $Y$, and
   $$cw ^{P}: \mathcal{I} (G)  \to \Omega  
   ^{\bullet } (Y,
   \mathbb{R}) $$  is the associated standard Chern-Weil
	 $dg$ map
	 (natural up to homotopy). Then $$\Theta  \circ cw ^{P}
	 \simeq f _{P ^{\Delta^{} }} ^{*} \circ cw, \quad
	 (\text{homotopy relation}).  $$  
\end{theorem}
\subsection{An example: the group of Hamiltonian symplectomorphisms} \label{sec_An example}
Here is one concrete example, with more details in Sections
\ref{sec:Chern-WeiltheoryonBHam} and \ref{sec:couplingClass}.   Let $\mathcal{H} = 
\operatorname {Ham} (M, \omega)  $ denote the generalized
Lie group of compactly generated Hamiltonian
symplectomorphisms of some symplectic $2n$-manifold $(M,
\omega )$. Here $\phi$ is compactly generated means that
there is a smooth compactly supported $H: M \times
[0,1] \to \mathbb{R} ^{} $ s.t. $\phi $ is the time one map
of the Hamiltonian flow $\{X _{t}\} $, $\omega (X _{t}, \cdot) = dH _{t} $.

Let $\mathfrak 
h $ denote the Lie algebra of $\mathcal{H} $. When $M$ is compact $\mathfrak 
h$  is naturally isomorphic to the space of mean 0 (with
respect to $dvol _{\omega} = \omega ^{n}$) smooth functions on 
$M$; otherwise it is the space of all smooth 
compactly supported functions.  In 
~\cite{cite_ReznikovCharacteristicclassesinsymplectictopology} 
Reznikov defined $Ad _{\mathcal{H} }$ invariant, continuous,
symmetric multilinear functionals 
$\{r _{k} \} _{k \geq 1}$  on the Lie algebra 
$\mathfrak h$. These are defined by:
\begin{equation*}
(H _{1}, \ldots, H _{k}) \mapsto \int _{M} H _{1} \cdot
\ldots \cdot H _{k} \, \omega ^{n}.
\end{equation*}
Denote by $\mathcal{R}ez$ the sub-algebra
of $\mathcal{I} (\mathcal{H})$  generated by
$\{r _{k}\}$. 

By Chern-Weil theory we get cohomology classes $c
^{r _{k}} (P) \in H ^{2k} (X, \mathbb{R} ^{} ) $ for any
smooth $\mathcal{H} $-bundle $P$  over a smooth manifold $X$.
The following results are proved in Section
\ref{sec:Chern-WeiltheoryonBHam}.
Using Theorem \ref{thm_ChernWeildgIntro} we get: 
\begin{corollary}
\label{thm:RezExtends} 
There is a $dg$ map (natural up to
homotopy)
\begin{equation*}
   rez: \mathcal{R}ez  \to \Omega  ^{\bullet } (B
	 \mathcal{H} ^{\mathcal{U}},
   \mathbb{R}),
\end{equation*}
satisfying the restriction property as in Theorem \ref{thm_ChernWeildgIntro}. And in particular, 
there are universal Reznikov cohomology 
   classes $c ^{r _{k}}  \in H ^{2k} (B 
   \mathcal{H}, \mathbb{R} ^{} ) $, satisfying 
   the following.   Let $Z \to Y$ be a smooth principal
$\mathcal{H}$-bundle.  Let $c ^{r _{k}} (Z) \in H ^{2k} (Y) $
denote the Reznikov class. Then $$f _{Z}
^{*} c ^{r _{k}}  = c^{r _{k}}  (Z),
$$  where $f _{Z}: Y \to B \mathcal{H} $  is the
classifying map of the underlying topological
$\mathcal{H}$-bundle.
\end{corollary}
The second part of the corollary is an explicit form of a statement  asserted 
by Reznikov~\cite [page
12]{cite_ReznikovCharacteristicclassesinsymplectictopology}, on the extension of his classes to the universal 
level on $B \mathcal{H}$ \footnote{His assertion is left
without proof.}. The above corollary is actually
stronger, since we don't require compactness of $M$.

Likewise, we obtain a differential 
geometric proof that the Guillemin-Sternberg-Lerman 
coupling class $\mathfrak c (P) \in H ^{2} (P)  $  
~\cite{cite_GuilleminLermanEtAlSymplecticfibrationsandmultiplicitydiagrams}, 
~\cite{cite_McDuffSalamonIntroductiontosymplectictopology}  
  of a Hamiltonian fibration (Definition 
\ref{def:hamfibration})  has a universal representative. Specifically, let ${M} ^{\mathcal{H}} $ denote the $M$-fibration associated to the 
universal principal $\mathcal{H} $-fibration $\mathcal{E} 
\to B \mathcal{H}  $. (In other words the universal Hamiltonian 
$M$-bundle.) 
\begin{theorem}
\label{thm:couplingClass}  There is a cohomology 
class $\mathfrak c \in H ^{2} ({M} ^{\mathcal{H}} ) $ 
so that if $P \to X$ is a smooth Hamiltonian 
$M$-fibration and $\widetilde{f}: P \to {M} ^{\mathcal{H}}  $ 
the corresponding map then $\widetilde{f}  _{P} ^{*} \mathfrak c = \mathfrak c (P) $. 
\end{theorem}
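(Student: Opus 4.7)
The plan is to mimic the construction used for Theorem \ref{thm:A}, but with the coupling 2-form replacing the Chern-Weil polynomial evaluated on curvature, and with the total space of the universal $M$-bundle replacing the base. The core idea is that the Guillemin-Sternberg-Lerman class admits a canonical differential-geometric representative on any smooth Hamiltonian fibration (built from a Hamiltonian connection and the fiberwise symplectic form), and this representative is natural enough to assemble into a compatible family on a smooth simplicial model of $M^{\mathcal{H}}$.

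More concretely, let $\mathcal{E}^{\mathcal{U}} \to B\mathcal{H}^{\mathcal{U}}$ denote the universal smooth $\mathcal{H}$-bundle at the level of smooth simplicial sets and let $M^{\mathcal{H},\mathcal{U}}$ be the associated smooth simplicial $M$-bundle. A smooth $n$-simplex of $M^{\mathcal{H},\mathcal{U}}$ is, by construction, a smooth map $\Delta^n \to $ (total space of some smooth Hamiltonian $M$-fibration over $\Delta^n$). For each such simplex I assign the pullback of the canonical coupling 2-form of that Hamiltonian fibration. Because the coupling class is uniquely characterized and transforms correctly under pullback along smooth bundle maps, these local 2-forms agree on faces and degeneracies, giving a well-defined element in the de Rham-type complex of $M^{\mathcal{H},\mathcal{U}}$. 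Closedness is immediate from the fact that the coupling form is closed on each simplex. This produces a cohomology class $\tilde{\mathfrak c} \in H^2(|M^{\mathcal{H},\mathcal{U}}|,\mathbb{R})$.

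To transfer this class to the Milnor-type total space $M^{\mathcal{H}}$, I invoke the homotopy equivalence $|B\mathcal{H}^{\mathcal{U}}| \simeq B\mathcal{H}$ established earlier in the paper together with the M\"uller-Wockel equivalence between smooth and topological $\mathcal{H}$-bundles (as used for Theorem \ref{thm:A} and indicated in Remark \ref{remark:diffeologicalgroup}). This yields a cohomology class $\mathfrak c \in H^2(M^{\mathcal{H}},\mathbb{R})$. Naturality is then a direct consequence of the classifying property: given a smooth Hamiltonian $M$-fibration $P \to X$, its classifying map $f_P : X \to B\mathcal{H}$ lifts (up to the identifications above) to a smooth map of smooth simplicial sets, and the pullback of $\tilde{\mathfrak c}$ along the induced map $P \to M^{\mathcal{H},\mathcal{U}}$ is, simplex by simplex, the coupling form of the restriction of $P$ to that simplex; hence its cohomology class equals $\mathfrak c(P)$ by the uniqueness characterization of the coupling class.

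The main obstacle, as with Theorem \ref{thm:A}, is ensuring the transfer step from the smooth simplicial model to the Milnor $B\mathcal{H}$ is compatible with the differential-geometric representative, i.e., that the class living on $|M^{\mathcal{H},\mathcal{U}}|$ really corresponds under the homotopy equivalence to a well-defined class on the Milnor total space and that naturality against arbitrary smooth bundles (not only those arising from simplex-parametrized families) persists. This should go through essentially verbatim by the same argument used to deduce naturality in Theorem \ref{thm:A}; the only genuine new ingredient is the local piece, namely the existence and uniqueness of the coupling form on each smooth simplex of $B\mathcal{H}^{\mathcal{U}}$, which is classical and is the content invoked in Lemma \ref{lemma:pullbackChernWeil}-type statements.
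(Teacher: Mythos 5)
Your overall architecture matches the paper's: build a per-simplex coupling 2-form on the simplicial model $M^{\mathcal{U},\mathcal{H}}$ of the universal Hamiltonian fibration, assemble it into a degree-2 class on the geometric realization, transfer to the Milnor model via the homotopy equivalence of Theorem \ref{thm:homotopytypeBGU} (lifted to total spaces) together with M\"uller--Wockel, and deduce naturality as in Theorem \ref{thm:universalChernWeilclass}. That is exactly the route the paper takes.

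There is, however, one genuine gap in the local step. You write that you assign to each simplex ``the canonical coupling 2-form of that Hamiltonian fibration'' and that ``because the coupling class is uniquely characterized \dots these local 2-forms agree on faces and degeneracies.'' But there is no canonical coupling \emph{form}: the form $C_{\mathcal{A}}$ depends on a choice of Hamiltonian connection $\mathcal{A}$, and only its cohomology class is canonical. Uniqueness of the \emph{class} gives you at best that independently chosen coupling forms on a simplex and on one of its faces are cohomologous after restriction, not equal; and equality of forms under the face maps (as in \eqref{eq:sigmaForm}) is what you need to get a well-defined cochain on the colimit $|M^{\mathcal{U},\mathcal{H}}|$. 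The fix is the move the paper makes: first fix a \emph{simplicial} $\mathcal{H}$-connection $\mathcal{A}$ on $E\mathcal{H}^{\mathcal{U}}$ (this exists by Lemma \ref{lemma:connections:exist} and satisfies $P(f)^{*}\mathcal{A}_{\Sigma_2}=\mathcal{A}_{\Sigma_1}$ for morphisms of simplices), and then take the coupling form $C_{\mathcal{A},\Sigma}$ determined by $\mathcal{A}_{\Sigma}$ on each $M^{\mathcal{U},\mathcal{H}}_{\Sigma}$. Since the coupling form is a natural function of the connection (it pulls back to the coupling form of the pulled-back connection), compatibility of the connections forces strict compatibility of the forms, and the rest of your argument, including the independence of the resulting class from $\mathcal{U}$ (Lemma \ref{lemma:welldefinedc}) and the naturality statement, then goes through as you describe.
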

For $M$ closed this is proved by
Kedra-McDuff~\cite[Proposition
3.1]{cite_KedraMcDuffHomotopypropertiesofHamiltoniangroupactions}
using homotopy theory techniques.

Here is a basic application. Let $Symp (\mathbb{CP} ^{k}) $ denote the group of
symplectomorphisms of $\mathbb{CP} ^{k} $, that is
diffeomorphisms $\phi: \mathbb{CP} ^{k}  \to
\mathbb{CP} ^{k} $ s.t. $\phi ^{*} \omega _{0} =
\omega _{0}$ for $\omega _{0}$ the Fubini-Study symplectic
2-form on $\mathbb{CP} ^{k} $.  
Using the above corollary, we may obtain an elementary proof of the following theorem
of Kedra-McDuff:
\begin{theorem} [Kedra-McDuff] \label{thm:B} 
Let  $$i: BPU (n)  \to BSymp (\mathbb{CP}
^{n-1}) $$ be the natural map. 
Then $$i _{*}: H _{\bullet}(BPU (n), \mathbb{R} ^{})   \to
H _{\bullet} (BSymp (\mathbb{CP} ^{n-1}), \mathbb{R} ^{} ) $$ is an injection 
for all $n \geq 2$. 
\end{theorem}
More history and background surrounding these
theorems is in Sections
\ref{sec:universalChernWeil} and \ref{sec:Chern-WeiltheoryonBHam}.

\subsection{Other examples} \label{sec_Other examples of generalized groups}
One other basic set of examples of generalized Lie groups,
with a wealth of invariant polynomials on the Lie 
algebra, are the loop groups. That is the groups $LG, \Omega
G$, where $G$ is any (finite dimensional) Lie 
group, $LG$ is the free loop space, and $\Omega G$ is the
based loop space at $id$. See for instance
~\cite{cite_RosenbergLoopSpacesChern} for related
computations.
Loop groups are prominent in conformal field theory, 
see for instance \cite{cite_PressleySegalLoopgroups} for
the foundation of the subject. The relevant Chern-Weil
theory then has physical connotations. Other examples of infinite
dimensional Chern-Weil theory include:
~\cite{cite_MagnotChernForms},
~\cite{cite_MickelssonChernWeil},
~\cite{cite_RosenbergChernWeil},
~\cite{cite_RosenbergChern2}.


There are various precedents in
giving a differential geometric definition
of the (infinite dimensional group) Chern-Weil homomorphism in some cases, for example Magnot~\cite{cite_MagnotNonLinearGrassmanians}. 


\subsection {Acknowledgements}  I am grateful
to Daniel Freed, Yael Karshon, Dennis Sullivan, 
Egor Shelukhin, and Jean-Pierre Magnot, for comments 
and questions. I am also grateful to Dusa McDuff 
for explaining the proof of Corollary 1.6 in 
~\cite{cite_KedraMcDuffHomotopypropertiesofHamiltoniangroupactions}.

\section {Preliminaries and notation}
\label{sec:preliminaries}
 We denote by $\D$ the simplex category:
\begin{itemize}
	\item The set of objects of $\Delta^{} $ is $\mathbb{N}$.
	\item $\hom _{\D}
 (n, m) $ is the set of non-decreasing maps $[n] \to [m]$, where $[n]= \{0,1, \ldots, n\}$, with its natural order.
\end{itemize} 
  A simplicial set $X$ is a functor                                 
\begin{equation*}
   X: \D ^{op}  \to Set.
\end{equation*} 
The set $X (n)$ is called the set of $n$-simplices of $X$.
Given a collection of sets $\{X
(n) \} _{{n \in \mathbb{N}}} $, by a \textbf{\emph{simplicial structure}}
we will mean the extension of this data to a
functor: $X: \D^{op} \to Set $. 

$\D ^{d} _{simp}  $ will denote a particular
simplicial set: the standard representable
$d$-simplex, with $$\D ^{d} _{simp} (n) = hom _{\D} (n, d).                                         $$ 
A morphism or a map of simplicial sets, or a \textbf{\emph{simplicial map}}  $f: X \to Y$ is a
natural transformation $f$ of the corresponding
functors.  The category of simplicial sets will be
denoted by $s-Set$. 

By a $d$-simplex $\Sigma$ of a simplicial set $X$,
we may mean, interchangeably, either the element
in $X (d)$ or the map of simplicial sets:
$$\Sigma: \Delta ^{d} _{simp} \to X,  $$ uniquely corresponding to $\Sigma$ via the Yoneda lemma.  
If we write $\Sigma ^{d}$ for a simplex of $X$, it is implied that it is a $d$-simplex.  

With the above identification if $f: X \to Y$ is a map of
simplicial sets then 
\begin{equation} \label{eq_identification}
f (\Sigma ) = f \circ \Sigma. 
\end{equation}

\subsection {Topological simplices and smooth
singular simplicial sets} 
Let $\D ^{d} $ be the topological $d$-simplex,
i.e.  $$\Delta^{d} := \{(x _{1}, \ldots, x _{d})
\in \mathbb{R} ^{d} \,|\, x _{1} + \ldots + x _{d} \leq 1,
\text{ and } \forall i:  x _{i} \geq 0  \}. $$  
The vertices of $\Delta^{d} $ will be assumed ordered in
the standard way.
\begin{definition} \label{def:collared}  Let $X$
be a smooth manifold with corners, in the diffeological
sense ~\cite{cite_IglesiasDiffForms}. 
We say that a map $\sigma: \Delta^{n}  \to X$  is
smooth if it is smooth as a map of
manifolds with corners.  In particular, $\sigma: \Delta ^{n} \to \Delta^{d} $ is smooth iff it has an extension 
to a smooth map $V \supset \Delta^{n} \to \mathbb{R} ^{d}
$, with $V$ open. (See ~\cite[Theorem
4.1]{cite_KarshonDomainMaps}).
\end{definition}

We denote by $\D ^{d} _{\bullet}  $ the
smooth singular simplicial set of $\D
^{d} $, i.e.  $\Delta^{d} _{\bullet } (k) $ is
the set of smooth maps $$\sigma: \D ^{k} \to \D ^{d}.  $$   

We call an affine map $\D ^{k} \to \D ^{d}$ taking vertices
to vertices, in an order preserving way,
\textbf{\emph{simplicial}}.  
And we denote by $$\D ^{d} _{simp} \subset \D ^{d}
_{\bullet} $$ the sub-simplicial set consisting
of these topological simplicial maps.  That is  $\D ^{d} _{simp} (k)
$ is the set of simplicial maps $\Delta^{k} \to
\Delta^{d} $.  

Note that $\D ^{d} _{simp}$  is naturally
isomorphic to the standard representable
$d$-simplex $\D ^{d} _{simp}  $ as previously
defined, so that this abuse of notation should not cause
issues. Thus we may also understand $\D$ as the category with objects topological simplices $\D ^{d} $, $d \geq 0$ and morphisms simplicial maps. 

\begin{notation} \label{notation:1}
A morphism $m \in hom _{\Delta} (n, k) $ uniquely
corresponds to a simplicial map $\D ^{n} _{simp} \to \D ^{k}
_{simp}$, which uniquely corresponds to a topological simplicial map $\D
^{n} \to \D ^{k}$ (as defined right above). The
correspondence is by taking the maps $\D ^{n} _{simp} \to \D
^{k} _{simp}$, $\D ^{n} \to \D ^{k}$, to be determined by
the set map $m: \{0, \ldots, n\} \to \{0, \ldots, k\}$. We will not notationally distinguish these
corresponding morphisms. So that $m$ may simultaneously refer
to all of the above morphisms.
\end{notation}

\subsection {The simplex category of a simplicial
set} \label{section_simplexcategory}
\begin{definition} \label{definition_simplexcategory} For $X$ a simplicial set,
$\Delta (X)$ will denote a certain category called the
\textbf{\emph{simplex category of $X$}}. This is the
category s.t.: 
\begin{itemize}
	\item The set of objects $\obj \Delta (X) $  is the set
of simplices 
$$\Sigma: \Delta ^{d} _{simp} \to X, \quad d \geq
   0. $$
	 \item Morphisms $f: \Sigma _{1} \to \Sigma
      _{2}$ are commutative diagrams in $s-Set$:
\begin{equation} \label{eq:overmorphisms1}  
\begin{tikzcd}
\D ^{d} _{simp} \ar[r, "\widetilde{f} "] \ar [dr,
   "\Sigma _{1}"] &  \D ^{n} _{simp} \ar
   [d,"\Sigma _{2}"] \\
    & X 
\end{tikzcd}
\end{equation}
with top arrow a simplicial map, which we denote
by $\widetilde{f} $.
\end{itemize}
An object $\Sigma: \Delta
^{d} _{simp} \to X $ is likewise called a $d$-simplex, and
such a $\Sigma $ will be said to have degree $d$. We may
specify the degree with a superscript, for example $\Sigma
^{d}$ for degree $d$.
\end{definition}
\begin{definition}\label{def:nondegeneratesimplex}
  We say that $\Sigma ^{m} \in \Delta (X) $   is
   \textbf{\emph{non-degenerate}}  if there is no
   morphism 
	 \begin{equation*}
\begin{tikzcd}
\D ^{m} _{simp} \ar[r, "\widetilde{f} "] \ar [dr,
   "\Sigma ^{m}"] &  \D ^{n} _{simp} \ar
   [d,"\Sigma ^{n} "] \\
    & X 
\end{tikzcd}
	 \end{equation*}
	  s.t. $m >n$. 
\end{definition}
There is a forgetful functor $$T: \Delta (X) \to
\D,$$ $T (\Sigma ^{d}) = \Delta^{d}  _{simp}$,
$T(f) = \widetilde{f}$.  
We denote by $\Delta ^{inj} (X)  \subset \D (X) $ the sub-category with same objects, and morphisms $f$
such that $\widetilde{f} $ are monomorphisms, i.e.
are face inclusions. 
\subsection{Geometric realization} 
\label{sub:geometricRealization} Let $Top$ be the
category of topological
spaces. Let $X$ be a simplicial set, then define as usual the
\textbf{\textit{geometric realization}} of $X$ by
the colimit in $Top$:
\begin{equation*}
|X| : = \colim _{\Delta (X)} T,
\end{equation*}
for $T: \Delta (X) \to \Delta \subset Top$  as
above, understanding $\Delta^{} $  as a
subcategory of  $Top$ as previously explained.   

\section {Smooth simplicial sets}  
If $$\sigma: \Delta ^{d} \to \Delta ^{n}  $$ is a smooth
map
then we have an induced map of simplicial sets
\begin{equation} \label{eq:induced}
 \sigma _{\bullet}: \Delta ^{d} _{\bullet} \to \Delta ^{n} _{\bullet},
\end{equation}
defined by $$\sigma _{\bullet } (\rho)  = \sigma
\circ \rho.$$  
%

We now give a pair of equivalent definitions of smooth
simplicial sets. The
first is more hands on, and has a close connection to the
definition of Chen/diffeological spaces, while the second is
more conceptual/categorical. 
\begin{definition}[First
   definition]
   \label{def:smoothsimplicialset1}  A \textbf{\emph{smooth
	 simplicial set}} consists of the following data: 
\begin{enumerate}
\item A simplicial set $X$.
\item \label{property:induced} For each $\Sigma: \Delta ^{n}
_{simp}  \to X$ an $n$-simplex there is an assigned map of simplicial sets 
\begin{equation*} g ({\Sigma}): \Delta ^{n} _{\bullet} \to
X.
\end{equation*}
This satisfies:
\begin{enumerate}
	\item \begin{equation} \label{axiom_gSigma1} g (\Sigma)| _{\D ^{n} _{simp}  }  = \Sigma.
\end{equation} 
We abbreviate $g ({\Sigma}) $ by
$\Sigma _{*} $, when there is no need to disambiguate 
which structure $g$ is meant. 
\item  \label{axiom:pushforward} The following
   property will be called
      \textbf{\emph{push-forward functoriality}}: 
      \begin{equation} \label{eq_pushforwardfunct}
         (\Sigma _{*} (\sigma)) _{*}  = \Sigma _{*} \circ \sigma _{\bullet}  
\end{equation} where $\sigma: \D ^{k} \to \D
      ^{d}$ is a $k$-simplex of $\Delta ^{d}
      _{\bullet}$, and  where $\Sigma$ as before is a $d$-simplex of $X$. 
\end{enumerate}

\end{enumerate}  
\end{definition}
Thus, formally a smooth simplicial set is a 2-tuple $(X,g)$, satisfying the axioms above.  When there is no need to disambiguate we omit specifying $g$.

%
%

\begin{definition} A \textbf{\emph{smooth}} map between smooth simplicial sets $$(X _{1}, g _{1} ),    (X _{2},  g _{2} )    $$ is a simplicial map $$f: X _{1} \to X _{2},
   $$ which satisfies the condition:
   \begin{equation} \label{eq:coherence}
 \forall n \in \mathbb{N} \,  \forall \Sigma  \in X
   _{1} (n):    g _{2} (f (\Sigma)) = f \circ g _{1} (\Sigma),
   \end{equation}
   or more succinctly: 
\begin{equation*}
\forall n \in \mathbb{N} \,  \forall \Sigma  \in X
   _{1} (n):   (f(\Sigma)) _{*} = f \circ \Sigma _{*}.
\end{equation*} 
\end{definition} 
 A \textbf{\emph{diffeomorphism}} between smooth
   simplicial sets is defined to be a smooth map,
   with a smooth inverse.  

Now let $\Delta ^{sm}$ denote the category:
\begin{enumerate}
	\item The set of
objects of $\Delta ^{sm}$ is $\mathbb{N} $.
\item $\hom _{\Delta ^{sm}} ({k}, {n}) $ is the set of smooth maps $\Delta ^{k} \to \Delta ^{n}$.
\item The composition of morphism is the natural composition.
\end{enumerate}
\begin{definition} [Second definition]
\label{def:smoothsimplicialset2} A \textbf{\emph{smooth
simplicial set}} $X$ is a functor $X: (\Delta ^{sm}) ^{op} \to Set$.
A smooth map $f: X \to Y$  of smooth simplicial sets is defined to be a natural transformation from the functor
$X$ to $Y$.
\end{definition}
The equivalence of the above definitions is established
further ahead, as we need certain preliminaries. In what
follows, we refer to the first definition unless specified
otherwise.
\begin{remark} \label{remark_}
There are respective advantages to both definitions. With the second
definition we can lean more on
category theory. In particular, some of the technical
results ahead are incarnations of the Yoneda
lemma and other such tools. For the first definition one can
work with the Kan condition more directly, and it is simpler to relate the first definition to the existing
theory of diffeological/Chen spaces, and to the existing
theory of differential forms on simplicial sets.
\end{remark}
\begin{example} [The tautological smooth simplicial set] \label{example:tautology} 
$\D ^{n} _{\bullet}$ has a  
tautological smooth simplicial set structure,
where $$g (\Sigma)  = \Sigma  _{\bullet},
$$ for $\Sigma: \D ^{k} \to \D ^{n}$ a smooth map, hence a $k$-simplex of $\D ^{n} _{\bullet}  $, and where $\Sigma _{\bullet} $ is as in \eqref{eq:induced}.  

\end{example} 
\begin{lemma} \label{lemma:simplicesaresmooth} Let $X$ be a smooth simplicial set and
   $\Sigma: \Delta ^{n} _{simp}  \to X $ an
   $n$-simplex. Let $\Sigma _{*}: \D ^{n}
   _{\bullet} \to X   $ be the corresponding simplicial
   map. Then $\Sigma _{*}  $ is smooth with
   respect to the tautological smooth simplicial
   set structure on $\D ^{n} _{\bullet}  $ as above.
\end{lemma}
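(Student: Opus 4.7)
The plan is to unpack the smoothness condition for $\Sigma_*$ with respect to the tautological structure on $\Delta^n_\bullet$ and observe that it reduces directly to the push-forward functoriality axiom (3) of Definition \ref{def:smoothsimplicialset1} applied to the smooth structure $g$ on $X$. There is essentially nothing to prove beyond matching up notation.

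First I would fix a $k$-simplex $\tau \in \Delta^n_\bullet(k)$, i.e.\ a smooth map $\tau: \Delta^k \to \Delta^n$, regarded via the Yoneda correspondence as a simplicial map $\tau: \Delta^k_{simp} \to \Delta^n_\bullet$. By Example \ref{example:tautology}, the tautological smooth structure on $\Delta^n_\bullet$ sends $\tau$ to the simplicial map $\tau_\bullet: \Delta^k_\bullet \to \Delta^n_\bullet$ of \eqref{eq:induced}. To verify that $\Sigma_*$ is smooth, I must check the coherence condition \eqref{eq:coherence}, namely that for every such $\tau$,
\begin{equation*}
  \bigl(\Sigma_*(\tau)\bigr)_* \;=\; \Sigma_* \circ \tau_\bullet,
\end{equation*}
where the outer $(\cdot)_*$ on the left uses the structure $g$ of $X$, and the subscript on the right is the tautological pushforward just identified.

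Now $\Sigma_*(\tau) \in X(k)$ is a $k$-simplex of $X$, and the identity above is precisely the statement of push-forward functoriality (axiom \eqref{axiom:pushforward}) applied to the $d=n$ case with $\sigma = \tau$. Hence the coherence condition holds tautologically, and $\Sigma_*$ is smooth.

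The only potential subtlety, which I would flag but not belabor, is the consistent identification between a smooth map $\tau: \Delta^k \to \Delta^n$ and the corresponding $k$-simplex of $\Delta^n_\bullet$ (i.e.\ the Yoneda-corresponding simplicial map $\Delta^k_{simp} \to \Delta^n_\bullet$); once this bookkeeping is in place, the lemma is immediate. So no real obstacle is anticipated — this lemma is essentially a sanity check that push-forward functoriality is exactly the right axiom to make induced simplicial maps smooth with respect to the tautological structure.
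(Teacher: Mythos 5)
Your proposal is correct and follows exactly the paper's own argument: the coherence condition \eqref{eq:coherence} for $\Sigma_*$, with the tautological structure $\tau \mapsto \tau_\bullet$ on $\Delta^n_\bullet$, unwinds verbatim to the push-forward functoriality Axiom \ref{axiom:pushforward}. Nothing further is needed.
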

\begin{proof} Let $\sigma$ be a $k$-simplex of $\D
   ^{n} _{\bullet}  $, so $\sigma: \D ^{k} \to \D ^{n}  $ is a smooth map. We need that  
   $$ (\Sigma _{*} (\sigma)) _{*} = \Sigma _{*} \circ 
   \sigma _{*}.    $$
   Now $\sigma _{*} = \sigma _{\bullet},  $ by
   definition of the tautological smooth structure
   on $\Delta^{n} _{\bullet }$. So we have: 
\begin{align*}
   (\Sigma _{*} (\sigma )) _{*} & = \Sigma _{*} 
    \circ \sigma _{\bullet}   \text{ by
   Axiom \ref{axiom:pushforward}} \\
   & = \Sigma _{*} \circ \sigma _{*}.
\end{align*}   
\end{proof}  

The following proposition in particular tells us that
the weak homotopy type of a smooth simplicial set (as
a plain simplicial set) is determined
by its complex of smooth simplices. For contrast, the weak
homotopy type of Chen/Diffeological space is not
generally determined by its complex of smooth simplices (cf.
~\cite[Example 3.12]{cite_homotopytheorydiffeological}).
\begin{proposition} \label{prop:smoothsimplices} 
   The set (just set) of $n$-simplices of a smooth simplicial set $X$ is naturally isomorphic to the set of smooth maps $\D ^{n} _{\bullet} \to X$.
In fact, define $X _{\bullet}$ to be the
simplicial set whose $n$-simplices are smooth
maps $\D ^{n} _{\bullet} \to X $,  and so that if $i: m \to n $  is a
morphism in $\Delta $ then $$X _{\bullet } (i): X
(n) \to X (m) $$   is the ``pull-back'' map: $$X
_{\bullet} (i) (\Sigma) = \Sigma \circ i
_{\bullet },  $$ for $i _{\bullet}: \Delta^{m}
_{\bullet }
\to \Delta^{n} _{\bullet}$  the induced map.
 Then $X _{\bullet}$ is naturally isomorphic to $X$.
\end{proposition}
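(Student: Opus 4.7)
The plan is to construct mutually inverse natural bijections between $X(n)$ and $X_{\bullet}(n)$, and then verify these are compatible with the simplicial structure, so that they assemble into an isomorphism of simplicial sets. In the forward direction, send $\Sigma \in X(n)$ to $\Sigma _{*} = g(\Sigma) : \Delta ^{n} _{\bullet} \to X$. By Lemma \ref{lemma:simplicesaresmooth}, $\Sigma _{*}$ is smooth with respect to the tautological smooth structure on $\Delta ^{n} _{\bullet}$, so it is indeed an element of $X _{\bullet}(n)$. In the backward direction, send a smooth simplicial map $F : \Delta ^{n} _{\bullet} \to X$ to $F(\mathrm{id} _{\Delta ^{n}}) \in X(n)$, using that $\mathrm{id} _{\Delta ^{n}}$ is the tautological top-dimensional simplex of $\Delta ^{n} _{\bullet}$.

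The inverse check goes as follows. Starting from $\Sigma \in X(n)$, property \eqref{property:induced} of Definition \ref{def:smoothsimplicialset1} says $g(\Sigma)| _{\Delta ^{n} _{simp}} = \Sigma$, and evaluating on $\mathrm{id} _{\Delta ^{n}} \in \Delta ^{n} _{simp}(n)$ yields $\Sigma$ itself via the Yoneda identification of $n$-simplices with simplicial maps out of $\Delta ^{n} _{simp}$; hence $\Sigma _{*}(\mathrm{id} _{\Delta ^{n}}) = \Sigma$. In the other direction, for $F$ smooth, the coherence condition \eqref{eq:coherence} gives $(F(\sigma)) _{*} = F \circ \sigma _{*}$ for every simplex $\sigma$ of $\Delta ^{n} _{\bullet}$. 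Taking $\sigma = \mathrm{id} _{\Delta ^{n}}$ and using that on the tautological structure $(\mathrm{id} _{\Delta ^{n}}) _{*} = (\mathrm{id} _{\Delta ^{n}}) _{\bullet} = \mathrm{id} _{\Delta ^{n} _{\bullet}}$, we get $F(\mathrm{id} _{\Delta ^{n}}) _{*} = F$.

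It remains to check naturality in the simplicial index. Given $i : [m] \to [n]$ in $\Delta$, regarded via Notation \ref{notation:1} as a smooth simplicial map $i : \Delta ^{m} \to \Delta ^{n}$, the induced map $i _{\bullet} : \Delta ^{m} _{\bullet} \to \Delta ^{n} _{\bullet}$ gives $X _{\bullet}(i)(F) = F \circ i _{\bullet}$. Applying the push-forward functoriality axiom with $\sigma = i$ yields $(\Sigma _{*}(i)) _{*} = \Sigma _{*} \circ i _{\bullet}$; and by property \eqref{property:induced} together with Yoneda, $\Sigma _{*}(i) = \Sigma(i) = X(i)(\Sigma)$. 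Hence $(X(i)(\Sigma)) _{*} = \Sigma _{*} \circ i _{\bullet} = X _{\bullet}(i)(\Sigma _{*})$, which is precisely the required compatibility. The only real subtlety is the double role of $\mathrm{id} _{\Delta ^{n}}$ as both the tautological smooth $n$-simplex of $\Delta ^{n} _{\bullet}$ and as the Yoneda generator of $\Delta ^{n} _{simp}$; once this is kept straight, the proof is a direct application of the two defining axioms of a smooth simplicial set.
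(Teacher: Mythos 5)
Your proof is correct and follows essentially the same route as the paper's: the forward map is $\Sigma \mapsto \Sigma_{*}$, your backward map (evaluation at $\mathrm{id}_{\Delta^{n}}$) is just the Yoneda translation of the paper's restriction $m \mapsto m|_{\Delta^{n}_{simp}}$, and the key identity $F = (F(\mathrm{id}_{\Delta^{n}}))_{*}$ is obtained by the same computation using smoothness of $F$ and the tautological structure on $\Delta^{n}_{\bullet}$. You additionally verify compatibility with the simplicial structure maps (the second part of the proposition), which the paper explicitly leaves to the reader as an exercise.
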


\begin{proof} Let $\rho: \D ^{n}
   _{simp}  \to X$ be an $n$-simplex. By the lemma above, we have a uniquely associated to it smooth map $\rho
   _{*}: \D ^{n} _{\bullet} \to X  $. Conversely,
   suppose we are given a smooth map  
   $m: \D ^{n} _{\bullet} \to X.$ Then we get an
   $n$-simplex $\rho _{m}:=m| _{\D ^{n}
   _{simp}} $. 
   Let $id ^{n}: \Delta ^{n} \to \D ^{n}$ be the
   identity map.  We have that  
\begin{align*}
     m & = m \circ  id ^{n} _{\bullet} = m \circ
     id ^{n} _{*} \\ 
     & = (m (id ^{n})) _{*}, \text{ as $m$ is smooth } \\
     & = (\rho _{m} (id ^{n})) _{*}, \text{ trivially
     by definition of $\rho _{m}$ }   \\
   & = (\rho _{m}) _{*} \circ id ^{n} _{*}, \text{ as
     $(\rho _{m}) _{*}$ is smooth by Lemma
     \ref{lemma:simplicesaresmooth}}  \\
   & = (\rho _{m}) _{*}.
\end{align*} 
Thus, the map $I _{n}(\rho) = \rho _{*}$, from the set
of $n$-simplices of $X$ to the set of smooth maps $\D ^{n}
_{\bullet} \to X  $, is bijective.  

Given an element  $m \in hom _{\Delta } (n, d)$, 
let $m _{simp}: \Delta^{n} _{simp} \to \Delta^{d} _{simp}$
denote the corresponding natural transformation, also
identified with an element of $\Delta^{d} _{simp} (n)$.
Then the corresponding map $$X (m): X (d) \to X (n)$$ is $$\rho
\mapsto \rho \circ m _{simp},$$ for $\rho: \D ^{n}
   _{simp}  \to X$.

With that in mind, the diagram below commutes
\begin{equation*}
\begin{tikzcd}
X (d) \ar[r, "X(m)"] \ar [d, "I _{d}"] &  X (n) \ar [d,"I
_{n}"] \\
X _{\bullet } (d) \ar [r, "X _{\bullet } (m)"]
& X _{\bullet } (n),
\end{tikzcd}
\end{equation*}
as
\begin{align*}
	X _{\bullet} (m) \circ I _{d} (\rho) & = X _{\bullet } (m)
	(\rho _{*}) \\
	& = \rho _{*} \circ m _{\bullet} 
\end{align*}
while
\begin{align*}
I _{n} \circ X (m) (\rho)	& = (\rho \circ m _{simp}) _{*} \\
& = (\rho _* \circ m _{simp} ) _{*}, \text{ by
\eqref{axiom_gSigma1}} \\
& = (\rho _{*} (m _{simp})) _{*}, \text{ by
\eqref{eq_identification}} \\ 
& =  \rho _{*} \circ m _{\bullet }, \text{ by Axiom \ref{eq_pushforwardfunct}}.  
\end{align*}

Thus $I$  is a natural transformation and is an isomorphism
of simplicial sets $I: X  \to X _{\bullet }$.
\end{proof}
\begin{lemma}
   \label{lemma:inducedsmoothmap} Given a smooth
   $m: \Delta^{d}  _{\bullet} \to \Delta^{n}
   _{\bullet } $   there is a unique smooth map
   $f: \Delta^{d} \to \Delta^{n} $  such that
   $m=f_{\bullet}
    $.  
\end{lemma}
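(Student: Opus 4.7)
The plan is to deduce this lemma directly from Proposition \ref{prop:smoothsimplices} by specializing that statement to the smooth simplicial set $X = \Delta^{n}_{\bullet}$ equipped with its tautological smooth simplicial set structure from Example \ref{example:tautology}.

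First, recall that under the tautological structure on $\Delta^{n}_{\bullet}$, the push-forward of a $d$-simplex $\Sigma: \Delta^{d} \to \Delta^{n}$ (i.e.\ a smooth map) is exactly $\Sigma_{*} = \Sigma_{\bullet}$, the map of simplicial sets from \eqref{eq:induced}. So the assignment $f \mapsto f_{\bullet}$ from smooth maps $\Delta^{d} \to \Delta^{n}$ to smooth maps of smooth simplicial sets $\Delta^{d}_{\bullet} \to \Delta^{n}_{\bullet}$ coincides with the assignment $f \mapsto f_{*}$ constructed in the proof of Proposition \ref{prop:smoothsimplices}.

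Now apply Proposition \ref{prop:smoothsimplices} with $X = \Delta^{n}_{\bullet}$. The set of $d$-simplices of $X$, viewed as a simplicial set, is by definition the set of smooth maps $\Delta^{d} \to \Delta^{n}$; on the other hand, the proposition identifies this set naturally with the set of smooth maps $\Delta^{d}_{\bullet} \to \Delta^{n}_{\bullet}$, via $f \mapsto f_{*} = f_{\bullet}$. Consequently, given any smooth map $m: \Delta^{d}_{\bullet} \to \Delta^{n}_{\bullet}$, there is a unique smooth $f: \Delta^{d} \to \Delta^{n}$ with $m = f_{\bullet}$, exactly as claimed.

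There is essentially no obstacle: all the work is already encoded in Proposition \ref{prop:smoothsimplices} and in the verification (Example \ref{example:tautology}) that the tautological structure on $\Delta^{n}_{\bullet}$ is a smooth simplicial set structure. The only thing to check carefully is the identification $\Sigma_{*} = \Sigma_{\bullet}$ under the tautological structure, which is immediate from the definition. In particular, $f$ can be recovered explicitly as $m|_{\Delta^{d}_{simp}}$ evaluated on the unique nondegenerate top simplex $id^{d} \in \Delta^{d}_{simp}(d)$, i.e.\ $f = m(id^{d})$, and uniqueness follows from the injectivity half of the bijection in Proposition \ref{prop:smoothsimplices}.
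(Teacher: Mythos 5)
Your proposal is correct and is in substance the same as the paper's proof: the paper defines $f := m(id^{d})$ and verifies $f_{\bullet} = m$ via $m \circ id^{d}_{*} = (m(id^{d}))_{*}$ using smoothness of $m$, which is exactly the computation sitting inside Proposition \ref{prop:smoothsimplices} once you specialize to $X = \Delta^{n}_{\bullet}$ with its tautological structure (under which $\Sigma_{*} = \Sigma_{\bullet}$). Your derivation is non-circular, since Proposition \ref{prop:smoothsimplices} is proved before and independently of this lemma, so citing it and reading off both existence and uniqueness from the bijection $f \mapsto f_{*} = f_{\bullet}$ is legitimate.
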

\begin{proof}
   Define $f$ by $m (id) $
   for $id: \Delta ^{d} \to \Delta ^{d}$     the
   identity. Then  
  \begin{align*}
     f _{\bullet}  & = (m (id)) _{\bullet} \\   
     & = (m (id)) _{*} \\
& = m \circ id _{*} \quad (\text{as $m$ is smooth}) \\
    & = m. 
   \end{align*}  
So $f$  induces $m$. Now if $g$ induces $m$  then $g _{\bullet } = m$ 
hence $g= g _{\bullet} (id) = m (id)  $.   
  \end{proof} 

\subsection{Smooth Kan complexes} \label{sec_Smooth Kan complexes}
\begin{definition} A smooth simplicial set whose underlying simplicial set is a Kan complex will be called a \textbf{\emph{smooth Kan complex}}.   
\end{definition}
The above notion will be crucial for us. 
\subsection{Some examples} \label{sec_ExampleKan}
Let $Y$ be a smooth manifold or more generally
a diffeological space and let $Sing^{sm}(Y)$ denote the simplicial set of
smooth singular simplices in $Y$ \footnote{This is often called the ``smooth singular simplicial set of $Y$''. However, for us ``smooth'' is reserved for  another purpose, so to avoid confusion we do not use such
terminology.}. That is $Sing ^{sm} (Y) (k)  $ is the set of smooth maps $\Sigma: \D
^{k} \to Y $, with its natural simplicial structure.
$Sing ^{sm} (Y) $  will often be abbreviated by $Y
_{\bullet}$. 
\begin{example} \label{example:smoothdfold} Let
$Y$ be a smooth manifold, or a diffeological space. Set $X=
Y_{\bullet}$,  then $X$ is naturally a smooth simplicial
set. When $Y$ is a finite dimensional manifold, $X$ is a smooth Kan complex, 
this is essentially
~\cite[Corollary 4.36]{cite_ChristensenWusmoothclassifying}.
Only ``essentially'', as the latter uses ``non-compact
simplices''. We will however not need this and so will not
elaborate.
\end{example} 
\begin{example} \label{example:loopspace} Here is one
special example. Let $M$ be a smooth manifold. Then there is a
natural smooth simplicial  set $LM ^{\Delta}$ whose
$d$-simplices $\Sigma$ are smooth maps $f
_{\Sigma}: \D ^{d} \times S ^{1} \to M$. The
maps $\Sigma _{*} $ are defined by $$\Sigma
_{*} (\sigma) = f _{\Sigma} \circ (\sigma
\times id),  $$ for $\sigma \in \Delta^{d} _{\bullet} (k)$
and $$\sigma \times id: \D ^{k}
\times S ^{1} \to \D ^{d} \times S ^{1},$$
the product map.
This $LM ^{\Delta^{} } $ is one simplicial model of the free loop space.
Naturally, the free loop space $LM$ also has the
structure of a Fr\'echet manifold, in particular we
have the smooth simplicial set $LM _{\bullet} $, whose
$n$-simplices are Gateaux $C ^{\infty}$ maps $\Sigma: \Delta ^{n}
\to LM $, see Hamilton ~\cite{cite_RichardHamiltonNashMoser}.  There is a natural simplicial map $LM ^{\D} \to LM
_{\bullet} $,  which is readily seen to be smooth.   (It is
  indeed a diffeomorphism.) 
	
The above smooth simplicial set structure $LM ^{\Delta^{} }$, in the language of diffeologies, is closely related to the functional diffeology
on $C
^{\infty }(Y , Z)$,  for which there are
diffeological diffeomorphisms:
\begin{equation*}
C ^{\infty }( X \times Y ,Z ) \to  C
^{\infty } (X ,C
^{\infty }(Y , Z)),
\end{equation*}
given another diffeological space $X$.
\end{example}
\subsection{Smooth simplex category of a smooth
simplicial set}
Given a smooth simplicial set $X$, there is an
extension of the previously defined simplex category
$\Delta^{} (X)$.  
\begin{definition}
   \label{def:extendedsimplexcategory} For $X$ a
   smooth simplicial set,
$\Delta ^{sm} (X)$ will denote the category whose
set of objects $\obj \Delta ^{sm} (X) $  is the
   set of smooth maps 
$$\Sigma: \Delta ^{d} _{\bullet} \to X, \quad d \geq
   0  $$ and morphisms $f: \Sigma _{1} \to \Sigma
      _{2}$, commutative diagrams:
\begin{equation} \label{eq:overmorphismssmooth}  
\begin{tikzcd}
   \D ^{d} _{\bullet} \ar[r, "\widetilde{f}
   _{\bullet}  "] \ar [dr,
   "\Sigma _{1}"] &  \D ^{n} _{\bullet} \ar
   [d,"\Sigma _{2}"] \\
    & X 
\end{tikzcd}
\end{equation}
%
with top arrow any smooth map (for the
tautological smooth simplicial set structure on
$\Delta^{d} _{\bullet}$), which we denote
by $\widetilde{f} _{\bullet} $.  By Lemma
\ref{lemma:inducedsmoothmap},  $\widetilde{f}  _{\bullet }$   is
induced by a unique smooth map $\widetilde{f}: \Delta ^{d} \to
\Delta^{n}$.
\end{definition}
By Proposition \ref{prop:smoothsimplices}  
we have
a natural faithful embedding $\Delta (X) \to \Delta
^{sm}(X)  $ that is an isomorphism on object
sets. We call elements of
$\Delta ^{sm} (X) $  $d$-simplices.
\begin{proposition} \label{prop:equivalenceofDefinitions}
   Definitions \ref{def:smoothsimplicialset1},
   \ref{def:smoothsimplicialset2} are equivalent.
\end{proposition}
\begin{proof}
Let $\mathcal{C} _{1}$ denote the category of smooth
simplicial sets as given by the Definition
\ref{def:smoothsimplicialset1}. And let $\mathcal{C}
_{2}$ denote the category of smooth simplicial sets as
given by the Definition \ref{def:smoothsimplicialset2}.

Given $X \in \mathcal{C}_{1}$, we define a functor $I(X):
(\Delta ^{sm}) ^{op} \to Set $  by setting $$I(X) ({k})
= \{\Sigma _{\bullet}: \Delta^{k} _{\bullet } \to X \,|\,
\Sigma _{\bullet } \text{ is smooth i.e. is a morphism in
$\mathcal{C}_{1}$}  \}. $$ And for $\sigma: \Delta^{k}
\to \Delta^{d} $ a smooth map setting $$I(X) (\sigma):
I(X) ({d}) \to I(X) ({k})  $$ to be the map
\begin{equation} \label{equation_defineIXmorphism} I(X)
(\sigma) (\Sigma _{\bullet}) = \Sigma _{\bullet } \circ
\sigma _{\bullet}. \end{equation} This defines $$I:
\mathcal{C}_{1} \to \mathcal{C}_{2}$$  on objects. 

Conversely, given  $F \in \mathcal{C}_{2}$,
define a simplicial set $I ^{-1} (F)$ by the rules: 
\begin{enumerate}
\item $I ^{-1} (F) (k) := F ({k})$.
\item  For
$\Sigma \in I ^{-1} (F) (k) $,  $\Sigma_*:
\Delta^{k}  _{\bullet} \to X$ is the map:
$$\Sigma _{*} (\sigma) = F(\sigma) (\Sigma).
$$
\end{enumerate}
  So that we get an element $I ^{-1} (F) \in \mathcal{C} _{1} $.
 This defines $$I ^{-1}: \mathcal{C} _{2} \to \mathcal{C} _{1}$$ on
 objects. By Proposition \ref{prop:smoothsimplices} $(I
 ^{-1} \circ I (X)) \simeq X$, an isomorphism in
 $\mathcal{C} _{1}$.
			
Suppose now we are given a morphism in $\mathcal{C} _{1}$: $f:
X _{0} \to X _{1} $  i.e. a simplicial map satisfying the condition:
\begin{equation} \label{eq_smoothnessf}
\forall n \in \mathbb{N} \,  \forall \Sigma  \in X
    (n):   (f (\Sigma)) _{*} = f \circ \Sigma _{*}.
\end{equation} 
Define a natural transformation:
\begin{equation*}
I ({f}): I (X _{0}) \to I (X _{1}),
\end{equation*}
by setting $I (f) _{{k} }: I (X _{0}) (k) \to I (X _{1}) (k) $
to be the map $I (f) _{{k}} (\Sigma _{\bullet}) = f \circ
\Sigma _{\bullet }$.

This is a natural transformation by the associativity of the
composition $f \circ (\Sigma _{\bullet } \circ \sigma
_{\bullet }) = (f \circ \Sigma _{\bullet }) \circ \sigma
_{\bullet }$.

It is clear that $I: \mathcal{C}_{1} \to \mathcal{C}_{2}$ is
a functor. We show that it is faithful on hom sets. If $f
_{0}, {f}': X _{0} \to X _{1}$ are a pair of morphisms in
$\mathcal{C}_{1}$ suppose that $I (f) = I (f')$. Then 
\begin{equation*}
\forall n \in \mathbb{N} \,  \forall \Sigma _{\bullet }  \in
I (X) (n):  f \circ \Sigma _{\bullet } = f' \circ \Sigma
_{\bullet }.  
\end{equation*} 
In particular, 
\begin{equation*}
\forall n \in \mathbb{N} \,  \forall \Sigma  \in
X (n):  f \circ \Sigma _{*} = f' \circ \Sigma
_{*},  
\end{equation*} 
as $\Sigma _{*} \in I (X) (n)$.
And so 
\begin{equation*}
\forall n \in \mathbb{N} \,  \forall \Sigma  \in
(X) (n):  f (\Sigma )  = f' (\Sigma ).  
\end{equation*} 
And thus $f=f'$.

We show that $I$ surjective on hom sets. Suppose that $N:
I (X _{0}) \to I (X _{1})$ is a morphism in $\mathcal{C}
_{2}$, i.e. a natural transformation of the corresponding
functors. So for $\sigma:
\Delta^{d} \to  \Delta^{k}$ smooth, we have a commutative diagram:
 \begin{equation} \label{eq_NaturalTransformN}
 \begin{tikzcd}
 I(X _{0}) (k)  \ar[r, "I(X _{0}) (\sigma )"] \ar [d, "N _{k}"]
 &   I(X _{0}) (d) \ar [d,"N _{d}"] \\
 I(X _{1}) (k) \ar [r, "I(X _{1}) (\sigma)"]   &  I(X _{2}) (d)
 \end{tikzcd}
 \end{equation}

Define a simplicial map
\begin{equation*}
f _{N}: X _{0} \to X _{1},
\end{equation*}
by $$f _{N} (\Sigma ) = N _{k} (\Sigma _{*})| _{\Delta^{k}
_{simp}} ,$$ for
$\Sigma \in X _{0} (k)$.

We check that $I (f _{N}) = N$. Let $\Sigma _{\bullet}:
\Delta^{d} _{\bullet } \to X _{0}$ be smooth. For $\sigma:
\Delta^{k} \to  
 \Delta^{d}$ smooth,
we have:
\begin{align*}
	I(f _{N}) _{d}   (\Sigma _{\bullet }) (\sigma)& = (f _{N} \circ
	\Sigma _{\bullet}) (\sigma),  \text{ by definition of $I$}  \\
	& = f _{N} (\Sigma _{\bullet } (\sigma)) \\
	& =N _{k} (\Sigma _{\bullet } (\sigma )_*)| _{\Delta^{k}
	_{simp}}, 
	 \text{ by definition of $f _{N}$} \\
	& = N _{k} (\Sigma _{\bullet } \circ  \sigma _{\bullet
	}) | _{\Delta^{k}
	_{simp}}, \text{ as $\Sigma _{\bullet }$ is smooth} \\
	& = N _{d} (\Sigma _{\bullet}) \circ  \sigma _{\bullet
	}| _{\Delta^{k}
	_{simp}},  \text{ by $N$ being a natural transformation,
	\eqref{eq_NaturalTransformN} and \eqref{equation_defineIXmorphism}} \\
	& =N _{d} (\Sigma _{\bullet }) \circ \sigma, \text{
	notation \ref{notation:1}}  \\
	& = N _{d} (\Sigma _{\bullet }) (\sigma ), \text{
	identification
	\eqref{eq_identification}}. 
\end{align*}
Since $\Sigma _{\bullet }, \sigma $ were general it follows
that $I (f _{N}) = N$.

We have proved that $I$ is a functor that is essentially
surjective on objects, and is fully-faithful on hom sets, it
follows by a classical theorem of category theory that $I$
is an equivalence of categories.
\end{proof}

\subsection{Products} \label{sec:products}
Given a pair of smooth simplicial sets $(X _{1}, g
_{1} ),   (X _{2}, g _{2}) $, the product  $X _{1}
\times X _{2}$ of the underlying simplicial sets,
has the structure of a smooth simplicial set $$(X
_{1} \times X _{2},  g _{1} \times g _{2}), $$
constructed as follows.
Denote by $\pi _{i}: X _{1} \times X _{2} \to X _{i}    $ the simplicial projection maps. 
Then for each $\Sigma \in (X _{1} \times X
_{2}) (d) $, $$(g _{1} \times g _{2}) (\Sigma): \Delta ^{d} _{\bullet} \to X _{1} \times X _{2}  $$ is defined by:
  \begin{equation*}
     (g _{1}  \times g _{2}) (\Sigma) (\sigma): = (g
     _{1} (\pi _{1} (\Sigma)) (\sigma), g _{2}
     (\pi _{2} (\Sigma)) (\sigma)).
  \end{equation*}  
\subsection {More on smooth maps}
\label{sec:moreOnsmoothmaps}
As defined, a smooth map $f: X \to Y$ of smooth simplicial
sets, induces
a functor $$\Delta ^{sm}f: \Delta ^{sm} (X)  \to
\Delta^{sm} (Y). $$     
This is defined by $\Delta ^{sm} f (\Sigma) = f \circ \Sigma $, where $\Sigma:
\Delta^{d} _{\bullet } \to X$ is in $\Delta
^{sm} (X)$. If $m: \Sigma _{1} \to \Sigma
_{2}$ is a morphism in $\Delta^{sm}  (X) $:
\begin{equation*}
\begin{tikzcd}
\Delta^{k} _{\bullet } \ar[r,
   "\widetilde{m} _{\bullet}"] \ar [dr,
   "\Sigma_1"] & \Delta^{d} _{\bullet }  \ar
   [d,"\Sigma _{2}"] \\
   &  X,
\end{tikzcd}
\end{equation*}
then  obviously the diagram below also
commutes:
\begin{equation*} 
\begin{tikzcd}
\Delta^{k} _{\bullet } \ar[r,
   "\widetilde{m} _{\bullet}"] \ar [dr,
   "h_1"] & \Delta^{d} _{\bullet }  \ar
   [d,"h _{2}"] \\
   &  Y,
\end{tikzcd}
\end{equation*}
where $h _{i}= \Delta ^{sm} f (\Sigma _{i}) = f \circ \Sigma
_{i}$,
$i=1,2$.   And so the latter diagram determines a morphism $\Delta
^{sm} f (m):  h _{1} \to h _{2}  $ in $\Delta^{sm}
(Y) $.  Clearly, this
determines a functor $\Delta ^{sm}f$ as needed.

%
%
%
\subsection {Smooth homotopy} 
\begin{definition}\label{def:smoothhomotopy}
   Let $X,Y$ be smooth simplicial sets. Set $I:=
   \Delta ^{1}
   _{\bullet }$ and let $I _{0}, I _{1}
   \subset I$  be the images of the pair of inclusions
   $\Delta^{0} _{\bullet } \to I $ corresponding
   to the pair of endpoints.
   A pair of smooth maps $f,g: X \to Y$   are called
   \textbf{\emph{smoothly homotopic}}  if  there
   exists a smooth map $$H: X \times I \to Y$$  such  that
	 $H| _{X \times I _{0}} = f$  and $H | _{X \times I _{1}
	 } = g $. $H$ will be called a smooth homotopy between $f,g$.

\end{definition} 
Let $X$ be a smooth simplicial set and $x _{0} \in X (0)$.
We say that a smooth $f: \Delta^{n} _{\bullet } \to X$ is
\textbf{\emph{relative to $x _{0}$}}  if $f| _{\partial \Delta^{n} _{\bullet }}$
has image in $x _{0, \bullet }$, where $x _{0, \bullet }$
denotes the image of $\Delta^{0} _{\bullet } \to X$
determined by $x _{0}$, and where $\partial \Delta^{n}
_{\bullet}$ is the sub-simplicial set corresponding to simplices
with image in $\partial \Delta^{n} $. We may analogously
define $F: \Delta^{n} _{\bullet } \times
I \to X$ to be relative to $x _{0}$, if $\partial \Delta ^{n}
_{\bullet } \times I$ has image in $x _{0, \bullet }$. We
call this a \textbf{\emph{relative homotopy}}. 
Then we have:
\begin{definition}\label{def:smoothpik}
Set $\pi ^{sm} _{k}
(X, x _{0}) $ to be the set of equivalence classes of smooth
relative to $x _{0}$ maps $f: \Delta^{k} _{\bullet } \to X $, 
where $f  \sim g$ if there is a smooth relative
homotopy $H:
\Delta^{k} _{\bullet} \times I \to X$, between $f,g$.
\end{definition}
\begin{remark} \label{rem_group} When $X$ is a Kan complex
$\pi ^{sm} _{k} (X, x _{0})$ can be shown to be a group, but
we will not need this.
\end{remark}

\subsection {Geometric realization} Geometric realization of
a smooth simplicial set $X$ is defined to be the geometric
realization of the underlying simplicial set.
\section{Differential forms on smooth simplicial
sets} \label{sec:Differential:forms}
The theory of differential forms on smooth simplicial sets
that we now present
is part of the standard abstract theory of differential forms on
simplicial sets. Some of the results of this are folklore,
for example the de Rham theorem can be credited to Sullivan
~\cite{cite_SullivanInfinitesimalcomputationsintopology},
but many much more detailed, subsequent expositions have been
made,  for example DuPont~\cite{cite_DupontCharClasses}. As such, the theory of differential forms here is a priori \emph{inequivalent}
to the theory of differential forms on diffeological spaces
in the sense of Souriau ~\cite{cite_SouriaGroupesDiff}.
If one wanted to translate our discussion of differential
forms into the language of
diffeological spaces, then probably it would be similar to
the work
of Katsuhiko~\cite{cite_KatsuhikoSimplicialCochainAlgebras},
see also ~\cite{cite_KatsuhikoOther}, ~\cite{cite_NobuyukiMayer-Vietoris}.

First we define smooth differential forms on the
topological simplices $\Delta^{d} $.
\begin{definition}\label{definition_}
Set $T \Delta^{d} := i ^{*} T\mathbb{R} ^{d}  $ for $i:
\Delta^{d} \to \mathbb{R} ^{d} $ the natural inclusion. Let
$T ^{*} \Delta^{d} $ denote the dual vector bundle. A \textbf{\emph{smooth differential $k$-form}}  $\omega$ on $\D ^{d} $ is
a section of $\Lambda ^{k} (T ^{*} \Delta^{d})$,
having a smooth extension to a section of $\Lambda ^{k} (T
^{*} N)$ for $N \supset \Delta^{d} 
$ an open subset of $\mathbb{R} ^{d} $.
\end{definition}
The above is equivalent to various other possible
definitions. For example we may take $\Delta^{d} $ to be
a special case of a smooth manifold with corners, and use a more
general theory of differential forms. This can be done, for
example, using theory of diffeological spaces
~\cite{cite_IglesiasDiffForms}. See also Karshon-Watts
~\cite{cite_KarshonDomainMaps}, which establishes one kind
of ``uniqueness of notions of smooth structures'' for the
case of simplices, so that our chosen model is canonical up
to suitable equivalence.

\begin{definition} \label{def:coherence:differentialform}
Let $X$ be a simplicial set. 
   A \textbf{simplicial differential $k$-form} $\omega$, or
	 just differential $k$-form where there is no possibility
	 of  confusion,  is an assignment for each $d$-simplex
	 $\Sigma$ of $X$ a smooth differential $k$-form $\omega (\Sigma )  = \omega _{\Sigma} $ on $\D ^{d} $, such that 
  \begin{equation} \label{eq:sigmaForm}
  i ^{*}\omega _{\Sigma _{2} }= \omega _{\Sigma _{1} }, 
  \end{equation}
for every morphism $i: \Sigma _{1} \to \Sigma _{2}
   $ in $\D (X)$, (see Section
   \ref{section_simplexcategory}).   
If in addition $X$ is a smooth simplicial set, and if in
addition:
   \begin{equation}
      \label{eq:formcoherence} 
    i ^{*} \omega _{\Sigma _{2}}= \omega _{\Sigma _{1} }, 
\end{equation}
for every morphism $i: \Sigma _{1} \to \Sigma _{2}$ in $\Delta^{sm}
(X)$ then we say that $\omega$ is
\textbf{\emph{coherent}}.      
\end{definition}
\begin{remark} \label{rem_coherence}
In the main applications here coherence will be unnecessary,
and so will not be assumed.
We can sharpen our constructions to get universal 
Chern-Weil forms that are coherent, but this will add much
length to the paper, and is only interesting in more in
depth applications so is postponed. (To get coherence, in
the construction of the universal bundles we must include
connections as part of the data, somewhat akin to what is
done in Freed-Hopkins ~\cite{cite_FreedHopkins}.)
\end{remark}

A simplicial differential form $\omega$ may be denoted
simply as $\omega = \{\omega  _{\Sigma }\}$. It may also be
convenient to use the
anonymous function notation $\Sigma \mapsto \omega _{\Sigma
}$.
%

\begin{example} \label{exampleInducedForm} If $Y$ is
a smooth $d$-fold, and if $\omega$ is a differential $k$-form on $Y$, then $\Sigma \mapsto  \Sigma ^{*} \omega $ is a coherent simplicial differential $k$-form on $Y _{\bullet }$ called the \textbf{\emph{induced simplicial differential form}}. And this determines a $dg$ map:
\begin{equation} \label{equation_G}
\Theta: \Omega ^{\bullet } (Y, \mathbb{R} ^{} ) \to \Omega
^{\bullet } (Y _{\bullet }, \mathbb{R} ^{} ).
\end{equation}
\end{example}
\begin{example} Let $LM ^{\Delta^{} }   $ be the
   smooth Kan complex of Example
   \ref{example:loopspace}. Then Chen's iterated
   integrals \cite{cite_ChenIteratedPathIntegrals} naturally give
   coherent differential forms on $LM ^{\Delta^{}
   }$. More specifically, each $d$-simplex of $LM
	 ^{\Delta^{} }   $ corresponds to a smooth ``plot'' of the
	 form
	 $\Delta^{d} \to LM$ (in Chen's language). Chen's iterated
	 integrals as differential forms on $LM$, amount to a rule
	 in particular giving a differential forms on $\Delta^{d} $, for each
	 such plot. The
	 coherence condition in our language is implied by the condition 
	 ~\cite[Definition 1.2.2]{cite_ChenIteratedPathIntegrals} for this rule. So that this
	 exactly gives a coherent differential form in our
	 language. 
\end{example}

Let $X$ be a simplicial set. We denote by $\Omega ^{k} (X) $ the $\mathbb{R}$-vector space of differential $k$-forms on $X$. 
Define $$d: \Omega ^{k} (X) \to \Omega ^{k+1}
(X)$$ so that $d (\omega)$ abbreviated by   $d \omega $ is:
$$\Sigma \mapsto d\omega _{\Sigma }.
$$  
Clearly we have $$d ^{2}=0. $$ 
A $k$-form $\omega$ is said to be
\textbf{\emph{closed}} if $d \omega=0$, and
\textbf{\emph{exact}} if for some $(k-1)$-form
$\eta$, $\omega = d \eta$. \begin{definition} The \textbf{\emph{wedge
   product}} on $$\Omega ^{\bullet}  (X) =
   \bigoplus _{k \geq 0} \Omega ^{k} (X)   $$ is
   defined by $$\omega \wedge \eta (\Sigma) = \omega
   _{\Sigma} \wedge \eta _{\Sigma}.
   $$ Then $\Omega ^{\bullet}  (X) $  has the
   structure of a differential graded $\mathbb{R}
   ^{}$-algebra with respect to $\wedge$.
\end{definition}

We then, as usual, define the \textbf{\emph{de Rham cohomology}} of $X$:
\begin{equation*}
   H ^{k} _{DR} (X) = \frac{\text{closed k-forms}}{\text{exact k-forms} },
\end{equation*}
then $$H ^{\bullet } _{DR} (X)  = \bigoplus _{k \geq 0}
H ^{k} _{DR} (X)$$ is a graded commutative $\mathbb{R}$-algebra. 

Versions of the simplicial de Rham complex have been used by Whitney and perhaps most famously by Sullivan
~\cite{cite_SullivanInfinitesimalcomputationsintopology}. In
particular, the proof of the de Rham theorem (next section) is due to Sullivan.
\subsection {Homology and cohomology of a 
simplicial set}    
\label{sec:Homology:and:cohomology}  
We go over this mostly to establish notation. 
For a simplicial set $X$, we define an
abelian group $$C _{k} (X, \mathbb{Z}), $$  as the
free abelian group generated by the set of
$k$-simplices $X (k) $. Elements of $C _{k} (X, \mathbb{Z}) $ are called $k$-\textbf{\emph{chains}}.
The boundary operator: 
\begin{equation*}
\partial: C _{k} (X, \mathbb{Z}) \to C _{k-1} (X, \mathbb{Z}),  
\end{equation*}
is defined on a $k$-simplex $\sigma$ by
$$\partial \sigma = \sum _{i=0} ^{n} (-1) ^{i} d _{i}
\sigma,$$ where $d _{i}: X (k) \to X (k-1) $ are the face maps, this is then extended by linearity to general chains.
Then clearly $\partial ^{2}=0 $. 

The homology
of this complex is denoted by $H _{k}
(X,\mathbb{Z}) $, called integral homology. 
The integral cohomology is defined analogously to the
standard topology setting, using dual chain
groups $C ^{k} (X, \mathbb{Z} ) = hom
(C _{k} (X, \mathbb{Z}), \mathbb{Z} )$.
The corresponding coboundary operator
is denoted by $d$ as usual:
\begin{equation*}
   d: C ^{k} (X, \mathbb{Z} )  \to C ^{k+1} (X,
   \mathbb{Z}).
\end{equation*}
Homology and
cohomology with other ring coefficients (or
modules) are defined analogously.  
Given a simplicial map $f: X \to Y$  there are
natural induced chain maps $f ^{*}: C ^{k} (Y, \mathbb{Z})
 \to C ^{k} (X, \mathbb{Z} )  $, and   
$f _{*}: C _{k} (X, \mathbb{Z})
\to C _{k} (X, \mathbb{Z} )  $.

We say that a pair of simplicial maps $f,g: X \to Y$  are 
\textbf{\emph{homotopic}}  if there is
a simplicial map $H: X \times \Delta ^{1} _{simp} \to 
Y$ so that $f= H \circ i _{0}$, $g=H \circ i _{1}$
for $i _{0}, i _{1}: X  \to X
\times \Delta ^{1} _{simp}$  corresponding to the
pair of end point inclusions  $\Delta^{0} _{simp}
\to \Delta^{1} _{simp}$. A \textbf{\emph{simplicial homotopy 
equivalence}}  is then defined analogously to the 
topological setting.

As is well known if $f,g$ are homotopic then $f
^{*}, g ^{*}$   and $f _{*}, g_*$  are chain
homotopic.
\subsection{Integration} Let $X$ be a
simplicial set. Given a chain $$\sigma =
\sum _{i} a _{i} \Sigma _{i} \in C _{k} (X,
\mathbb{Z}) $$ and a smooth differential form
$\omega$, we define:
\begin{equation*}
\int _{\sigma}  \omega= \sum _{i} a _{i} \int _{\D ^{k} } \omega _{\Sigma _{i} } 
 \end{equation*}
where the integrals on the right are the standard
integrals of differential forms. 
Thus, we obtain a homomorphism:
$$\int: \Omega ^{k}  (X)  \to C ^{k} (X,
\mathbb{R}), $$ $\int (\omega) $ is the
$k$-cochain defined by:  $$\int (\omega)  (\sigma) := \int
_{\sigma} \omega, $$ where $\sigma$ is a
$k$-chain. We will abbreviate $\int (\omega)$ by 
$\int \omega$. 
The following is well known.
\begin{lemma} \label{lemma:integration} For a
   simplicial set $X$,  the
   homomorphism $\int$ commutes with $d$, and
   so induces a
   homomorphism:
 \begin{equation*}
DR: \Omega ^{\bullet} (X)  \to C ^{\bullet} (X, \mathbb{R}),
\end{equation*} 
with the induced map on cohomology denoted by the same
symbol.
\end{lemma}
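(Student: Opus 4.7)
The plan is to prove the cochain-level Stokes identity
\begin{equation*}
   \int_{\sigma} d\omega \;=\; \int_{\partial \sigma} \omega,
\end{equation*}
for every smooth $k$-form $\omega$ on $X$ and every $(k+1)$-chain $\sigma \in C_{k+1}(X,\mathbb{Z})$. Granting this, the two sides are precisely $\int(d\omega)(\sigma)$ and $d(\int \omega)(\sigma)$ respectively, so $\int \circ \, d = d \circ \int$, and the induced map on cohomology is well defined.

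By linearity in $\sigma$ the identity reduces to the case of a single $(k+1)$-simplex $\Sigma \colon \Delta^{k+1}_{simp} \to X$. First I would unpack the left side: by definition $\int_{\Sigma} d\omega = \int_{\Delta^{k+1}} d\omega_{\Sigma}$, and classical Stokes' theorem on the manifold-with-corners $\Delta^{k+1}$ gives
\begin{equation*}
   \int_{\Delta^{k+1}} d\omega_{\Sigma} \;=\; \int_{\partial \Delta^{k+1}} \omega_{\Sigma} \;=\; \sum_{i=0}^{k+1} (-1)^{i} \int_{\Delta^{k}} (\delta^{i})^{*} \omega_{\Sigma},
\end{equation*}
where $\delta^{i}\colon \Delta^{k} \to \Delta^{k+1}$ is the $i$-th face inclusion of topological simplices, oriented in the standard way.

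Next I would unpack the right side. Since $\partial \Sigma = \sum_{i=0}^{k+1}(-1)^i d_i \Sigma$, each face $d_i \Sigma$ factors as $\Sigma \circ \delta^i$ viewed as a morphism $d_i\Sigma \to \Sigma$ in $\Delta^{inj}(X)$ (Notation~\ref{notation:1} identifies the simplicial map $\delta^i$ with the corresponding smooth/topological face inclusion). Now invoke the compatibility axiom \eqref{eq:sigmaForm} from Definition~\ref{def:coherence:differentialform}: for every morphism in $\Delta^{inj}(X)$ the form pulls back correctly, so $\omega_{d_i \Sigma} = (\delta^{i})^{*}\omega_{\Sigma}$. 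Therefore
\begin{equation*}
   \int_{\partial \Sigma}\omega \;=\; \sum_{i=0}^{k+1}(-1)^{i}\int_{\Delta^{k}} \omega_{d_i \Sigma} \;=\; \sum_{i=0}^{k+1}(-1)^{i}\int_{\Delta^{k}} (\delta^{i})^{*}\omega_{\Sigma},
\end{equation*}
which agrees with the expression above for $\int_{\Sigma} d\omega$.

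There is essentially no obstacle: the only nontrivial input beyond classical Stokes is axiom \eqref{eq:sigmaForm}, which is precisely what is required to match the restriction of $\omega_\Sigma$ to a boundary face of $\Delta^{k+1}$ with the form $\omega_{d_i \Sigma}$ assigned to the face simplex. Note in particular that the stronger coherence condition \eqref{eq:formcoherence} is not needed here, which is consistent with the author's remark that coherence is not part of the basic definition. Once the cochain-level identity is established, the induced map $H^{k}_{DR}(X) \to H^{k}(X,\mathbb{R})$ follows formally.
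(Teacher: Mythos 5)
Your proof is correct and follows essentially the same route as the paper: reduce to a single simplex, apply classical Stokes on $\Delta^{k+1}$, and match the boundary integral with the simplicial coboundary. The paper's version is more terse and leaves implicit the use of axiom \eqref{eq:sigmaForm} to identify $(\delta^{i})^{*}\omega_{\Sigma}$ with $\omega_{d_i\Sigma}$, a step you rightly make explicit.
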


\begin{proof} We need that 
\begin{equation*}
    \int d \omega = d \int \omega.
 \end{equation*} 
Let $\Sigma: \Delta^{k} _{simp} \to X$ be a
   $k$-simplex. Then 
\begin{align*}
   (\int d \omega) _{\Sigma} & =  \int _{\Delta^{k}
	 } d \omega _{\Sigma},
   \quad \text{ by definition }  \\
   & = \int _{\partial 
   \Delta^{k}} \omega _{\Sigma},
   \quad \text{ by Stokes
   theorem} \\   
   & = d (\int \omega) _{\Sigma},
   \quad \text{ by the
   definition 
   of $d$ on co-chains.} 
\end{align*}
%
\end{proof}
The de Rham theorem  tells us that $DR$ is
a quasi-isomorphism see ~\cite{cite_DupontCharClasses}, but
we will not need this.

\subsection {Pull-back} \label{section_pullback}
Given a (smooth) map $f: X _{1} \to X _{2}  $ of
(smooth) simplicial sets, we define 
\begin{equation*}
f ^{*}: \Omega ^{k} (X _{2}) \to \Omega ^{k} (X _{1} )
\end{equation*}
naturally by 
\begin{equation} \label{equation_pullback}
f ^{*} (\omega) (\Sigma ) :=  \omega (f({\Sigma})).
\end{equation}

Let's check that
$f ^{*}$ commutes with $d$. We have:
\begin{align*}
	\forall \Sigma : f ^{*} (d \omega ) (\Sigma ) & = d \omega
	(f (\Sigma )) \\ 
	& = d (f ^{*} \omega (\Sigma) ) \\
	& =d (f ^{*} \omega) (\Sigma).
\end{align*}

So we have 
an induced differential graded $\mathbb{R} ^{}
$-algebra homomorphism:
\begin{equation*}
  f ^{*}:  \Omega ^{\bullet} (X _{2})  \to \Omega
  ^{\bullet}
  (X _{1}).
\end{equation*}
And in particular an induced $\mathbb{R} ^{}
$-algebra homomorphism:
\begin{equation*}
   f ^{*}:  H ^{\bullet} _{DR} (X _{2})  \to H
   ^{\bullet } _{DR}
  (X _{1}).
\end{equation*}

\subsection{Relation with ordinary homology
and cohomology} 
\label{sec:Relation with ordinary homology cohomology} 
Let $s-Set$ denote the category of simplicial sets
and $Top$ the category of  
topological spaces.
Let $$| \cdot|: s-Set \to Top $$  be the geometric
realization functor as defined in Section
\ref{sub:geometricRealization}.
Let $X$ be a (smooth)  simplicial set.
Then for any ring $K$ and any $d \in \mathbb{N} $ we have natural  chain maps
\begin{equation} \label{eq:chainRmaps} 
\begin{split}
   & CR: C _{d} (X, K)  \to C
    _{d} (|X|, K), \\
   &  CR ^{\vee}: C ^{d} (|X|,K)  \to C
    ^{d} (X, K). 
\end{split}
 \end{equation}
The chain map $CR$ is defined as follows.   
A $d$-simplex $\Sigma: \Delta^{d}  _{simp} \to
X$, by construction of $|X|$ naturally induces  a
continuous map $\Sigma _{top}: \Delta^{d} \to |X| $. Denote
by also by $\Sigma _{top}$ the corresponding generator of $C _{d} 
(|X|,K)$, then we set 
$CR (\Sigma) = \Sigma _{top} $ in this notation.      
Then $CR ^{\vee}$ is the dual chain map.

$CR$ and $CR ^{\vee}$  are quasi-isomorphisms, i.e. induce isomorphisms 
\begin{equation} \label{eq:chainRmapshomology} 
\begin{split}
   & R: H _{d} (X, K)  \to H
    _{d} (|X|, K), \\
   &  R ^{\vee}: H ^{d} (|X|,K)  \to H
    ^{d} (X, K). 
\end{split}
\end{equation}
\begin{remark} \label{rem_}  This works as follows. Let $|X| ^{f}$ denote the
geometric realization of $X$ omitting the degeneracies in
the colimit construction, (that is we take the colimit over
the category of face maps). Then $|X| ^{f}$ is an infinite
dimensional $\Delta^{} $-complex, and as shown by Hatcher
~\cite[Section 2.1]{cite_HatcherAlgebraic}, the
$\Delta$-complex homology of a $\Delta^{} $-complex is 
isomorphic to its singular homology. On the other hand, for
$|X| ^{f}$ the $\Delta^{} $-complex homology is naturally
identified with $H _{d} (X,K)$. Since $|X| ^{f}$ is weakly
equivalent to $|X|$
~\cite[Appendix A]{cite_SegalCategoriesAndCohomology}, this readily implies the claim.
\end{remark}

%
Now let $Y$ be a (finite dimensional, possibly with corners) smooth manifold and $X=Y _{\bullet
 } = Sing ^{sm} (Y) $.  The
natural map 
\begin{equation} \label{equation_h}
h: |Y_{\bullet }| \to Y
\end{equation}
is a weak homotopy equivalence. To see this let $f: S ^{n}
\to Y$ represent a class in $\pi _{n} (Y, y _{0})$, then
there is a smooth $f': S ^{n} \to Y$ representing the same
class. It readily follows that the class $[f']$ is in the
image of $h _{*}: \pi _{k}  (|Y _{\bullet}|, h ^{-1} (y
_{0})) \to \pi _{k}  (Y, y _{0})$. Injectivity of $h _{*}$
is verified similarly.
So $h$ is a homotopy equivalence by the Whitehead's theorem. 

Let us denote by
\begin{equation}
   \label{eq:n}
   N: Y \to |Y_{\bullet }|,  
\end{equation}
a homotopy inverse.
 
Define
\begin{equation*}
I: H _{d} (Y _{\bullet}, K) \to H _{d} (Y,K)
\end{equation*}
to be the map induced by the chain map $$CI: C _{d} (Y
_{\bullet}, K) \to C _{d} (Y,K)$$ sending the
generator of $C _{d} (Y _{\bullet}, K) $,
corresponding to a simplex $\Sigma  \in Y _{\bullet
} (d) $, to the generator of $C _{d} (Y) $, corresponding to
the smooth map $\Sigma _{top}: \Delta^{d} \to Y$  (as
$\Sigma  \in Y _{\bullet
} (d) $ by definition uniquely corresponds to such a
smooth map).

Then factor $R$ as:
\begin{equation*}
\begin{tikzcd}
 H _{d} (Y _{\bullet}, K)
\ar[r, "I"] \ar [dr, "R"] & H _{d}
    (Y,K)   \ar [d,"N _{*}"] \\
    & H_{d} (|Y_{\bullet}|, K).
\end{tikzcd}
\end{equation*}

We may factor $R ^{\vee }$  as: 
\begin{equation} \label{eq:RlowerstarSpaceSmooth2}
\begin{tikzcd}
H ^{d} (|Y _{\bullet}|,K) \ar[r, "N ^{*}"] \ar [dr, "R
^{\vee}"] &  H ^{d} (Y,K)  \ar [d,"I ^{\vee }"] \\
    & H ^{d} (Y _{\bullet}, K),
\end{tikzcd}
\end{equation}
where $I ^{\vee }$ is induced by
the dual $CI ^{\vee}$ of $CI$. 

%

%
%
\begin{notation} \label{notation:geomRealizationClass}  
Let $\alpha \in H ^{d} (X, K) 
$.
\begin{enumerate}
	\item   We set
\begin{equation*}
|\alpha| :=(R ^{\vee}) ^{-1} (\alpha) \in H ^{d} (|X|, 
K). 
\end{equation*}
\item If $Y$	is a smooth manifold,  and $X=Y _{\bullet }$.  We set \begin{equation*}
|\alpha| _{sm}:=  (I ^{\vee}) ^{-1} (\alpha)  \in H ^{d} (Y, K).
\end{equation*}
\label{notation:geomRealizationClasspart2}
\end{enumerate}
 \end{notation}
Given a map of simplicial sets $f: X _{1} \to X
_{2}$  we let $|f|: |X _{1}| \to |X _{2}|$
denote the induced map of geometric realizations.  
\begin{lemma}
   \label{lemma:naturalrealization} Let 
   $f: X _{1} \to  X _{2}$ be a simplicial map of 
   simplicial sets.
  Let $f ^{*}: H ^{d} (X _{2}, {K} ) \to H
   ^{d} (X _{1}, {K} ) $ be the
   induced homomorphism  then:
\begin{equation*}
   |f ^{*} (\alpha)| = |f| ^{*} (|\alpha|).
\end{equation*}
\end{lemma}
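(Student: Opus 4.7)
The plan is to reduce the claimed equality to a naturality statement for the chain-level map $CR$ of \eqref{eq:chainRmaps}, and then verify it on generators using the functoriality of geometric realization.

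First I would unwind the definitions. Since $|\alpha| = (R^{\vee})^{-1}(\alpha)$ and $R^{\vee}$ is an isomorphism, the statement $|f^{*}(\alpha)| = |f|^{*}(|\alpha|)$ is equivalent, after applying $R^{\vee}_{X_{1}}$ to both sides, to the identity
\begin{equation*}
   R^{\vee}_{X_{1}} \circ |f|^{*} = f^{*} \circ R^{\vee}_{X_{2}}
\end{equation*}
as homomorphisms $H^{d}(|X_{2}|, K) \to H^{d}(X_{1}, K)$. Hence it suffices to establish this identity already at the cochain level for $CR^{\vee}$, and by taking duals it is in turn equivalent to
\begin{equation*}
  |f|_{*} \circ CR_{X_{1}}  =  CR_{X_{2}} \circ f_{*}
\end{equation*}
as chain maps $C_{*}(X_{1}, K) \to C_{*}(|X_{2}|, K)$.

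Next I would check this on generators. Let $\Sigma \in X_{1}(d)$ be a $d$-simplex, so that $f_{*}(\Sigma) = f(\Sigma) \in X_{2}(d)$. By definition of $CR$, the left hand side evaluates to $|f| \circ \Sigma_{\mathrm{top}}$, while the right hand side evaluates to $(f(\Sigma))_{\mathrm{top}}$, where $\Sigma_{\mathrm{top}} : \Delta^{d} \to |X_{1}|$ and $(f(\Sigma))_{\mathrm{top}} : \Delta^{d} \to |X_{2}|$ are the characteristic maps of the respective simplices in the colimit presentations of $|X_{1}|$ and $|X_{2}|$. But these two continuous maps coincide, precisely because $|f|$ is the unique continuous map determined by the universal property of the colimit $|X_{1}| = \colim_{\Delta(X_{1})} T$ and the cone over $|X_{2}|$ whose component at $\Sigma$ is $(f(\Sigma))_{\mathrm{top}}$. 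Passing to homology and then to cohomology via duality yields the claim.

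The only step requiring any care is the functoriality identity $|f| \circ \Sigma_{\mathrm{top}} = (f(\Sigma))_{\mathrm{top}}$, and this is really just the statement that geometric realization is a functor $s\text{-}Set \to Top$ built as a colimit over $\Delta(X)$; no deeper input is needed. The whole argument is a bookkeeping exercise once the diagram is drawn, and I expect no genuine obstacle.
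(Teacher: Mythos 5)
Your proposal is correct and follows essentially the same route as the paper: the paper's entire proof is the observation that the chain-level square $|f|_{*}\circ CR = CR\circ f_{*}$ commutes, which you simply verify explicitly on generators via the functoriality of geometric realization before dualizing and passing to cohomology.
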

\begin{proof}
   We have a clearly commutative diagram of chain 
   maps (omitting the coefficient ring):
   \begin{equation*}
   \begin{tikzcd}
   C _{d} (X _{1})  \ar[r, "CR"] \ar 
      [d, "f _{*}"] &  C _{d} (|X _{1}|)  \ar 
      [d,"|f| _{*}"] \\
   C _d (X _{2})  \ar [r, "CR"]   & C _{d} (|X 
      _{2}|),
   \end{tikzcd}
   \end{equation*}
  from which the result immediately  follows.
\end{proof}
The following is immediate from definitions.
\begin{lemma} \label{lemma_Iveetrivial}
If $K=\mathbb{R} ^{} $ then $$I ^{\vee} \circ DR ^{ord} = DR
\circ H\Theta,$$ 
where: 
\begin{itemize}
\item $DR ^{ord}: H ^{d} _{DR} (Y, \mathbb{R} ^{}) \to
H ^{d} (Y, \mathbb{R} ^{} ) $ is the ordinary de Rham
integration isomorphism. 
\item $DR$ is as in Lemma \ref{lemma:integration}.
\item $H \Theta: H ^{d} _{DR} (Y, \mathbb{R} ^{} ) \to H
^{d} _{DR} (Y _{\bullet }, \mathbb{R} ^{} )$ is
the cohomology map induced by the map $\Theta $ as in
\eqref{equation_G}.
\end{itemize}
\end{lemma}



%
\section{Smooth simplicial $G$-bundles}    
\label{sec:Simplicial bundles and connections} 

In what follows $G$ may be assumed to be either a locally
convex Lie group or a diffeological Lie group, with
smoothness interpreted in the corresponding categories, and
where the diffeology on $\Delta^{n} $ is the subspace
diffeology. 

In Section \ref{sec:Chern-Weil} we specialize to $G$ being
a generalized Lie group, for some basics on
the subject of infinite dimensional Lie
groups we refer the reader to Neeb~\cite{cite_Neeb}.
We now introduce the basic building blocks for
simplicial $G$-bundles.   
\begin{definition} \label{remark:concreteness}  Let $P$ be
a topological principal $G$-bundle over $\Delta^{n}$, with
the embedding $\Delta^{n} \subset \mathbb{R} ^{n}$ as
previously. Suppose we have a choice of a maximal atlas of
topological $G$-bundle trivializations $\phi _{i}: U _{i} \times G \to P $, $U _{i}
\subset \Delta^{n} $ open, s.t. the transitions maps
\begin{equation*}
(U _{i} \cap U _{j}) \times G \xrightarrow{\phi _{ij} = \phi
_{j} ^{-1} \circ \phi _{i}, } (U _{i} \cap U _{j}) \times G
\end{equation*}
extend to smooth maps $N \times G \to N \times G$, for $N
\supset U _{i} \cap U _{j} $ some open set in $\mathbb{R} ^{n} $.
Then with such a choice of an atlas we call $P$
\textbf{\emph{a smooth $G$-bundle over $\Delta^{n} $}}.
Smooth bundle maps, and isomorphisms are then defined as with standard smooth bundles. 
\end{definition}


At this point our  
terminology may partially clash with common 
terminology, in particular a simplicial  $G$-bundle
will \emph{not}  be a presheaf on $\D$ with values in the
category of smooth $G$-bundles. Instead, it will
be a functor (not a co-functor!)  on $\Delta ^{sm}
(X) $ with additional
properties. Presheafs of this type will not appear in the paper so that
this should not cause confusion.   

In the definition of simplicial
differential forms we omitted coherence. In the
case of simplicial $G$-bundles, the analogous
condition (full functoriality on $\D ^{sm} (X) $) turns out to be necessary if we want universal simplicial $G$-bundles with
expected behavior.   
\begin{notation}
   \label{notation:cG} Let $G$ be as above, we denote by $\mathcal{G}$ the category of smooth
	 principal $G$-bundles over the simplices $\Delta^{n} $,
	 ($n$ not fixed) with morphisms smooth $G$-bundle maps. 
\end{notation}
Let $\mathcal{F}  _{1}: \Delta^{sm} (X) \to \Delta^{sm} $ be the
natural forgetful functor. And $\mathcal{F} 
_{2}: \mathcal{G} \to \Delta^{sm} $ the 
functor taking a $G$-bundle $P \to \Delta^{k} $ to
${k} $ and defined on morphisms as follows.
If $\widetilde{\phi} : P _{1} \to P _{2} $ is a morphism in
$\mathcal{G} $ over a smooth map $\phi: \Delta^{k} \to
\Delta^{n} $ then $\mathcal{F} _{2} (\widetilde{\phi}) (k,
n) = \phi$.
\begin{definition} \label{def:simplicialGbun} Let 
   $G$ be as above and $X$  a smooth
   simplicial set. A \textbf{\emph{smooth
   simplicial $G$-bundle}} $P$ over $X$ is a 
   functor $P: \Delta ^{sm} (X) \to
   \mathcal{G}  $, so that the diagram:
	 \begin{equation*}
	 \begin{tikzcd}
	 \Delta ^{sm} (X) \ar[r, "P"] \ar [dr, "\mathcal{F}
	 _{1}"] & \mathcal{G}  \ar [d,"\mathcal{F}  _{2}"] \\
	      &  \Delta^{sm},
	 \end{tikzcd}
	 \end{equation*}
commutes. We will call this the \textbf{\emph{compatibility condition}}.
\end{definition}

We will only deal with smooth simplicial
$G$-bundles, and so will usually say
\textbf{\emph{simplicial $G$-bundle}}, omitting
the qualifier `smooth'. 
\begin{notation}
	 \label{notation:} To reduce the use of the parenthesis, we often use notation $P _{\Sigma}$ for $P (\Sigma)$. Note that this notation is used exclusively for objects.  If we write a simplicial $G$-bundle
	 $P \to X$, this means that $P$ is a simplicial
	 $G$-bundle over $X$ in the sense above.  So that $P \to
	 X$ is just notation not a morphism.
\end{notation}

\begin{example} 
\label{example:product} 
If $X$ is a smooth
simplicial set and $G$ is as above, we denote by $X \times
G$  the simplicial $G$-bundle, $$ \forall n \in \mathbb{N},
\forall \Sigma ^{n} \in \Delta (X): (X \times G)_{\Sigma
^{n}} \text{ is the trivial bundle $\Delta^{n}  \times
G \to \Delta^{n} $}. $$ And where for $\sigma: \Delta^{n}
\to \Delta^{k} $, $(X \times G) (\sigma): \Delta^{n}  \times G \to \Delta^{k}  \times G$ is the map $\sigma \times id$.

This is called
the \textbf{\textit{trivial simplicial $G$-bundle over
$X$}}. 
\end{example}   

\begin{example} \label{ex:inducedbundle}
   Let $Z \to Y$ be a diffeological $G$-bundle over a
   diffeological space $Y$.  Or a smooth $G$-bundle over
	 a smooth manifold $Y$ with $G$ locally convex.
	 Then we have a simplicial
   $G$-bundle $Z ^{\Delta} $ over $Y _{\bullet} $
   defined by the conditions: 
\begin{enumerate}
		\item $Z ^{\Delta^{} } _{\Sigma} = \Sigma
   ^{*} Z. $
	 \item  For $f: \Sigma _{1} \to
   \Sigma _{2}$ a morphism,  the bundle map $$Z ^{\D}
   (f): Z ^{\Delta^{}} _{\Sigma _{1}} \to   Z
   ^{\Delta^{}}_{\Sigma _{2}} 
   $$ is the 
	 universal map $u:  \Sigma ^{*} _{1}Z \to \Sigma
	 ^{*}_{2} Z$ corresponding to the universal pull-back
	 property of $\Sigma ^{*} _{2} Z$.
	\end{enumerate} 
	The uniqueness of the universal maps readily implies that
	$Z ^{\Delta^{}}$ is a functor. We say that $Z ^{\Delta} $ is \textbf{\emph{the simplicial $G$-bundle induced by $Z$}}. 
\end{example}

\begin{definition} \label{def:simplicialGbundlemap}
Let $P _{1} \to X _{1} , P _{2} \to X _{2}    $ be
a pair of simplicial $G$-bundles. 
Let $h: X _{1}
   \to X _{2}$ be a smooth map. A
\textbf{\emph{smooth simplicial $G$-bundle map
   over $h$}} from $P _{1}$ to $P _{2}$ is a 
 natural transformation of functors:
\begin{equation*}
   \widetilde{h}: P _{1} \to P _{2} \circ \Delta
   ^{sm} h,
\end{equation*} 
such that the following additional
property is satisfied. For each $d$-simplex $\Sigma \in \Delta^{sm}  (X
   _{1}) $ the natural transformation
   $\widetilde{h} $ specifies a morphism in
   $\mathcal{G}$:
\begin{equation*}
    \widetilde{h} _{\Sigma}: P _{1} (\Sigma) \to P
   _{2} (h \circ \Sigma),  
 \end{equation*}
and we ask that this is a bundle map over the
identity, so that the following diagram commutes:
  \begin{equation*}
   \begin{tikzcd}
  P _{1} (\Sigma)   \ar[r, "\widetilde{h} _{\Sigma} "] \ar [d,
      "p _{1}"] &  P _{2} (h \circ \Sigma)  \ar [d,"p
      _{2}"] \\
   \Delta ^{d} \ar [r, "id"]   & \D
      ^{d}.
   \end{tikzcd}
   \end{equation*}
\end {definition} 
We will usually say  
simplicial $G$-bundle map instead of smooth
simplicial $G$-bundle map, (as everything is
   always smooth)  when $h$ is not
specified it is assumed to be the identity.
\begin{definition}
   Let $P _{1}, P _{2}$ be simplicial $G$-bundles 
   over $X
   _{1}, X _{2}$ respectively. A 
   \textbf{\emph{simplicial
   $G$-bundle isomorphism}} is a simplicial 
   $G$-bundle map
   $$\widetilde{h}:P _{1} \to P _{2}$$  s.t.
   there is a
   simplicial $G$-bundle map $$ \widetilde{h}  
   ^{-1}: P _{2}
   \to P _{1}$$ with $$\widetilde{h}  ^{-1} \circ
   \widetilde{h} = id.$$   
\end{definition}
Usually $X _{1} = X _{2} $ and in this case, unless specified otherwise, it is assumed $h=id$. A simplicial $G$-bundle isomorphic to the trivial simplicial $G$-bundle is called
\textbf{\emph{trivializeable}}.
\begin{definition} \label{def:inducedGbundle}
If $X= Y _{\bullet}$ for $Y$  a smooth manifold,
we say that a simplicial $G$-bundle $P$ over $X$
is \textbf{\emph{inducible by a smooth $G$-bundle}}
$N \to Y$ if there is a simplicial $G$-bundle
isomorphism $N ^{\D} \to P$.
\end{definition}
The following will be one of the crucial ingredients later on.
(Recall also that we are treating two distinct cases
simultaneously: a diffeological $G$ and locally convex $G$,
the proof of the following theorem works the same way in both cases.)
\begin{theorem} \label{lemma:inducedSmoothGbundle}
Let $G$ be as above and let $P \to Y _{\bullet }$ be a simplicial $G$-bundle, for $Y$ a smooth
   $d$-manifold (or a manifold with corners, understood as
	 previously). Then $P$ is inducible by some smooth
	 $G$-bundle $N \to Y $.
\end{theorem}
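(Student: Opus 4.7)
The plan is to reconstruct $N$ by gluing the smooth $G$-bundles $P(\Sigma_y)$ associated to a family of chart-like $d$-simplices covering $Y$, and then to extract a simplicial $G$-bundle isomorphism $N_\bullet \cong P$ from the functoriality built into Definition \ref{def:simplicialGbun}.

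\emph{Local bundles.} For each $y \in Y$, compose a smooth chart at $y$ with a small affine embedding $\Delta^d \hookrightarrow \mathbb{R}^d$ to obtain a smooth simplex $\Sigma_y : \Delta^d \to Y$ that extends to a diffeomorphism of a neighborhood of $\Delta^d$ in $\mathbb{R}^d$ onto an open subset of $Y$; in particular $\Sigma_y|_{\interior \Delta^d}$ is a diffeomorphism onto an open set $U_y \ni y$ and the $U_y$ cover $Y$. Using Remark \ref{remark:concreteness}, $P(\Sigma_y)$ extends to a smooth $G$-bundle over a neighborhood of $\Delta^d$ in $\mathbb{R}^d$, so transporting along the extended $\Sigma_y$ yields a smooth principal $G$-bundle $N_y \to U_y$.

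\emph{Transitions and gluing.} For $z \in U_y \cap U_{y'}$ pick a chart-like simplex $\Sigma_z$ whose image lies in $U_y \cap U_{y'}$. Because $\Sigma_y$ is a diffeomorphism onto $U_y$ on interiors, the composite $\widetilde{f}_y := \Sigma_y^{-1} \circ \Sigma_z : \Delta^d \to \Delta^d$ is smooth as a map of manifolds with corners (it in fact lands in $\interior \Delta^d$) and satisfies $\Sigma_y \circ \widetilde{f}_y = \Sigma_z$, hence defines a morphism $\Sigma_z \to \Sigma_y$ in $\Delta^{sm}(Y_\bullet)$. Applying $P$ produces a bundle map $P(\Sigma_z) \to P(\Sigma_y)$ over $\widetilde{f}_y$, i.e.\ an isomorphism $P(\Sigma_z) \cong \widetilde{f}_y^{\,*} P(\Sigma_y)$; pushing forward along $\Sigma_z$ and using $\widetilde{f}_y \circ \Sigma_z^{-1} = \Sigma_y^{-1}$ gives a canonical identification of the $\Sigma_z$-transport of $P(\Sigma_z)$ with $N_y|_{\Sigma_z(\interior \Delta^d)}$. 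The analogous identification for $y'$ produces a transition $\varphi_{y',y}$ near $z$. Independence of $\varphi_{y',y}$ from the auxiliary $\Sigma_z$, and the cocycle identity $\varphi_{y'',y'} \circ \varphi_{y',y} = \varphi_{y'',y}$ on triple overlaps, both reduce to functoriality of $P$ applied to composites with a common refinement $\Sigma_w$ whose image lies in the relevant overlap. The $(N_y, \varphi_{y',y})$ therefore descend to a smooth principal $G$-bundle $N \to Y$.

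\emph{The simplicial isomorphism and the main obstacle.} To produce $\Theta : N_\bullet \xrightarrow{\cong} P$, note that for any smooth simplex $\Sigma : \Delta^k \to Y$ a sufficiently fine subdivision of $\Delta^k$ into chart-like subsimplices $\tau$ for which $\Sigma \circ \tau$ lands in a single $U_{y(\tau)}$ yields, via the morphism $\Sigma \circ \tau \to \Sigma_{y(\tau)}$ in $\Delta^{sm}(Y_\bullet)$ and functoriality of $P$, canonical isomorphisms $(\Sigma \circ \tau)^* N \cong P(\Sigma \circ \tau) \cong \tau^{*} P(\Sigma)$; the cocycle already established guarantees these glue to an isomorphism $\Sigma^* N \cong P(\Sigma)$, and naturality in morphisms of $\Delta^{sm}(Y_\bullet)$ is automatic from the construction. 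The main obstacle is the geometric bookkeeping in the transition step: arranging that the chart-like simplices can always be chosen so that the composites $\Sigma_\alpha^{-1} \circ \Sigma_\beta$ are smooth as maps of manifolds with corners (so that they yield bona fide morphisms in $\Delta^{sm}(Y_\bullet)$), and verifying the cocycle. Everything else is a purely local consequence of the fact that a simplicial $G$-bundle is functorial on the enriched morphism category $\Delta^{sm}(Y_\bullet)$ rather than being a mere presheaf on $\Delta$.
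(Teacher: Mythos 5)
Your proposal is correct in substance but follows a genuinely different route from the paper. You reconstruct $N$ by the classical recipe: one chart-like embedded $d$-simplex $\Sigma_y$ per point, transport of $P(\Sigma_y)$ along $\Sigma_y|_{\interior\Delta^d}$, and an explicit \v{C}ech cocycle of transitions verified via auxiliary refining simplices and functoriality of $P$ on $\Delta^{sm}(Y_\bullet)$. The paper instead takes a good cover $\{O_i\}$ closed under intersections, exhausts each ball $O_i$ by nested embedded $d$-simplices, defines $P_i$ as a colimit in $\mathcal{G}$ of $P$ restricted to the category $\mathcal{D}(O_i)$ of embeddings, and then sets $N=\colim_{\mathcal{O}}D$; the final isomorphism $P_\Sigma\to N_\Sigma$ is likewise obtained by expressing both sides as colimits over a common diagram of embedded simplices and invoking the universal property. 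Both arguments hinge on exactly the same key point you identify: $P$ is a functor on the enriched category $\Delta^{sm}(Y_\bullet)$, so morphisms $\Sigma_z\to\Sigma_y$ with $\widetilde f=\Sigma_y^{-1}\circ\Sigma_z$ smooth on a corner neighborhood supply the comparison maps. What the colimit formulation buys is that all compatibility (your independence-of-$\Sigma_z$ and cocycle checks, and the agreement of the locally defined isomorphisms $\Sigma^*N\cong P(\Sigma)$) is absorbed into universal properties, and the open nested exhaustions sidestep the one technical wrinkle in your last step: gluing isomorphisms defined on the \emph{closed} pieces of a subdivision of $\Delta^k$ does not automatically yield a smooth isomorphism across the interface faces, so you should either thicken each $\tau$ to an open neighborhood on which $\Sigma\circ\tau$ still lands in $U_{y(\tau)}$, or pass to an open refinement exhausted by simplices as the paper does. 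What your approach buys is concreteness: the transition functions of $N$ are exhibited explicitly, and no auxiliary colimits in the category of $G$-bundles need to be shown to exist and carry smooth structures.
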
 
\begin{proof}  
We need to introduce an auxiliary notion. Let $Z$
be a smooth $d$-manifold with corners, as before understood
as a diffeological space.  And let
$\mathcal{D}  (Z)  $ denote the category whose objects are
smooth (diffeological) embeddings $\Sigma: \Delta^{d} \to Z$, (for the same fixed $d$). 
A morphism $f \in hom _{\mathcal{D} (Z) } (\Sigma _{1}, \Sigma _{2}) $ is a 
commutative diagrams:  
\begin{equation} \label{eq:overmorphisms2}  
\begin{tikzcd} \D ^{d} 
 \ar[r, "\widetilde{f}"] \ar [dr, "\Sigma
   _{1}"] & \D ^{d}  \ar [d,"\Sigma
   _{2}"] \\
 &  Z.
\end{tikzcd} 
\end{equation}
Note that the map $\widetilde{f}$ is unique, when
   such a diagram exists, as $\Sigma _{i}$
   are embeddings. Thus $hom _{\mathcal{D} (Z) } (\Sigma _{1}, \Sigma
      _{2})$ is either empty or consists of a
      single element.

Although, we state the result for manifolds with corners,
for simplicity we assume here that $Y$ is a manifold.
Let $\{O _{i} \} _{i \in I}$ be a locally finite open cover of $Y$, closed under 
intersections, with each $O _{i}$
diffeomorphic to an open ball in $\mathbb{R} ^{d} $. Such a cover is often called a good cover of a manifold. Existence of such a cover is a folklore theorem, but a proof can be found in
~\cite[Prop A1]{cite_GoodCover}. 

Let $\mathcal{O}$ denote the category with the set
of objects $\{O _{i} \} $ and with morphisms inclusions.    
And set $C _{i} = \mathcal{D} (O _{i}) $, then clearly $C
_{i}$ is a full subcategory of  $\D ^{sm} (Y _{\bullet}) $.   For each
$i$, we have the functor $$F _{i} = P|_{C _{i}}: C _{i} \to \mathcal{G}. $$

By assumption that each $O _{i}$ is diffeomorphic
to an open ball,  $O _{i}$ has an exhaustion
by embedded $d$-simplices. This means that there
is a sequence of smooth embeddings $\Sigma
_{j}: \Delta^{d} \to O _{i} $ satisfying:
\begin{itemize}
   \item  $\interior \image
(\Sigma _{j+1}) \supset  \image (\Sigma _{j})
$  for each $j \in \mathbb{N} $.
\item $\bigcup_{j}
\image (\Sigma _{j}) = O _{i} $.
\end{itemize}
As each element of $C _{i}$ is contained in some $\Sigma
_{j}$, $$\Sigma _{0} \to \ldots \to \Sigma _{j} \to \Sigma
_{j+1} \to \ldots $$ forms a final sub-category of $C
_{i}$.
Thus, for each $i$, the colimit in $\mathcal{G}$:   
\begin{equation} \label{eq:colimPi}
      P _{i} := \colim _{C _{i}} F_{i}
\end{equation}
is the colimit of the sequence $$P (\Sigma _{0}) \to \ldots
\to P (\Sigma _{j}) \to P (\Sigma  _{j+1}) \ldots  $$
And this colimit is naturally a topological 
$G$-bundle over $O _{i} = \cup _{i} \image (\Sigma _{i})$. 
      
We may give $P _{i}$ the structure of a smooth $G$-bundle,
with $G$-bundle charts defined as follows. For each $\Sigma
\in C _{i}$, pick a smooth trivialization:
$$\xi _{\Sigma}: (\Delta^{d}) ^{\circ }  \times G \to
(P ^{\circ} _{\Sigma}: = P _{\Sigma}| _{(\Delta^{d})
^{\circ}}).$$

Then set $\phi _{\Sigma,i}$ to be the composition map
$$(\Delta^{d}) ^{\circ } \times G \xrightarrow{\xi _{\Sigma
}} P _{\Sigma} ^{\circ } \xrightarrow{c _{\Sigma}} P _{i}, $$     
where $c _{\Sigma }: (P _{\Sigma } = F _{i} (\Sigma)) \to P _{i}$ is the natural map in the colimit diagram of \eqref{eq:colimPi}.
\begin{lemma} \label{lem_atlas}
The collection $\{\phi 
_{\Sigma,i}\}$ forms a smooth $G$-bundle
atlas for $P _{i}$. 

\end{lemma}
\begin{proof} [Proof]
Suppose that $z
\in (\image \phi _{\Sigma, i}) \cap (\image \phi
_{\Sigma', i}) $. Then there is a morphism
$\Sigma'' \xrightarrow{m} \Sigma $ and a morphism $\Sigma ''
\xrightarrow{m'} \Sigma'$ such that $z \in \image \phi
_{\Sigma'', i}$. And such that
the following
composition maps coincide:
\begin{align*}
& P ^{\circ} _{\Sigma''} \xrightarrow{P (m)} P ^{\circ
} _{\Sigma } \xrightarrow{c _{\Sigma }} P _{i} \\
& P ^{\circ} _{\Sigma''} \xrightarrow{P (m')} P ^{\circ
} _{\Sigma' } \xrightarrow{c _{\Sigma' }} P _{i}.
\end{align*}
Hence, $c _{\Sigma' } ^{-1} \circ c _{\Sigma } = P (m')
\circ P (m) ^{-1} $ where the inverses are defined on the
suitable open sub-domains. Thus $c _{\Sigma' } ^{-1} \circ c _{\Sigma }$ is smooth, which clearly implies our claim.
%
%

\end{proof}

So we obtain a functor $$D: \mathcal{O} \to \mathcal{G},$$  
defined by $$D (O _{i}) = P _{i},$$ and defined
naturally on morphisms.  Specifically, a morphism
$O _{i _{1}} \to O _{i _{2}}$  induces a functor
$C _{i _{1}} \to C _{i _{2}}  $   and hence a
smooth $G$-bundle map
$P _{i _{1}} \to P _{i _{2}} $, by the naturality 
of the colimit.

      Let $t: \mathcal{O} \to Top$ denote the tautological
			functor, sending the subspace $O$ to the
			corresponding topological space, so that  $Y = \colim _{\mathcal{O}} t  $, 
  where for simplicity we write equality for
natural isomorphisms here and further on in
this proof.   

Now,
\begin{equation}
   \label{eq:colimitN}
   N := \colim _{\mathcal{O}} D, 
\end{equation}
is naturally a
topological $G$-bundle $$N \xrightarrow{p} \colim
_{\mathcal{O}} t = Y.$$ Let $c _{i}: P _{i}  \to N$
denote the natural maps in the colimit diagram
of \eqref{eq:colimitN}.
The collection of charts $\{c _{i} \circ \phi  _{\Sigma,i}\}
_{i,\Sigma \in C _{i}}$ forms a smooth $G$-bundle atlas on
$N$ (repeat the argument of Lemma \ref{lem_atlas}). Let us rename these charts as $\{\rho _{k}\}$, for $k$ elements of the index set implicit above.

We now prove that $P$ is induced by $N$.
Let $\Sigma: \Delta^{n}  \to Y$ be smooth, then $\{V _{i} :=  \Sigma
^{-1} (O _{i}) \} _{i \in I}$ is a
locally finite and hence finite open cover of $\Delta^{n}
$ closed under intersections. 
Let $N ^{\D}$ be the simplicial $G$-bundle over $Y _{\bullet }$
induced by $N$. So $$N ^{\D}_{\Sigma} := \Sigma ^{*}N. $$ 


As $\Delta^{n}$ is a convex subset of
$\mathbb{R} ^{n} $, the open metric balls in
$\Delta ^{n}$,  for the
induced metric,
are convex as subsets of $\mathbb{R} ^{n}$.
Consequently, as each $V _{i} \subset \Delta^{n} $
is open, it has a basis of convex metric balls, with respect to
the induced metric.  
By Rudin~\cite{cite_RudinMetricParacompact} there is
then a locally finite cover of $V _{i}$  by
elements of this basis. In fact, Rudin shows
any open cover of $V _{i}$ has
a locally finite refinement  by elements of such a
basis. 

Let $\{W ^{i} _{j}\}$ 
consist of elements of this cover and all 
intersections of its elements, (which must then be
finite intersections).    
So  $W ^{i}_{j} \subset   V _{i} $  are
open convex subsets and $\{W
^{i}_{j}\}$  is a locally finite open cover of $V
_{i}$, closed under finite intersections.

As each $W ^{i}_{j} \subset \Delta^{n} $ is open and convex it has an exhaustion by nested images of embedded simplices. That is 
\begin{equation} \label{equation_WijExhaustion}
W ^{i} _{j}
= \bigcup_{k \in \mathbb{N} } \image \sigma
^{i,j}_{k}  
\end{equation}
for $\sigma ^{i,j} _{k}:
\Delta^{d}  \to W ^{i}_{j}$ smooth and
embedded, with $\image \sigma ^{i,j} _{k}
\subset \image \sigma ^{i,j} _{k+1}$ for each $k$.  


Let $C$ be the small category with objects $I
\times J \times \mathbb{N} $, so that there is
exactly one morphism from $a= (i,j,k) $  to $b=
(i',j',k') $  whenever $\image \sigma ^{i,j} _{k}
\subset \image \sigma ^{i',j'} _{k'},$   and no
morphisms otherwise.
Let $$F: C \to \mathcal{D} (\Delta^{d} ) $$ be the
functor $F (a) =  \sigma ^{i,j}_{k}$ for
$a=(i,j,k) $, (the definition on morphisms is
forced). For brevity, we denote $\sigma _{a} := F (a) $.

For a smooth manifold with corners $Y$, if $\mathcal{O} (Y) $  denotes the category of topological subspaces of $Y$ with morphisms inclusions,  then there is a forgetful functor $$T: \mathcal{D} (Y)  \to \mathcal{O} (Y) $$ which takes $f$ to $\image (\widetilde{f})$. 
With all this in place, we have: 
\begin{lemma} \label{lemma_}
\begin{equation} \label{eq_colimitDd}
   \Delta^{d}  = \colim _{C} T \circ F,
\end{equation}
as a colimit in $Top$.
\end{lemma}
\begin{proof} [Proof]
First recall that 
a general topological space $X$
is the colimit of any open cover $\{O _{i}\} $ of $X$ closed
under intersections. In particular, $\Delta^{d} $ is the
colimit of the cover $\{W ^{i} _{j}\} _{i,j}$. On the other
hand $W ^{i} _{j} = \colim _{S _{i,j}} T \circ F$, for  $S
_{i,j} \subset C$ a full subcategory corresponding to
the exhaustion \ref{equation_WijExhaustion}. Moreover, $C
= \cup _{i,j} S _{i,j}$. The result
readily follows.
\end{proof}

It follows that 
\begin{equation*}
N ^{\D} _{\Sigma} = \colim _{C} N ^{\D} \circ \Delta
^{sm} \Sigma  \circ F. 
\end{equation*}

Now, by construction for each $a \in C$,
$\Sigma \circ  \sigma _{a}$ is contained in an open set 
$O _{i}$ diffeomorphic to the standard open ball in $\mathbb{R} ^{d}
$.
It follows that we may express:
\begin{equation}
   \label{eq:Sigmaacirc}
   \Sigma \circ  \sigma _{a} =
\Sigma _{a} \circ m _{a} \circ  \sigma _{a},
\end{equation}
for some $\Sigma _{a}: \Delta^{d}  \to O _{i} \subset
Y$ a smooth embedded $d$-simplex.  And $m _{a}: \Delta^{n}
\to \Delta^{d}  $ smooth. 

So for all $a \in C$, $$N ^{\D} \circ \Delta ^{sm} \Sigma  \circ F (a)
= (m _{a} \circ  \sigma _{a}) ^{*} P _{\Sigma_a},$$
after naturally identifying $P _{\Sigma _{a} }$ with $N
^{\Delta^{} } _{\Sigma _{a}}$.
More precisely, there is a natural isomorphism 
$\phi _{a}: P 
_{\Sigma _{a} } \to N ^{\D} _{\Sigma _{a}}$ given by the
composition:
\begin{equation} \label{eq:composition}
   P _{\Sigma _{a}} \to P _{i} \to N,
\end{equation}
with the first map the bundle map in the colimit diagram
of \eqref{eq:colimPi}, and the second map the
bundle map in the colimit diagram of
\eqref{eq:colimitN}.  The composition 
\eqref{eq:composition} gives a bundle map over 
$\Sigma _{a}$. And so, by the
defining universal property of the pull-back, there
is a uniquely induced universal map  $$P _{\Sigma _{a}} \to  (\Sigma _{a}) ^{*} N = N ^{\D} _{\Sigma _{a}}, $$    which is a $G$-bundle isomorphism. 

%

Also,
\begin{equation*}
P _{\Sigma } = \colim _{C} P \circ \Delta
^{sm} \Sigma  \circ F.
\end{equation*}
Similarly to the above discussion we have that for all $a \in C$:
\begin{equation*}
P \circ \Delta ^{sm} \Sigma  \circ F (a) = P _{\Sigma _{a}
\circ m _{a} \circ \sigma _{a}},
\end{equation*}
and by functoriality of $P$ there is a morphism:
\begin{equation*}
P (m _{a} \circ \sigma _{a}): P _{\Sigma _{a} \circ m _{a}
\circ \sigma _{a}} \to P _{\Sigma _{a}},
\end{equation*}
over $m _{a} \circ \sigma  _{a}$ and hence an induced
natural morphism:
\begin{equation*}
P _{\Sigma _{a} \circ m _{a} \circ \sigma _{a}} \to (m _{a}
\circ \sigma _{a}) ^{*} P _{\Sigma _{a}},
\end{equation*}
which is also a $G$-bundle isomorphism.

To summarize, we obtain for all $a \in C$ a natural isomorphism 
\begin{equation*}
N \circ \Delta ^{sm} \Sigma  \circ F (a) \xrightarrow{\phi
_{a}} P \circ \Delta ^{sm} \Sigma  \circ F (a).
\end{equation*}
These fit into a natural transformation of functors:
$$\phi: N \circ \Delta ^{sm} \Sigma  \circ F 
\to P \circ \Delta ^{sm} \Sigma  \circ F.$$ So that $\phi$ induces a map
of the colimits: 
\begin{equation*}
 h _{\Sigma}:  P _{\Sigma} \to  N ^{\Delta}_{\Sigma},
\end{equation*}
by naturality,
and this is an isomorphism of these smooth
$G$-bundles.
It is then clear that $\{h _{\Sigma}\} _{\Sigma}$ determines
the bundle isomorphism $h: P \to N ^{\Delta} $  we are
looking for.
%
\end{proof}
\subsection {Pullbacks of simplicial bundles}
\label{sec:pullbacksimplicialbundle}
Let $P \to X$  be a simplicial $G$-bundle over a
smooth simplicial set $X$. And let $f: Y \to X$
be a smooth map of smooth simplicial sets.
We define the pull-back simplicial $G$-bundle $f
^{*}P \to Y$ to be the functor $f ^{*}P := P \circ
\Delta ^{sm}f$.

Note that the analogue of the following lemma
is not true in the category of topological
fibrations. The pull-back by the composition
is not the composition of pullbacks (except
up to
a natural isomorphism).
\begin{lemma}
   \label{lemma:pullbackisfunctorial}
The pull-back is functorial. So that if $f: X
   \to Y$  and $g: Y \to Z$  are smooth maps of
   smooth simplicial sets, and $P \to Z$ is a
   smooth simplicial $G$-bundle over $Z$   then
   $$(g \circ f) ^{*}P = f ^{*} (g ^{*} (P)) \text{ this is an actual equality.} $$ 
\end{lemma}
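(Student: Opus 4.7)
The plan is to unfold both sides of the claimed identity using the definition $f^{*}P := P \circ \Delta^{sm}f$. On the left, $(g \circ f)^{*}P = P \circ \Delta^{sm}(g \circ f)$; on the right, $f^{*}(g^{*}P) = (g^{*}P) \circ \Delta^{sm}f = P \circ \Delta^{sm}g \circ \Delta^{sm}f$. So the entire content of the lemma reduces to the strict functoriality
\begin{equation*}
\Delta^{sm}(g \circ f) = \Delta^{sm} g \circ \Delta^{sm} f
\end{equation*}
of the assignment $h \mapsto \Delta^{sm} h$ described in Section \ref{sec:moreOnsmoothmaps}. I would verify this as an equality of functors $\Delta^{sm}(X) \to \Delta^{sm}(Z)$.

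First, on objects. Given $\Sigma: \Delta^{d}_{\bullet} \to X$ in $\Delta^{sm}(X)$, the definition of $\Delta^{sm} h$ on objects is plain postcomposition, so by associativity of composition of simplicial maps,
\begin{equation*}
\Delta^{sm}(g \circ f)(\Sigma) = (g \circ f) \circ \Sigma = g \circ (f \circ \Sigma) = \Delta^{sm} g\bigl(\Delta^{sm} f(\Sigma)\bigr).
\end{equation*}

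Next, on morphisms. A morphism $m: \Sigma_{1} \to \Sigma_{2}$ in $\Delta^{sm}(X)$ is specified by the triple consisting of its source, its target, and its top arrow $\widetilde{m}_{\bullet}: \Delta^{k}_{\bullet} \to \Delta^{d}_{\bullet}$. By the construction in Section \ref{sec:moreOnsmoothmaps}, $\Delta^{sm} h(m)$ is the morphism with the same top arrow $\widetilde{m}_{\bullet}$ and with source and target obtained by postcomposing $\Sigma_{1}, \Sigma_{2}$ with $h$. Therefore both $\Delta^{sm}(g \circ f)(m)$ and $(\Delta^{sm} g \circ \Delta^{sm} f)(m)$ have the top arrow $\widetilde{m}_{\bullet}$ and respective source and target $(g \circ f) \circ \Sigma_{1}$ and $(g \circ f) \circ \Sigma_{2}$ in $\Delta^{sm}(Z)$, which by the object calculation coincide. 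Hence they are the same morphism.

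Combining the two, $\Delta^{sm}(g \circ f)$ and $\Delta^{sm} g \circ \Delta^{sm} f$ are strictly equal as functors, and precomposing with $P$ yields the desired strict identity $(g \circ f)^{*}P = f^{*}(g^{*}P)$. I do not anticipate any obstacle: the warning in the paper about set fibrations applies because there pull-back is defined through a universal property and is only canonical up to isomorphism, whereas the present definition of $f^{*}P$ bypasses universal properties entirely by reparameterizing the indexing category, which is exactly what allows strict equality rather than mere natural isomorphism.
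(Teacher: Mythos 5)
Your proof is correct and follows essentially the same route as the paper: unfold $f^{*}P = P \circ \Delta^{sm}f$ and appeal to associativity of functor composition together with the strict identity $\Delta^{sm}(g\circ f) = \Delta^{sm}g \circ \Delta^{sm}f$, which the paper treats as immediate and you verify explicitly on objects and morphisms. Your closing remark about why strict equality holds here but fails for pull-backs defined via universal properties is also exactly the point of the paper's preceding caveat.
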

\begin{proof}
  This is of course elementary, as functor
   composition is associative: $$(g \circ f) ^{*}P =
   P \circ \Delta ^{sm}(g \circ f) = P \circ
   (\Delta ^{sm}g \circ \Delta ^{sm}f) = (P \circ
   \Delta ^{sm} g) \circ \Delta ^{sm}f = f
   ^{*} (g ^{*} P). $$ 
   \end{proof}

\section{Connections on simplicial $G$-bundles} \label{sec:Connections}
In this section $G$ is a generalized Lie group.
A $G$-connection on a smooth $G$-bundle $P$ over a finite
dimensional smooth manifold $X$ is an Ehresmann $G$-connection,
that is a smooth, right
$G$-invariant horizontal distribution. Existence of such
connections is proved as in the case of finite dimensional
bundles. One notes that in trivializations connections form
an affine space, and then uses partitions of unity over the base,
\footnote{This is where we used that $X$ is finite
dimensional, this can likely be relaxed.}
see for instance ~\cite{cite_completeconnectionsonfiberbundles}.

In our setting, we only need to treat the case of $G$-bundles 
$P$ over a simplex $\Delta^{n} $. As such a $G$-bundle is
trivializable, the space of $G$-connections on $P$ is in
correspondence with the space of
Lie algebra valued 1-forms. The regularity condition also ensures that there is
a good theory of parallel transport, see Section
\ref{sec_Curvature 2-form}. Thus, we can completely avoid the
generalities. 
\begin{definition}  A \textbf{\emph{simplicial $G$-connection}} $D$ on a simplicial $G$-bundle $P$ over a smooth simplicial set $X$ is for each $d$-simplex $\Sigma$ of $X$, 
a smooth $G$-invariant Ehresmann $G$-connection $D
(\Sigma ) = D _{\Sigma} $ on $P _{\Sigma} $. 
%
%
%
This data is required to satisfy: if $f: \Sigma _{1} \to \Sigma _{2}  $ is a morphism in $\D (X)$  then  
\begin{equation} \label{equation_functorialityDefConnection}
P (f) ^{*} D _{\Sigma _{2} } = D
   _{\Sigma _{1} }.    
\end{equation}
We say that $D$ is \textbf{\emph{coherent}} if the
   same holds for
all morphisms $f: \Sigma _{1} \to \Sigma _{2}$ in
   $\D ^{sm} (X) $.  
We will often say $G$-connection instead of
simplicial $G$-connection, where there is no need to disambiguate.
\end{definition}
As with differential forms the coherence condition
is very restrictive, and is not part of the basic
definition. 

Let $P \to X$ be a simplicial $G$-bundle. Define $P \times
I \to X \times I$, for $I := [0,1] _{\bullet} $, 
to be the simplicial $G$-bundle $pr ^{*} P $,  for $pr:
X \times I \to X$ the natural projection.
\begin{lemma} \label{lemma:connections:exist}
$G$-connections on simplicial $G$-bundles exist
and any pair of $G$-connections $D _{1}, D _{2}  $
on a simplicial $G$-bundle $P$ are
\textbf{\emph{concordant}}. The latter means that there is
a $G$-connection on $\widetilde{D} $ on $P\times I \to
X \times I$, which restricts to $D _{1}, D _{2}  $ on $P \times I _{0} $, respectively on $P \times I _{1} $,  for $I _{0}, I _{1}  \subset I$ denoting the images of the two end point inclusions $\Delta ^{0} _{\bullet} \to I$.
\end{lemma}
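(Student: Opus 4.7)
The plan is to handle existence by induction on the non-degenerate skeleta of $X$, and concordance by a pointwise convex combination of $D_1$ and $D_2$ with weight depending on the $I$-coordinate.

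For existence, I would define $D_\tau$ on non-degenerate simplices $\tau: \Delta^n_{simp} \to X$ inductively in $n$, then propagate to arbitrary simplices using the Eilenberg--Zilber factorization. Over a point there is a unique Ehresmann connection, providing the base case. Inductively, assume connections have been constructed for all simplices of dimension $<n$ compatibly with every morphism of $\Delta(X)$ among these. For a non-degenerate $n$-simplex $\tau$, each face $d_i \tau$ has a pre-assigned connection: either $d_i\tau$ is itself non-degenerate of dimension $n-1$, or its Eilenberg--Zilber essence is of lower dimension and $D_{d_i\tau}$ was already defined by pullback. The inductive compatibility forces the connections on $d_i\tau$ and $d_j\tau$ to agree on the common $(n-2)$-face $d_i d_j \tau$, so they assemble into a well-defined connection on $P_\tau$ restricted to $\partial\Delta^n$. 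Since smooth $G$-invariant Ehresmann connections on $P_\tau$ form an affine space (modeled on $1$-forms on $\Delta^n$ with values in the adjoint bundle), and $\partial \Delta^n \subset \Delta^n$ admits a collar neighborhood, a partition-of-unity extension produces a $G$-invariant Ehresmann connection $D_\tau$ on $P_\tau$ realizing the prescribed boundary data. For a degenerate $\Sigma$ with unique Eilenberg--Zilber factorization $\Sigma = \tau \circ s$, set $D_\Sigma := P(s)^* D_\tau$. The compatibility $P(f)^* D_{\Sigma_2} = D_{\Sigma_1}$ for an arbitrary morphism $f$ of $\Delta(X)$ then follows by factoring $\widetilde{f}$ into its face and degeneracy parts: face compatibility is the boundary condition built into the inductive step, while degeneracy compatibility is immediate from the pullback definition and functoriality of $P$.

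For concordance, let $I := \Delta^1_\bullet$ and form the simplicial $G$-bundle $P \times I := \pi_X^* P$ over $X \times I$, where $\pi_X: X \times I \to X$ is the projection. A $d$-simplex of $X \times I$ is a pair $(\Sigma_X, \Sigma_I)$ in which $\Sigma_I: \Delta^d \to [0,1]$ is a smooth map. Define
\begin{equation*}
\widetilde{D}_{(\Sigma_X, \Sigma_I)} := (1 - \Sigma_I) \cdot D_1|_{\Sigma_X} + \Sigma_I \cdot D_2|_{\Sigma_X}
\end{equation*}
as a pointwise convex combination on $\Delta^d$ of the two affine connections on $P_{\Sigma_X}$. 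This is again a smooth $G$-invariant Ehresmann connection. Compatibility for morphisms in $\Delta(X \times I)$ is immediate because pullback commutes with such weighted convex combinations when the weight is itself pulled back, and $D_1, D_2$ are by hypothesis connections on $P$. Restriction to $X \times I_0$ (resp.\ $X \times I_1$) recovers $D_1$ (resp.\ $D_2$) since $\Sigma_I \equiv 0$ (resp.\ $\Sigma_I \equiv 1$) on the relevant sub-simplicial set.

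The main obstacle is the bookkeeping in the inductive existence argument, in particular the verification that the boundary connection on $\partial \Delta^n$ is well-defined when some faces of $\tau$ are degenerate --- this is where the Eilenberg--Zilber decomposition and the consistency of the pullback definition do the real work. The interior extension and the concordance statement, by contrast, are essentially immediate consequences of the affine structure on the space of connections.
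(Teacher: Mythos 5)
Your existence argument is essentially the paper's: both proceed by skeletal induction, fixing connections on (degeneracies of) $0$-simplices first, observing that for a non-degenerate $n$-simplex the boundary data is already determined and consistent, extending over the interior, and then propagating to degenerate simplices by pulling back along the degeneracy projection (your Eilenberg--Zilber phrasing is just a more explicit version of the paper's ``a simplex is either non-degenerate or a degeneracy of an $n$-simplex''). You are in fact somewhat more careful than the paper about why the face connections glue on $\partial\Delta^n$ and why the interior extension exists (affine structure, collar, partition of unity), where the paper simply says ``extend arbitrarily.'' Where you genuinely diverge is the concordance statement: the paper handles it by re-running the same inductive extension procedure on $P\times I$ with $D_1, D_2$ as prescribed boundary data, whereas you give the closed-form convex combination $(1-\Sigma_I)\,(D_1)_{\Sigma_X} + \Sigma_I\,(D_2)_{\Sigma_X}$ with weight pulled back from the $I$-coordinate. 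Your formula is correct --- principal connections form an affine space, the weight is basic so equivariance and the fundamental-vector-field condition are preserved, and compatibility with morphisms of $\Delta(X\times I)$ holds because $\Sigma_I$ transforms by precomposition with $\widetilde{f}$ --- and it buys you an explicit, induction-free verification of the concordance, at the cost of not illustrating that the inductive machinery also solves relative extension problems (which is how the paper frames it).
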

\begin{proof} Suppose that $\Sigma: \Delta^{d}
   _{simp} \to X$ is a degeneracy of a 0-simplex
   $\Sigma _{0}: \Delta^{0} _{simp} \to X$,
   meaning that there is a morphism from $\Sigma$ to
$\Sigma _{0}$ in $\Delta (X) $.      Then $P
   _{\Sigma}= \Delta^{d} \times P _{\Sigma _{0}}$
   (as previously equality indicates natural
   isomorphism) 
   and we fix the corresponding trivial connection
   $D _{\Sigma}$ on $P _{\Sigma }$. This assignment satisfies the
	 condition that for all morphisms $m: \Sigma _{1} \to
	 \Sigma _{2}$ in $\Delta{(X)}
	 $, for $\Sigma _{1}, \Sigma _{2}$ degeneracies of
	 $0$-simplices, $D _{\Sigma _{1}} = P (m) ^{*} D _{\Sigma
	 _{2}}$.
	 We then proceed inductively.


Suppose we have constructed connections $D _{\Sigma }$ for
all $k$-simplices, $0 \leq k \leq n$, and all their
degeneracies, satisfying
the condition $S (n)$, which is as follows. For all morphisms $m: \Sigma _{1} \to
\Sigma _{2}$ in $\Delta(X) $, for $\Sigma _{1}, \Sigma
_{2}$ $k$-simplices or their degeneracies with $0 \leq k \leq n$, $D _{\Sigma _{1}}
= P (m) ^{*} D _{\Sigma _{2}}$. We construct an extension $D
_{\Sigma }$ for all $(n+1)$-simplices and their
degeneracies, so that this extension satisfies $S (n+1)$.

If $\Sigma$ is a non-degenerate
$(n+1)$-simplex then $D _{\Sigma}$ is already
determined over the boundary of $\Delta^{n+1}
$ by the defining condition
\eqref{equation_functorialityDefConnection}. For by the
hypothesis, $D _{\Sigma}$ is already
defined on all $n$-simplices. Then extend $D _{\Sigma}$ over
all of $\Delta^{n+1} $ arbitrarily (since the corresponding
bundle is trivializable this amounts to choosing a smooth extension of
a Lie algebra valued 1-form). Given a non-identity morphism of
non-degenerate $k$-simplices $m: \Sigma _{0} \to \Sigma $,
$0 \leq k \leq n+1$, $\degree \Sigma _{0} < n+1$ and hence $m$
maps to the boundary of $\Sigma $, i.e. to a subsimplex of
degree $n$ or less and hence by the inductive
hypothesis we have that $P (m) ^{*} D _{\Sigma
} = D _{\Sigma }$.

Thus, we have extended $D _{\Sigma}$  to all   
$(n+1)$-simplices, as such a simplex is either
non-degenerate or is a degeneracy of an
$n$-simplex, and in the latter case $D _{\Sigma }$  is
defined by the hypothesis. 

Now, suppose we have a degeneracy $mor: \Sigma ^{m} \to \Sigma
^{k}$, $k < m$, $k \leq n+1$ ($\Sigma ^{k}$ can itself be degenerate). Then we have bundle map:
\begin{equation*}
P (mor): P (\Sigma ^{m}) \to P (\Sigma ^{k}).
\end{equation*}
And we define $D _{\Sigma ^{m}} = P (mor) ^{*} D _{\Sigma ^{k}}$. 
The property $S (n)$ ensures that this is well defined.
And so we have constructed an
assignment $D _{\Sigma }$ for all degeneracies of
$(n+1)$-simplices. By construction this satisfies $S (n+1)$.
And so we have completed the inductive step.

The second part of the lemma follows by an
analogous argument, since we may extend $D
_{1}, D _{2}  $ to a concordance connection
$\widetilde{D}$, using the above inductive
procedure.


\end{proof}
\begin{example} \label{example:pullbackconnection}
Given a smooth $G$-connection $D$ on a
smooth principal $G$-bundle $Z \to Y$, we naturally get a
simplicial $G$-connection on the induced simplicial
$G$-bundle $ Z ^{\D}$. Concretely, this is
defined by setting $D
_{{\Sigma}}$  on $ Z ^{\D} _{\Sigma} = \Sigma ^{*} Z $
to be $\widetilde{\Sigma}^{*}D$, for
$\widetilde{\Sigma}: \Sigma ^{*} Z \to Z $  the
natural map (in the pull-back diagram). The pull-back
$\widetilde{\Sigma }^{*}D$, is the pre-image by
$ \widetilde{\Sigma}$ of the corresponding distribution.
This is called the
\textbf{\emph{induced simplicial connection}},
and it will be denoted by $D ^{\D}$. 
Going in the other direction is always possible if
   the given simplicial $G$-connection in addition satisfies
   coherence,  but we will not elaborate.
\end{example}

\section {Chern-Weil homomorphism}
\label{sec:Chern-Weil} 
\subsection {The classical case}
\label{sec:classicalChernWeil}
To establish notation we first
discuss the standard Chern-Weil homomorphism. In this section
again $G$ will be a generalized Lie group.
Let $\mathfrak g $ denote its
Lie algebra. Let $P$ be a $G$-bundle over
a smooth finite dimensional manifold $Y$.
Fix a $G$-connection $D$ on $P$.
\subsubsection{Curvature 2-form} \label{sec_Curvature 2-form}
Associated to $D$ we have the
curvature 2-form $R ^{D}$ on $Y$,
understood as a 2-form valued in the vector bundle
$\mathcal{P} \to Y $, whose fiber over $y \in Y$ is   
$\mathcal{R} (P _{y})$ - the Lie algebra of right $G$-invariant
vector fields on $P _{y}$. 
In the infinite dimensional setting the definition of this
2-form is more subtle. 
\begin{remark} \label{rem_curvatureform}
No general reference is
known to me. But it might be possible to adapt the finite
dimensional approach of Kobayashi-Nomizu
~\cite{cite_bookKobayashiNomizuVol1},
\cite{cite_bookKobayashiNomizuVol2},
to the locally convex infinite dimensional setting. The
potential difficulty may be in differential geometric
details like the Bianchi identity in infinite dimensions.
The approach below relies on regularity, but on the other
hand it is intuitive, and there is no differential geometry
just calculus. It is also, at least implicitly, the approach
one takes in symplectic geometry for curvature of Hamiltonian fibrations,
see
~\cite[Section
6.4]{cite_McDuffSalamonIntroductiontosymplectictopology}. 
%
%
\end{remark}

Suppose first we
have a trivial bundle $(U \subset \mathbb{R} ^{d} ) \times
G$, with $0 \in U$, $U$ open. Define a smooth $\mathfrak g$ valued one form
$\alpha ^{D}$ on $U$ by:
\begin{equation*}
\alpha ^{D} (v) = pr _{G} (\widetilde{v}),
\end{equation*}
where $\widetilde{v}$ is the $D$-horizontal lift of $v$, 
$pr _{G}: (U \subset \mathbb{R} ^{d}) \times G \to G  $ is
the projection, and where we identify $\mathfrak g$ with the
space of right invariant vector fields.

Then for any smooth path $\gamma: [0,1] \to U$, we get
a smooth path in $\mathfrak g$, $$t \mapsto \xi _{t} = \alpha
^{D} (\gamma' (t)).$$ 

We use the defining property of the regularity of $G$ to find the unique smooth solution
curve $\widetilde{\gamma}: [0,1] \to G$ satisfying:
\begin{equation*}
\widetilde{\gamma}' (t)  = \xi _{t} (\gamma
_{t}).
\end{equation*}
Set $\phi ^{D} ({\gamma}) = \widetilde{\gamma} (1)$. 

This determines a map, called the holonomy map, on the
smooth based loop space:
\begin{align*}
& Hol ^{D}:  \Omega _{p} U \to G, \\
& Hol ^{D} (\gamma) = \phi ^{D} ({\gamma}).
\end{align*}
The regularity of $G$ gives that $Hol ^{D}$ is smooth,
taking the standard Fr\'echet manifold structure on $\Omega _{p} U$.

Now, for $v, w \in T _{0} U $ and $h,k \in [0,1) $ let $\gamma
_{hv, hw} \in \Omega _{0} U$, parametrize the oriented boundary of the
parallelogram,
three of whose vertices are $0, hv, hw$, where the
orientation on the parallelogram is $\{v,w\}$.  (We need to perturb
the natural piecewise linear parametrization 
to be smooth, with the same image, basically making it constant near corners of the parallelogram.)


Then the mapping $(h,k)
\mapsto Hol ^{D} (\gamma _{hv, hw}) \in G$ is smooth (it is
well defined with respect to any choice of the smoothing
mentioned above), and of
course $(0,0) \mapsto e$ (the unit).
And so we may set:   
\begin{equation} \label{def_RD}
 \forall v,w \in T _{0} U: R ^{D} _{0}
 (v,w) = \frac{\partial }{\partial h \partial k} \vert 
 _{(0,0)} Hol ^{D} (\gamma _{hv, hw}) \in \mathfrak g,
\end{equation}
where $\mathfrak g$ is now understood as $T _{e} G$. 
Define $R ^{D} _{p}$ analogously at other points of $p \in U$.

Given a $G$-bundle map $f: T	_{0} \to T _{1} $ of
$G$-bundles over a point let $$\operatorname {lie} f: \mathcal{R} (T _{0}) \to
\mathcal{R} (T _{1})$$ denote the induced mapping of the Lie
algebras. Note that for $T _{0} = T _{1} = G$ any such map
$f$ uniquely corresponds to left multiplication by some $g \in
G$.
\begin{lemma} \label{lem_} Let $\widetilde{f}: U \times G \to U \times
G$ be a smooth $G$-bundle map over $f: U  \to U$. Let
$\widetilde{f} _{p}$ denote the 
restriction of $\widetilde{f}$ to the map of the
fiber over $p$ to the fiber over $f (p)$.  
And let $D'= \widetilde{f} ^{*}D$ denote the pull-back
connection, then for $v,w \in T _{p} U$: 
\begin{align} \label{eq_pullback}
(f ^{*} R ^{D}) _{p}  & = \operatorname {lie}
\widetilde{f} _{p} \circ R ^{D'} _{p} \\
& = Ad _{g _{p}} (R ^{D'} _{p}),   
\end{align}
where $g _{p} \in G$ corresponds to $\widetilde{f} _{p}$
as above, and 
where $Ad _{g}$ denotes the adjoint action by $g$
\footnote{As it is natural here
to work with right invariant vector fields, we define the
adjoint action on the Lie algebra as the left $G$ action on 
right invariant vector fields.}.
\end{lemma}
\begin{proof} [Proof] 
For simplicity suppose that $p=0$ and $f (0) = 0$.
By definition of the pull-back connection we have:
\begin{equation*}
Hol ^{D'} (\gamma _{hv, hw})  = \widetilde {f} _{p} ^{-1} \circ
Hol ^{D} (f \circ \gamma _{hv, kw}).
\end{equation*}
So \begin{align*}
R ^{D'} _{0} (v,w) & = \frac{\partial }{\partial h \partial k}  \vert 
 _{(0,0)} Hol ^{D'} (\gamma _{hv, kw}) \\ 
 & = \operatorname
 {lie} \widetilde{f} _{p} ^{-1} (\frac{\partial }{\partial
 h \partial k} \vert _{(0,0)} Hol ^{D} (f \circ \gamma _{hv, kw}) \\ 
 & = \operatorname
 {lie} \widetilde{f} _{p} ^{-1} (\frac{\partial }{\partial
 h \partial k} \vert _{(0,0)} Hol ^{D} (\gamma _{h f_
 *v, kf _{*}w})  \\
 & = \operatorname {lie} \widetilde{f} _{p} ^{-1}
 (R ^{D} _{0} (f _{*}v,f _{*}w)) \\
 & = \operatorname {lie} \widetilde{f} _{p} ^{-1} (f ^{*}
 R ^{D} _{0} (v,w)).
\end{align*}
Here the third equality from the top is obtained as follows.
We have the composition:
\begin{equation*}
(V \subset  \mathbb{R} ^{} \times \mathbb{R} ^{}) \to \Omega _{0}
U \xrightarrow{\Omega f} \Omega _{0} U \xrightarrow{Hol
^{D}} G,
\end{equation*}
where $V \ni (0,0) $ is open, $\Omega f: \Omega _{0} U \to \Omega _{0} U$ is just
the map $\gamma \mapsto f \circ \gamma$, and the first map
is the map $(h,k) \mapsto \gamma _{hv, kw}$.
Note that the differential $D (\Omega f) _{0}: \Omega _{0} T _{0} U \to \Omega
_{0} T	_{0} U$, at the constant loop at zero (just
denoted as $0$), is the map $\eta \mapsto Df _{0} \circ
\eta$, i.e. it is the map $\Omega Df _{0}$. Then apply chain rule to this
composition.

\end{proof}
The curvature 2-form $R ^{D}$ on $Y$ is then defined as
follows. Let $\widetilde{f}:
U \times G \to P$, be a smooth $G$-bundle parametrization map
over a parametrization $f: U \to Y$.
Then define $R ^{D}$ by the condition:
$$\forall \widetilde{f}: \; (f ^{*}R ^{D}) _{p} = \operatorname {lie}
\widetilde{f} _{p} \circ R ^{\widetilde{f} ^{*} D} _{p}.  $$
It is an elementary verification using the lemma above that $R ^{D}$ is well
defined. 

\subsubsection{The algebra $\mathcal {I} (G)$ and the
homomorphism for smooth $G$-bundles}
\label{sec_The algebraI}

We denote by $\mathcal I ^{d} (G)$ the space of continuous,
symmetric multilinear functionals 
$$\prod_{i=1} ^{i=d}  \mathfrak g \to \mathbb{R} ^{}, $$
we will just call $d$-tensors, fixed by the adjoint $G$
action. Meaning that for $\rho \in \mathcal{I} ^{k} (G) $:
$$ \rho (Ad _{g}(\xi _{1}), \ldots, Ad _{g}(\xi _{k}) ) = \rho (\xi _{1}, \ldots, \xi _{k}), \quad \forall g \in G, \,
\xi _{i} \in  \mathfrak g.$$
And set $$\mathcal{I} (G) = \bigoplus _{d \geq 0} \mathcal{I}
^{d}(G).$$ This forms an algebra under the symmetric product, see for instance ~\cite[Chapter 12]{cite_bookKobayashiNomizuVol2}. 

As mentioned in the
introduction, in the infinite dimensional
setting $\mathcal{I} (G)$ may not be freely generated,
and is possibly very intricate
algebraically.

Now, let $\rho \in \mathcal{I} ^{k} (G)$.
As $\rho$ is $Ad$ invariant, it uniquely determines  
a multilinear map with the same name: $$\rho: \prod_{i=1} ^{i=k}
\mathcal{R} (P _{y}) \to \mathbb{R},$$
by taking any $G$-bundle map over a point $P
_{y} \to G$ and pulling back the functionals.  We may
now define a closed $\mathbb{R}$-valued
$2k$-form $\omega ^{\rho,D}$ on $Y$: 
\begin{equation} \label{eq:omegaclassical}
   \omega ^{\rho,D} (v _{1}, \ldots, v _{2k}) =
   \frac{1}{2k!} \sum _{\eta \in {S}
   _{2k}} \sign \eta \cdot \rho (R ^{D} (v _{\eta (1)}, v _{\eta ({2})}),
\ldots, R ^{D}(v _{\eta
(2k-1)}, v _{\eta _{2k}})),
\end{equation} 
for ${S} _{2k} $ the permutation group of a set
with $2k$ elements, and where $v _{1}, \ldots, v
_{2k} \in T _{y}Y$.    

Set $$cw ^{P,D} (\rho) = \omega ^{\rho,D}.$$ 

In this way we get a $dg$ map:
\begin{equation*}
cw ^{P,D}: \mathcal {I} (G) \to \Omega
^{\bullet } (Y, \mathbb{R} ^{} ).
\end{equation*}

Set $$\alpha ^{\rho,D}:=
\int \omega ^{\rho,D} \in C ^{2k} (Y, \mathbb{R} ^{} ). $$ 
Then we define the Chern-Weil characteristic class:  
\begin{equation}
   \label{eq:classicalChernWeil}
  c ^{\rho} (P) := [\alpha ^{\rho,D}] \in H ^{2k} (Y,
\mathbb{R}). 
\end{equation}

\subsection {Chern-Weil homomorphism for 
simplicial $G$-bundles}  
Now let $P$ be a simplicial $G$-bundle over a smooth
simplicial set $X$.
Fix a simplicial $G$-connection $D$ on $P$. 

For each simplex $\Sigma ^{d}$, we have the curvature 2-form $R ^{D} _{\Sigma}$ of
the connection $D_{\Sigma} $ on $P _{\Sigma}$,
defined as in the section just above. 
For concreteness: 
\begin{equation*}
 \forall v,w \in T _{z} \Delta^{d}: R ^{D}
_{\Sigma} (v,w)  \in \mathcal{R}  (P _{y}),
\end{equation*}
for $P _{z}$  the fiber of $P _{\Sigma}$ over $z
\in \Delta^{d} $. 

As in the previous section, let $\rho \in \mathcal{I} ^{k}
(G)$. 
We may now define a closed, $\mathbb{R}$-valued, 
simplicial differential $2k$-form $\omega ^{\rho,D}$ on $X$: 
\begin{equation*}
   \omega ^{\rho,D} _{\Sigma}  (v _{1}, \ldots, v _{2k}) =
   \frac{1}{2k!} \sum _{\eta \in {S}
   _{2k}} \sign \eta \cdot \rho (R ^{D} _{\Sigma }  (v _{\eta (1)}, v _{\eta ({2})}),
\ldots, R ^{D} _{\Sigma}(v _{\eta
(2k-1)}, v _{\eta _{2k}})).
\end{equation*} 
Set $cw ^{P,D} (\rho) = \omega ^{\rho,D}$.
This defines a $dg$ map:
\begin{equation*}
cw ^{P,D}: \mathcal {I} (G) \to \Omega
^{\bullet } (X, \mathbb{R} ^{} ).
\end{equation*}

\begin{definition}\label{def_dghomotopy}
Let 
\begin{equation*}
f _{i}: A \to \Omega ^{\bullet} (X, \mathbb{R}), \, i=0,1
\end{equation*}
be $dg$ maps of differential graded $\mathbb{R} ^{} $ algebras where $X$ is
a simplicial set (we work over $\mathbb{R}$ for
simplicity). We say
that $f _{i}$ are \textbf{geometrically homotopic}  if
there is a $dg$ map: $$\widetilde{f}: A \to \Omega
^{\bullet} (X \times I, \mathbb{R}),$$ 
satisfying $e _{0} \circ \widetilde{f} = f _{0}$, and $e
_{1} \circ \widetilde{f} = f _{1}$, where $e _{i}: \Omega
^{\bullet} (X \times I, \mathbb{R}) \to \Omega ^{\bullet} (X)$ are the $dg$ maps induced by
the end point inclusions $pt \to \Delta^{1} $.
\end{definition}
It is not hard to see that being geometrically homotopic is an
equivalence relation, but we will not need this.
%

\begin{lemma} \label{lemma:equality:chernweilclass}
For $P \to X$ as above $$cw ^{P,D _{0}} \simeq cw ^{P,D _{1}},$$ for any pair of simplicial $G$-connections $D _{0},D _{1}$ on $P$, where
$\simeq$ is the geometric homotopy relation.
The latter homotopy can be made to depend solely
on the choice of a concordance connection $\widetilde{D}$.
\end{lemma}
\begin{proof} 

For $D _{0},D _{1}$ as in the statement, fix
a concordance simplicial $G$-connection $\widetilde{D} $,
between $D _{0},D _{1}$, on the simplicial $G$-bundle $P \times I  \to X \times I$, 
as in Lemma \ref{lemma:connections:exist}. 

We have a diagram of $dg$ maps:
\begin{equation*}
\begin{tikzcd}
 \mathcal I (G) \ar[r, "cw
 ^{P,\widetilde{D}}"]   &  \Omega ^{\bullet} (X \times I,
 \mathbb{R} ^{}) \ar [r, "r _{i}"] & \Omega
^{\bullet} (X, \mathbb{R} ^{} ),
\end{tikzcd}
\end{equation*}
where 
\begin{itemize}
\item ${r} _{i}$ are the restriction maps corresponding to
the pair of natural inclusions $X \to X \times I$.
\item $r _{i} \circ cw ^{P,\widetilde{D}} = cw ^{P,
D _{i}}$, $i=0,1$.
\end{itemize}
\end{proof}
\begin{definition}\label{def_AinftyHomotopy}
Let 
\begin{equation*}
f _{i}: A \to B, \, i=0,1
\end{equation*}
be $dg$ maps of differential graded $\mathbb{R} ^{} $ algebras $A,B$. We say
that $f _{i}$ are \textbf{\emph{$A _{\infty}$ homotopic}}  if there is an $A
_{\infty}$ map: $$\widetilde{f}: A \to B \otimes \Omega
^{\bullet} (I, \mathbb{R} ^{}),$$ 
satisfying $\widetilde{e} _{0} \circ \widetilde{f}
= f _{0}$, and $\widetilde{e}
_{1} \circ \widetilde{f} = f _{1}$, where $\widetilde{e} _{i}: B \otimes
\Omega ^{\bullet} (I) \to B$ are the natural $dg$ maps
induced by the end point inclusions $pt \to I$, formally
defined in the proof of Proposition
\ref{prp_ainftyhomotopy}.
\end{definition}

A geometric homotopy induces an $A _{\infty}$ homotopy of
$dg$ maps. We relegate this to the Appendix
\ref{appendix_Ainfhomotopy}.

Set $$\alpha ^{\rho,D}:= \int \omega ^{\rho,D} \in C ^{2k}
(X, \mathbb{R} ^{} ). $$
Then in particular, the cohomology class:  
$$c ^{\rho} (P) := [\alpha ^{\rho,D}] \in H ^{2k} (X,
\mathbb{R}),
$$
is well defined, and this is called the Chern-Weil characteristic class.

\begin{notation}
\label{notation:represent} Let us denote by $cw ^{P}$ any
representative of the homotopy class $[cw ^{P,D}]$. (For smooth
or simplicial $G$-bundles $P$.)
\end{notation}
We have the expected naturality:
\begin{lemma}
   \label{lemma:pullbackChernWeil}  
Let $P$ be a simplicial $G$-bundle over $Y$,
$\rho$ as above and $f: X \to Y$ a smooth simplicial map.
Then $$f ^{*} \circ cw ^{P} \simeq  cw ^{f ^{*} P},
$$ where $\simeq $ as before means homotopic.
\end{lemma}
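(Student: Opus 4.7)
The strategy is to show that one can choose a single simplicial $G$-connection on $P$ and use its pull-back to compute both sides, at which point naturality is essentially a tautology. Concretely, fix a simplicial $G$-connection $D$ on $P \to Y$, whose existence is guaranteed by Lemma \ref{lemma:connections:exist}. I will first define a simplicial $G$-connection $f^*D$ on $f^*P \to X$, then show that its Chern-Weil $2k$-form equals the pull-back of the Chern-Weil form of $D$, and finally invoke Lemma \ref{lemma:equality:chernweilclass} to conclude independence on the connection.

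For the first step, recall that $f^*P$ is the functor $P \circ \Delta^{sm} f$, so for any smooth $\Sigma \colon \Delta^d_\bullet \to X$ we have an equality of smooth $G$-bundles $(f^*P)_\Sigma = P_{f \circ \Sigma}$ over $\Delta^d$. Define $(f^*D)_\Sigma := D_{f \circ \Sigma}$; the required compatibility of $f^*D$ under morphisms of $\Delta^{sm}(X)$ follows directly from the corresponding compatibility of $D$ under the image morphisms in $\Delta^{sm}(Y)$, since $\Delta^{sm}f$ is a functor and $f^*P = P \circ \Delta^{sm} f$.

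The second step is purely local on each simplex. Because $(f^*D)_\Sigma$ equals $D_{f \circ \Sigma}$ as a smooth $G$-connection on the same smooth $G$-bundle over $\Delta^d$, its curvature is literally the same $\lie \Aut$-valued 2-form: $R^{f^*D}_\Sigma = R^D_{f \circ \Sigma}$. Plugging this into the formula for the simplicial Chern-Weil form yields
\begin{equation*}
\omega^{\rho, f^*D}_\Sigma = \omega^{\rho, D}_{f \circ \Sigma} = (f^*\omega^{\rho,D})_\Sigma,
\end{equation*}
where the last equality is the definition of pull-back of simplicial differential forms. Integrating simplex-wise and using that the simplicial cochain pull-back by $f$ satisfies $(f^*\alpha)(\Sigma) = \alpha(f \circ \Sigma)$, we obtain the cochain-level equality $\alpha^{\rho, f^*D} = f^*\alpha^{\rho, D}$. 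Passing to cohomology gives $c^\rho(f^*P) = f^* c^\rho(P)$, and Lemma \ref{lemma:equality:chernweilclass} confirms that both sides are independent of the chosen connections, so the result does not depend on the particular $D$ we started with.

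There is really no substantive obstacle here; the only thing to be careful about is that the naturality identities $(f^*P)_\Sigma = P_{f \circ \Sigma}$ and $(f^*\omega)_\Sigma = \omega_{f \circ \Sigma}$ hold as genuine equalities (not just natural isomorphisms), which is exactly the content of the functorial definitions of pull-back adopted in Section \ref{sec:pullbacksimplicialbundle} and in the discussion of pull-back of forms.
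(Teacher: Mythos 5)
Your proposal is correct and follows exactly the paper's argument: fix a connection $D$, define the pull-back connection by $(f^*D)_\Sigma = D_{f(\Sigma)}$, observe $\omega^{\rho,f^*D} = f^*\omega^{\rho,D}$, and pass to cohomology (the paper states this in three lines; you have simply filled in the routine verifications). No discrepancy to report.
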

\begin{proof}
  Let $D$ be a simplicial $G$-connection on $P$. 
   Define the pull-back connection $f ^{*}D$ on  
   $f ^{*}P$ by $f ^{*} D ({\Sigma}) = D _{f 
   (\Sigma) } $.   
  Then $f ^{*}D$ is a simplicial $G$-connection on
   $f ^{*}P$. Now,  
\begin{align*}
	\forall \Sigma: \omega ^{\rho, f ^{*} D} (\Sigma) & = 
  \omega ^{\rho, D} (f(\Sigma)), \text{ by definition of $f
	^{*}D$} \\ 
	& = f ^{*} \omega ^{\rho,D}(\Sigma),
	\text{ definition \eqref{equation_pullback}}.
	\end{align*}	 
And consequently, $\omega ^{\rho, f ^{*} D} = f ^{*} \omega
^{\rho,D}$. It follows that:
\begin{equation*}
f ^{*} \circ cw	^{P,D} = cw ^{f ^{*}{P}, f ^{*}D }.
\end{equation*}
The result then readily follows by Lemma
\ref{lemma:equality:chernweilclass}.
\end{proof}

\begin{proposition} \label{prop:expected}   
Let $G \hookrightarrow Z \to Y$ be an ordinary smooth
principal $G$-bundle, and $\rho$ as above. Let $Z ^{\D}$  be the induced simplicial $G$-bundle over $Y _{\bullet}$  as in Example \ref{ex:inducedbundle}.
Then: 
\begin{enumerate}
\item  The form $\omega ^{\rho,D ^{\D}}$ is the simplicial differential form induced by
$\omega ^{\rho,D}$, where induced is as in Example
\ref{exampleInducedForm}. In particular, $$cw ^{Z ^{\Delta^{}
}} \simeq  \Theta \circ cw ^{Z}, $$ where $\Theta $ is as in
\eqref{equation_G}.
\item If $c ^{\rho} (Z) \in H ^{2k}
(Y, \mathbb{R}) $ is the Chern-Weil
characteristic class as in \eqref{eq:classicalChernWeil}, then   
\begin{equation} \label{eq:5}
 |c ^{\rho}  (Z ^{\D})| _{sm} =  c ^{\rho} (Z),
\end{equation} 
where $|c ^{\rho} (Z ^{\D})| _{sm}$ is as
in part \ref{notation:geomRealizationClasspart2} of Notation \ref{notation:geomRealizationClass}.
\end{enumerate}
\end{proposition}
\begin{proof} 
Fix a smooth $G$-connection $D$ on $Z$.  This induces
a simplicial $G$-connection $D ^{\D}$ on  $Z ^{\D}$, as in Example \ref{example:pullbackconnection}. Let $\omega ^{\rho,D} $ denote the smooth Chern-Weil differential $2k$-form on $Y$, as in \eqref{eq:omegaclassical}.

Now, 
\begin{align*}
	\forall \Sigma: \omega ^{\rho,D ^{\D}} (\Sigma) & = \omega ^{\rho,
	\widetilde{\Sigma} ^{*}D} \text{ by definitions
} \\
& = \Sigma ^{*} \omega ^{\rho, D} \text{ by standard naturality of
Chern-Weil forms}. 
\end{align*}
So we obtain the first part of the Proposition. 

Let $\alpha ^{\rho,D}  = \int \omega ^{\rho,D} \in C ^{2k} (Y, \mathbb{R}) $.  
It readily follows by Lemma \ref{lemma_Iveetrivial} that:
\begin{equation*}
 |c ^{\rho}  (Z ^{\D})| _{sm} = (I ^{\vee }) ^{-1} ([\alpha ^{\rho,D ^{\D}}]) = [\alpha ^{\rho,D}] = c ^{\rho} (Z),  
\end{equation*} 
where $I ^{\vee }$ is as in \eqref{eq:RlowerstarSpaceSmooth2}. 
\end{proof}

\section{The universal simplicial $G$-bundle}
\label{sec_universalsimplicialGbundle}
Briefly, a Grothendieck universe is a set $\mathcal{U}$ forming a model for set theory. 
That is if we interpret all terms of set theory
as elements of $\mathcal{U} $, then all the set
theoretic constructions keep us within
$\mathcal{U} $.
We will assume Grothendieck's axiom of universes which says
that for any (pure) set $X$  there is a Grothendieck
universe $\mathcal{U} \ni X$. Intuitively, such a
universe $\mathcal{U} $ is formed by taking
all possible set theoretic constructions starting with $X$.  For example if $\mathcal{P} (X) $ denotes the power set of $X$, then
$\mathcal{P  } (X)  \in \mathcal{U} $. 
Note that this axiom is beyond $ZFC$, and the resulting
axiomatic system is sometimes denoted as $ZFCG$. This is now a
common framework of modern set theory,
especially in the context of category
theory, c.f. ~\cite{cite_LurieHighertopostheory}.
In some contexts one works with universes
implicitly. This is impossible here, as we need to
establish certain universe independence.

Let $G$ be a locally convex Lie group. Let $\mathcal{U}$
be a Grothendieck universe satisfying:
$$G \in \mathcal{U}, \quad \forall n  \in
\mathbb{N}:  \Delta^{n} \in \mathcal{U},
$$ where $\Delta^{n} $ are the usual topological
$n$-simplices.  Such a $\mathcal{U}$ will be called \textbf{\emph{$G$-admissible}}.

We will construct smooth Kan complexes $BG
^{\mathcal{U}} $ for each $G$-admissible $\mathcal{U}$.
Moreover, we will construct a weak equivalence $|BG
^{\mathcal{U}}| \to BG$ for each $\mathcal{U} $, where $BG$
the standard Milnor classifying space.

If $G$ has the homotopy type of a CW complex, then $BG$ has the homotopy 
type of a CW complex.  In particular, by Whitehead's theorem
the homotopy type of $|BG ^{\mathcal{U}}| $ is
the independent of $\mathcal{U} $, and in fact $|BG
^{\mathcal{U}}| $ is $BG$ up to homotopy.


\begin{definition} \label{def:usmall}
A \textbf{\emph{$\mathcal{U}$-small set}} is  an
element of $\mathcal{U}$. For $X$  a smooth
simplicial set, a smooth simplicial
$G$-bundle $P \to X$ will be called 
\textbf{\emph{$\mathcal{U}$-small}} if for each $n$ $X (n)
$ is $\mathcal{U} $-small, and
for each 
simplex $\Sigma$ of $X$ the bundle $P _{\Sigma} 
$ is $\mathcal{U}$-small.  
\end{definition}

\subsection {The classifying spaces $BG
^{\mathcal{U} }$} \label{sec:classifyingspace} Let
$\mathcal{U} $ be $G$-admissible.
We define  a simplicial set $BG ^{\mathcal{U}}$,
whose set of $k$-simplices  
$BG ^{\mathcal{U} } (k) $ is the set 
of  $\mathcal{U}$-small smooth simplicial
$G$-bundles over $\D ^{k} _{\bullet} $. 
The simplicial maps are defined by
pull-back so that given a map $i \in hom _{\D} (m, n) $ the map $$BG ^{\mathcal{U}} (i): BG ^{\mathcal{U}} (n) \to BG ^{\mathcal{U}} (m)  $$ is the natural pull-back:
\begin{equation*}
   BG ^{\mathcal{U}}(i) (P) =  i _{\bullet
   } ^{*}P,
\end{equation*}
for $i _{\bullet}$, the induced map $i _{\bullet}:
\D ^{m} _{\bullet } \to \D ^{n} _{\bullet}  $, $P
\in BG ^{\mathcal{U} } (n) $ a simplicial
$G$-bundle over $\Delta^{n} _{\bullet }$,  and
where the pull-back map $i _{\bullet} ^{*}$  is as in Section
\ref{sec:pullbacksimplicialbundle}.  Then Lemma
\ref{lemma:pullbackisfunctorial} ensures that $BG
^{\mathcal{U}}: \Delta^{op} \to s-Set $ is a functor, 
so that we get a simplicial set $BG ^{\mathcal{U}}$.  

We define a smooth simplicial set structure $g$ on $BG
^{\mathcal{U}}$ as follows. Given a $d$-simplex
$P \in BG ^{\mathcal{U}} (d) $ the induced map  
$$(g (P) = P _{*}): \D ^{d}
_{\bullet}  \to BG ^{\mathcal{U}}, $$ is defined
naturally by 
\begin{equation} \label{equation_Pstar}
P _{*} (\sigma) := \sigma ^{*} _{\bullet} P.
\end{equation}
where $P$ on the right is the initial simplicial
$G$-bundle $P \to \Delta^{d} _{\bullet
}$.  
More explicitly,  $\sigma \in \D ^{d} _{\bullet}
(k) $
is a smooth map $\sigma: \Delta ^{k} \to
\Delta^{d}  $, $\sigma _{\bullet}: \Delta ^{k}
_{\bullet} \to \Delta ^{d}_{\bullet }$  denotes
the induced map and the pull-back is as previously 
defined.
We need to check the push-forward functoriality
Axiom \ref{axiom:pushforward}.

Let $\sigma \in  \Delta^{d} _{\bullet}
(k) $, then for all $j \in \mathbb{N}, \rho \in 
\Delta^{k} _{\bullet} (j)$:
\begin{align*}
   (P _* (\sigma)) _{*} (\rho) & = (\sigma ^{*} 
    _{\bullet} P) _{*} (\rho)   \\
    &  = \rho ^{*} _{\bullet} (\sigma ^{*} 
    _{\bullet} P), \text{ by definition of $g$}. \\
\end{align*} 
And
\begin{align*}
   P _{*} \circ \sigma
   _{\bullet} (\rho) & = (\sigma _{\bullet} 
   (\rho)) _{\bullet} ^{*} P \\ 
   & = (\sigma _{\bullet} 
    \circ \rho _{\bullet }) ^{*}P,  \text{ as 
    $\sigma _{\bullet}$ is smooth }   \\
   & = \rho ^{*} _{\bullet} (\sigma ^{*} 
    _{\bullet} P).  \\
\end{align*}
And so 
\begin{equation*}
    (P _* (\sigma)) _{*} =  P _{*} \circ \sigma
   _{\bullet},
\end{equation*}
so that $BG ^{\mathcal{U} }$   is indeed a smooth
simplicial set.
%
%

\subsection {The universal smooth simplicial
$G$-bundle $EG ^{\mathcal{U}} \to BG ^{\mathcal{U} }$}
To abbreviate already complex notation, in what follows $V$ denotes $BG ^{\mathcal{U}}$
for a general, $G$-admissible
$\mathcal{U}$.  There is a tautological functor 
\begin{equation} \label{equation_E}
E: \D
^{sm} (V) \to
\mathcal{G}
\end{equation}
that we now describe. 

A smooth map $P: \Delta^{d} _{\bullet }
\to V$, uniquely corresponds to a $d$-simplex $P ^{b}$  of $V$ via
Proposition \ref{prop:smoothsimplices},  
i.e. a simplicial $G$-bundle  $P ^{b} \to  \Delta^{d}
_{\bullet}$. In other words  $P ^{b}$ is the bundle:
\begin{equation} \label{equation_Pbequality}
P
^{b} = P (id ^{d}),
\end{equation}
for $id ^{d}: 
\Delta^{d} \to \Delta^{d} $ the identity, and 
where the equality is an equality of simplicial $G$-bundles, in other words functors.
\begin{notation}
   \label{notation:} Although we disambiguate 
   in the discussion just below, later on we may conflate the notation $P,P ^{b}$ with just $P$. 
\end{notation}
Recalling that $P ^{b}$ is a certain functor
$\Delta ^{sm} (\Delta^{d}_{\bullet}) \to \mathcal{G}$ we then
set: 
$$E (P) = P ^{b} ({id} ^{d} _{\bullet}).$$

We now define the action of $E$ on morphisms. Suppose we have a morphism   
$m \in \Delta^{sm}
(V) $:
\begin{equation*}
\begin{tikzcd}
\Delta^{k} _{\bullet } \ar[r, "\widetilde{m} _{\bullet}"] \ar [rd, "P _{1}"]
   &  \Delta^{d} _{\bullet} \ar [d,"P _{2}"]
   \\
    & V, 
\end{tikzcd}
\end{equation*}  
then we have an equality:
\begin{equation}\label{eq:Pb}
   \begin{split}
      P _{1}^{b} & = P _{1} (id ^{k}) \text{
			\eqref{equation_Pbequality}} \\
			& = (P _{2} \circ 
   \widetilde{m} _{\bullet }) (id ^{k})  \\
   & = P _{2} (\widetilde{m}) \\
   & = (P _{2}^{b}) _{*} (\widetilde{m}), \text{ thinking of
	 $P _{2} ^{b}$ as a simplex of $V$}  \\
   & = P _{2} ^{b} \circ \Delta ^{sm}\widetilde{m} 
   _{\bullet}, \text{ by \eqref{equation_Pstar}}.
   \end{split}
\end{equation}

%
So that
\begin{equation*}
   P _{1} ^{b} (id ^{k} _{\bullet}) = P _{2} ^{b}
   (\widetilde{m} _{\bullet} \circ id^{k} 
   _{\bullet})  = P _{2} ^{b}
   (\widetilde{m} _{\bullet}).  
\end{equation*}
We have a tautological morphism $e_m \in \Delta ^{sm}
(\Delta^{d} _{\bullet}) $ corresponding to the
diagram:
\begin{equation*}
\begin{tikzcd}
\Delta ^{k} _{\bullet} \ar[r, "
{\widetilde{m} _{\bullet} }"] \ar [rd, "\widetilde{m}
   _{\bullet }"] & \Delta^{d} _{\bullet}  \ar
   [d,"id ^{d} _{\bullet}"] \\
   & \Delta ^{d} _{\bullet }.
\end{tikzcd}
\end{equation*}
So we get a smooth $G$-bundle map:
$$P _{2} ^{b} (e _{m}): (E (P _{1}) = P _{2} ^{b}
(\widetilde{m} _{\bullet})) \to (E (P _{2}) = P
_{2} ^{b} (id ^{d} _{\bullet})),   $$ which is over the
smooth map $\widetilde{m}: \Delta^{k} \to
\Delta^{d}  $  induced by $\widetilde{m}
_{\bullet}$. 
And we set $E (m) = P ^{b} _{2} (e _{m}) $. 

We need to
check that with these assignments $E$ is a functor. Suppose we have a
diagram:
\begin{equation*}
 \begin{tikzcd}
    \Delta ^{l} _{\bullet} \ar [r, "\widetilde{m}
    ^{0} _{\bullet }"] \ar [rrd, bend
    right, "P
    _{0}"]  & \Delta^{k} _{\bullet } \ar[r,
    "\widetilde{m} ^{1} _{\bullet}" ] \ar [rd, "P _{1}"]
    &  \Delta^{d} _{\bullet} \ar [d,"P _{2}"] \\
  &   & V. 
 \end{tikzcd}
 \end{equation*}
In other words, we have a diagram for the composition $m = {m
^{1}} \circ {m ^{0}}$ in $\Delta^{sm} (V)$.
Then $e _{m} = e _{m^1} \circ e' _{m ^{0}} $ where
$e' _{m^0}$ is the diagram: 
\begin{equation*}
\begin{tikzcd}
\Delta ^{l} _{\bullet} \ar[r, "
{\widetilde{m} ^{0}_{\bullet} }"] \ar [rd, "\widetilde{m}
   _{\bullet }"] & \Delta^{k} _{\bullet}  \ar
   [d,"\widetilde{m} ^{1}_{\bullet} "] \\
   & \Delta ^{d} _{\bullet }, 
\end{tikzcd}
\end{equation*}
and $e _{m ^{1}}$  is the diagram:
\begin{equation*}
\begin{tikzcd}
\Delta ^{k} _{\bullet} \ar[r, "
{\widetilde{m} ^{1}_{\bullet} }"] \ar [rd,
   "\widetilde{m} ^{1}
   _{\bullet }"] & \Delta^{d} _{\bullet}  \ar
   [d,"id ^{d}_{\bullet} "] \\
   & \Delta ^{d} _{\bullet }. 
\end{tikzcd}
\end{equation*}
So $$E (m) = P _{2} ^{b} (e _{m}) = P _{2} ^{b} (e _{m 
^{1}})
\circ  P _{2} ^{b} (e' _{m ^{0}}) = E (m ^{1}) \circ P _{2} ^{b} (e' _{m ^{0}}). $$
Now, 
\begin{align*}
E (m _{0}) 
& =P ^{b}
_{1}   (e _{m ^{0}}) \\
& = (P _{2} ^{b} \circ \Delta 
^{sm} \widetilde{m} ^{1} _{\bullet}) (e _{m ^{0}}), \text{
analogue of \eqref{eq:Pb}}   \\   
& = P _{2} ^{b} (e' _{m _{0}}). 
\end{align*}
And so we get:
$E (m) = E (m _{1}) \circ E (m _{0}).  $   Thus, $E$ is a functor.


By construction the
functor ${E} $
satisfies the compatibility condition, and hence
determines a simplicial $G$-bundle.

\begin{definition}\label{definition_EG}
Given $G,\mathcal{U} $ as previously, 	the universal
simplicial $G$-bundle $EG ^{\mathcal{U}}$ is defined to be
the functor $E$ as constructed above.
\end{definition}
 \begin{proposition} \label{prop:BGisKan}
$BG ^{\mathcal{U}}$ is a Kan complex.
\end{proposition}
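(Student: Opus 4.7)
The plan is to show that any horn $\phi: \Lambda^n_k \to V$ can be filled. Unwinding definitions, $\phi$ is the data of $\mathcal{U}$-small smooth simplicial $G$-bundles $P_i$ over $\Delta^{n-1}_{\bullet}$, one for each $i \in \{0,\ldots,n\} \setminus \{k\}$, satisfying the horn compatibility $d_j^* P_i = d_{i-1}^* P_j$ for $j < i$ with $i,j \neq k$. A filler is an $n$-simplex $\widetilde{P} \in V(n)$, i.e., a $\mathcal{U}$-small smooth simplicial $G$-bundle over $\Delta^n_{\bullet}$, with $d_i^* \widetilde{P} = P_i$ for $i \neq k$.

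The strategy is to construct an ordinary smooth $G$-bundle $\widetilde{N}$ on $\Delta^n$ and take $\widetilde{P} := \widetilde{N}_{\bullet}$ (the simplicial $G$-bundle it induces, as in Example \ref{ex:inducedbundle}). For the first step I apply Theorem \ref{lemma:inducedSmoothGbundle} to each $P_i$ to obtain a smooth $G$-bundle $N_i \to \Delta^{n-1}$ together with a simplicial $G$-bundle isomorphism $\alpha_i: (N_i)_{\bullet} \xrightarrow{\simeq} P_i$. Transporting the horn compatibility through $\alpha_i, \alpha_j$ yields smooth $G$-bundle isomorphisms $\beta_{ij}: d_j^*N_i \xrightarrow{\simeq} d_{i-1}^*N_j$ over $\Delta^{n-2}$, satisfying the cocycle condition on triple overlaps, so the $N_i$ assemble to a smooth $G$-bundle structure on the topological horn $\Lambda^n_k \subset \partial\Delta^n$ (realized as a compatible family over the closed faces).

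Next I extend to $\Delta^n$. The key geometric observation is that $\Lambda^n_k$ is star-shaped from the vertex $v_k$: every face $d_i\Delta^n$ with $i \neq k$ contains $v_k$, so each face (and the union) is linearly contractible to $v_k$ within itself. In particular the horn bundle is trivializable: fixing a trivialization of $N_i$ at $v_k$ and propagating along straight lines through $v_k$ yields compatible smooth trivializations that patch across the $\beta_{ij}$. I then take $\widetilde{N} := \Delta^n \times G$ on $\Delta^n$ equipped with this trivialization on the horn. The $\mathcal{U}$-smallness of $\widetilde{N}$ follows because it is built by $\mathcal{U}$-small colimit/gluing operations from $\mathcal{U}$-small pieces and the admissibility of $\mathcal{U}$ for $G$ and the $\Delta^n$.

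Finally I set $\widetilde{P} := \widetilde{N}_{\bullet}$ and check the face conditions. By construction $d_i^*\widetilde{N}$ is canonically identified with $N_i$ for $i \neq k$, so $d_i^*\widetilde{P} \simeq (N_i)_{\bullet} \simeq P_i$ as simplicial $G$-bundles. Because $V(n-1)$ has actual bundles as elements and face maps are strict pullbacks, the genuine equality $d_i^*\widetilde{P} = P_i$ is achieved by a final bookkeeping step: replace $\widetilde{N}$ by the isomorphic bundle obtained by transporting the $G$-bundle structure on $d_i^*\widetilde{N}$ backward along $\alpha_i^{-1}$ on each face (this is where one uses that any smooth $G$-bundle map extends to a global isomorphism due to the star-shape). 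The two main technical points are (i) constructing the smooth extension of the horn bundle to $\Delta^n$, which is precisely what the star-shape from $v_k$ buys us, and (ii) upgrading isomorphism to strict equality of pullbacks, which is a non-deep but bookkeeping-heavy reshuffling.
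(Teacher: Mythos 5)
Your overall strategy is sound and runs parallel to the paper's: both arguments trivialize the horn data (your star-shape observation from $v_k$ is exactly the right generalization of the paper's edge-by-edge trivialization starting at the horn vertex, carried out there for $n=2$), extend trivially over the top cell, and then address $\mathcal{U}$-smallness and the strict face conditions. The main organizational difference is that you invoke Theorem \ref{lemma:inducedSmoothGbundle} to replace each face datum $P_i$ by an induced bundle $(N_i)_{\bullet}$ (note that theorem is stated for manifolds, so you are implicitly extending it to the manifold with corners $\Delta^{n-1}$), whereas the paper never needs that theorem here: it only trivializes the individual bundles $P(\sigma)$ over the contractible simplices of the horn, and then builds the top-cell bundle $\widetilde{P}_{\sigma^2}$ by an explicit set-theoretic gluing (trivial over the interior, the prescribed bundles over the horn faces) with an explicit metric topology. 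That explicit gluing is what simultaneously delivers $\mathcal{U}$-smallness and the literal equalities of the face restrictions.

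The one step that, as written, does not work is your final bookkeeping: you propose to achieve $d_i^*\widetilde{P} = P_i$ by replacing the ordinary bundle $\widetilde{N}$ with an isomorphic one. This cannot succeed in general, because $P_i$ need not be an induced simplicial bundle: for a general $P_i$ and a smooth simplex $\sigma: \Delta^j \to \Delta^{n-1}$, the value $P_i(\sigma)$ is only canonically isomorphic to $\sigma^* P_i(id)$ (via the structure bundle map over $\sigma$), not equal to it, whereas any induced bundle $\widetilde{N}_{\bullet}$ satisfies $\widetilde{N}_{\bullet}(d_i \circ \sigma) = (d_i \circ \sigma)^* \widetilde{N}$ on the nose. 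Hence no choice of $\widetilde{N}$ makes $d_i^*(\widetilde{N}_{\bullet})$ equal to $P_i$ as functors. The repair is to modify the functor $\widetilde{P}$ rather than the bundle $\widetilde{N}$: set $\widetilde{P}(\tau) := P_i(\sigma)$ whenever $\tau$ factors as $d_i \circ \sigma$ through a horn face (this is consistent when $\tau$ lies in several faces, by the strict horn compatibilities coming from $h$ being simplicial together with Lemma \ref{lemma:pullbackisfunctorial}), set $\widetilde{P}(\tau) := \tau^* \widetilde{N}$ otherwise, and use the natural isomorphisms $\alpha_i$ to define $\widetilde{P}$ on morphisms crossing between the two regimes; functoriality then follows from naturality of the $\alpha_i$. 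With that replacement your argument goes through, and in substance reproduces what the paper's explicit construction accomplishes directly.
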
                                           
\begin{proof} 

In what follows we again abbreviate $BG ^{\mathcal{U} }$ as
$V$. Recall that $\Lambda ^{n} _{k} \subset \Delta ^{n} _{simp}$, denotes the sub-simplicial set corresponding to the
``boundary'' of $\Delta ^{n} $ with the
$k$'th face removed, where by $k$'th face we mean
the face opposite to the $k$'th vertex. Let $h: \Lambda ^{n}
_{k} \to V$, $0 \leq k \leq n $,
be a simplicial map, this is also called a horn.   We need
to construct an extension of $h$ to $\Delta^{n}
_{simp}$.

For simplicity we assume $n=2$, and $k=1$ as the general case
is identical. 
There are three natural inclusions $$i _{j}: \D ^{0} _{simp}
 \to \Delta^{2} _{simp},$$ $j=0,1,2$, with $i _{1}$ corresponding to the inclusion of the horn vertex. The corresponding 0-simplices will be denoted by $0,1,2$.
Let $$\sigma _{i,j}: \D ^{1} _{simp} \to \Delta^{2}  $$  be the edge between vertexes $i,j$, that is $\sigma  _{i,j} (0) = i$, $\sigma  _{i,j} (1)=j$.

Let us denote by $L$ the smooth sub-simplicial set of
$\Delta^{2} _{\bullet } $  corresponding to the simplices whose
images lie in $\image \sigma _{0,1}$  or $\image \sigma
_{1,2}$. There are then smooth maps $\sigma _{i,j}:
\Delta^{1}  _{\bullet} \to L$ extending $\sigma _{i,j}$
above.

The map $h$ above, induces a smooth map $h _{\bullet}: L \to
V$, and we denote $P := h _{\bullet } ^{*} E $. So $P \to L$
is a simplicial $G$-bundle. The extension of $h$ to
$\Delta^{2} _{simp}$ will be accomplished once we extend the
$P$ over $\Delta^{2} _{\bullet }$.


%

\begin{lemma}
The bundle $P \to L$ is trivializable.
\end{lemma} 

\begin{proof} 
Set $P _{i,j}:= \sigma _{i,j} ^{*} P$,  
then by Theorem \ref{lemma:inducedSmoothGbundle}
$P _{i,j}$ is induced by a smooth $G$-bundle over
$\Delta^{1} $. The latter is trivializable, and hence $P
_{i,j}$ is trivializable as a simplicial $G$-bundle. 
Denote by $\phi _{i,j}: \Delta^{1} _{\bullet} \times G \to
P _{i,j}$ the corresponding trivialization over the $id:
\Delta^{1}  _{\bullet} \to \Delta^{1}  _{\bullet}$.

Denoting $0,1 \in \Delta^{1} _{\bullet} (0)$ the end-point
vertices as previously, $\phi _{0,1}$ induces
a $G$-equivariant smooth map:
\begin{equation*}
G = (\Delta^{1} _{\bullet} \times G) (1) \to P _{0,1} (1),
\end{equation*}
and we denote this map by $\phi  _{1}$.
Likewise, $\phi _{1,2}$ induces a smooth map:
\begin{equation*}
G = (\Delta^{1} _{\bullet} \times G) (0) \to P _{1,2} (0),
\end{equation*}
and we denote this map by $\phi  _{2}$.

Now the map $\phi _{1} ^{-1} \circ \phi _{2}$ may  not be the
$id: G \to G$, but clearly we may adjust $\phi _{1,2}$  so that it is. And so we may assume this holds.

Then $\phi _{0,1}$ and $\phi _{1,2}$ clearly induce
a trivialization $tr: T \to P$, for $T \to L$ the trivial
simplicial $G$-bundle.

\end{proof}
We have the trivial extension of $T$ to the trivial
simplicial $G$-bundle over $\Delta^{2} _{\bullet } $.
And so by the lemma above it should be clear that $P$
likewise has an extension $\widetilde{P} $ over $\Delta^{2}
_{\bullet }$, but we need this extension to be
$\mathcal{U} $-small so that we must be explicit.

We proceed inductively. Let $D ^{0}$ denote the full
sub-category of $\Delta^{sm} (\Delta^{2} _{\bullet})$ with
the set objects $\obj L \cup \Delta^{2} _{\bullet } (0)
$ (non-disjoint union).

We extend $P$ to a functor $\widetilde{P} ^{0}: D ^{0} \to
\mathcal{G}$. For ${\sigma} \in \Delta^{2} _{\bullet} (d)$, if $\sigma $ has image in the horn $\Lambda ^{2} _{1} \subset
\Delta^{2} $, then set $\widetilde{P} ^{0} (\sigma) = P (\sigma
)$.  
The extension of $\widetilde{P} ^{0}$ to morphisms in $D
^{0}$ is then taken to be the trivial extension. 

 Let $T ^{0}: D ^{0} \to
\mathcal{G}  $ be the trivial functor (as in the definition
of a trivial bundle in Example \ref{example:product}). 
Then in addition, there is clearly a natural
transformation $tr ^{0}: T ^{0} \to \widetilde{P} ^{0}$ extending the natural transformation $tr$ of the lemma above.

Let $S (n)$ be the statement:
\begin{enumerate}
	\item There is an extension $\widetilde{P} ^{n} $ of $P$
	over the full-subcategory $D ^{n} \subset \Delta^{sm}
	(\Delta^{2} _{\bullet})$, defined analogously to $D ^{0}$,
	with objects $$\obj L \cup \bigcup _{0 \leq k \leq n}
	\Delta^{2} _{\bullet} (k). $$ \label{property_extend1}
	\item $\widetilde{P} ^{n} $ satisfies compatibility.
	\item There is a natural
transformation $tr ^{n}: T ^{n} \to \widetilde{P} ^{n}$ extending the natural transformation $tr$ of the lemma above.
Where $T ^{n}: D ^{n} \to \mathcal{G} $ is the trivial
functor defined analogously to $T ^{0}$ above.
\label{property_extend}
\end{enumerate}

We prove $$S (n) \implies S (n+1),$$ and that moreover
the corresponding functor $\widetilde{P} ^{n+1}$ can be
chosen to extend $\widetilde{P} ^{n}$. Then natural induction
implies the existence of the needed extension $\widetilde{P}
$ over $\Delta^{2} _{\bullet}$.


Let $\sigma \in \Delta^{2} _{\bullet } (n+1) $. 
By the hypothesis $S (n)$ the functor:
\begin{equation*}
\sigma _{\bullet} ^{*}\widetilde{P} ^{n}: C _{n+1} \to \mathcal{G},  
\end{equation*}
is defined, where $C _{n+1}$ denotes the sub-category of $\Delta^{}
(\partial \Delta^{n+1} _{simp})$ with objects all non-degenerate
objects and with morphisms injections. (The pull-back is as
in Section \ref{sec:pullbacksimplicialbundle}). 

We then have a topological bundle $$p': N' _{\sigma}	\to
\partial \Delta^{n+1} $$ defined as the colimit of
$\sigma _{\bullet} ^{*}\widetilde{P} ^{n}$ over $C _{n+1}$.


Let $$inc: \partial \Delta ^{n+1} \to \Delta^{n+1} $$ denote
the inclusion.
We first construct a principal $G$-bundle with discrete topology
$$N _{\sigma} \xrightarrow{p} \Delta^{n+1}, $$
by the following conditions: 
\begin{align} \label{equation_identification1}
N  _{\sigma}| _{\partial \Delta^{n+1} } := p ^{-1} (\partial
\Delta^{n+1} ) & = N' _{\sigma}, \\ 
N  _{\sigma}| _{(\Delta^{n+1}) ^{\circ}} &
= (\Delta^{n+1}) ^{\circ} \times G, \label{equation_identification2}
\end{align}
where the projection map $p$ is determined by the maps
$p': N' _{\sigma } \to \partial \Delta^{n+1}  $,  and the
projection map $(\Delta^{n+1}) ^{\circ} \times G \to (\Delta^{n+1}) ^{\circ}$.

By the inductive hypothesis $S (n)$ part \ref{property_extend},
there is a distinguished trivialization 
$h _{n}: \partial \Delta^{n+1} \times G \to N' _{\sigma}$, corresponding to $tr ^{n}$.
The map $h _{n}$ and the identity map $(\Delta^{n+1})
^{\circ} \times G \to (\Delta^{n+1})
^{\circ}$ induce a discrete $G$-bundle isomorphism $\Delta^{n+1}
\times G \to {N}_{\sigma}$. 
Then push-forward the smooth $G$-bundle structure and the
topology along this
map. The resulting smooth $G$-bundle is then set to be
$\widetilde{P} ^{n+1} ({\sigma})$. 

We have thus defined the extension $\widetilde{P} ^{n+1}$ on
objects. We now need to
treat morphisms in $D ^{n+1} \subset \Delta^{} (\Delta^{2} _{\bullet }) $. 
For any $d$-simplex $\rho$ of $\Delta^{2} $ let $\rho
_{i}$ denote the $i$'th face of $\rho$, $0 \leq
i \leq d$. For clarity, this means that we have an inclusion
morphism $inc _{i}: \rho _{i} \to \rho $ corresponding to the diagram:
\begin{equation*}
\begin{tikzcd}
\Delta^{n} \ar[r, "\widetilde{inc}  _{i}"] \ar [dr, "\rho _{i}"]
&  \Delta^{d}  \ar [d,"\rho"] \\
& \Delta^{2},
\end{tikzcd}
\end{equation*}
where $\widetilde{inc}  _{i}$ is the topological face
inclusion map corresponding to the face opposite the vertex $i$, also called the $i$-face of $\Delta^{d} $.

By construction we have natural maps:
\begin{equation*}
\widetilde{P} ^{n+1} (inc _{i}): \widetilde{P} ^{n}
(\sigma _{i}) = \widetilde{P} ^{n+1}
(\sigma _{i}) \to \widetilde{P} ^{n+1} (\sigma ),
\end{equation*}
for each $i \in \{0, \ldots, n+1\}$.

If $m: \sigma ^{n+1} \to \rho ^{n+1}$ is an identity
morphism, then set $\widetilde{P} ^{n+1} (m)$ to
be the $id$.

Suppose we given a morphism $m: \sigma \to \rho$, $\sigma \in
\Delta^{2} _{\bullet } (n+1)$, $\rho \in
\Delta^{2} _{\bullet } (n)$. 
We then define $\widetilde{P} ^{n+1} (m)$ as follows.
First we define a map:
\begin{equation*}
\partial: inc ^{*} \widetilde{P} ^{n+1} (\sigma)
\to \widetilde{P} ^{n+1} (\rho),
\end{equation*}
for $inc: \partial \Delta ^{n+1} \to \Delta^{n+1} $ the
inclusion. 

Over a $j$-face of $\Delta^{n+1} $, $\widetilde{P} ^{n+1}
(\sigma)$ is naturally identified with $\widetilde{P}
^{n} (\sigma _{j})$. 
The composition $m _{j}= m \circ inc _{j}$, 
\begin{equation*}
\sigma _{j} \xrightarrow{inc _{j}} \sigma  \xrightarrow{m}
\rho,
\end{equation*}
is a morphism in $D ^{n}$.
So we have the map
$$\widetilde{P} ^{n} (m _{j}): \widetilde{P} ^{n} ({\sigma} _{j}) \to \widetilde{P} ^{n+1}  ({\rho}).$$
The collection of these maps for $0 \leq j \leq n+1$ then naturally induces the map
$\partial$.

We then define $\widetilde{P} ^{n+1} (m)$ using the
identifications \eqref{equation_identification1}, \eqref{equation_identification2} as follows.
Set ${\widetilde{P} ^{n+1} (m)} $ to be  the map
$\partial$ on  $$\widetilde{P} ^{n+1} (\sigma)|_{\partial
\Delta^{n+1}}= p ^{-1} (\partial \Delta^{n+1}). $$ 
The hypothesis $S (n)$ part \ref{property_extend} ensures
that ${\widetilde{P} ^{n+1} (m)} $ has an extension to a map
$\widetilde{P} ^{n+1} (\sigma) \to \widetilde{P} ^{n+1}
(\rho)$.


As $\widetilde{P} ^{n+1}$ must extend $\widetilde{P} ^{n}$,
combined with the construction above, we have thus specified the functor
$\widetilde{P} ^{n+1}$ on a generating set of morphisms in
$D ^{n+1}$, which defines $\widetilde{P} ^{n+1}$ completely.
For example, given a degeneracy $m: \sigma ^{n} \to
\rho^{n-2}$, we may factorize it as $\sigma ^{n}
\xrightarrow{m'} \rho
^{n-1} \xrightarrow{pr} \rho ^{n-2} $, and then set:
$$\widetilde{P} ^{n+1} (m) = (\widetilde{P} ^{n+1} (pr)
= \widetilde{P} ^{n} (pr)) \circ
\widetilde{P} ^{n+1} (m').$$
Functoriality of $\widetilde{P} ^{n}$, implies that this is
well defined. And by construction $\widetilde{P} ^{n+1}$
will be a functor satisfying compatibility.
So we are done.

\end{proof}

\begin{theorem} \label{thm:classifyKan} 
Let $X$ be a smooth simplicial set. $\mathcal{U}$-small simplicial $G$-bundles $P \to X$ are ``classified by'' smooth maps $$f _{P}: X \to BG ^{\mathcal{U} }.$$ Specifically:
\begin{enumerate}
   \item  For every
   $\mathcal{U}$-small $P$ there is a natural smooth map $f
   _{P}: X \to BG ^{\mathcal{U} }   $ so that 
   $$f _{P}^{*} EG ^{\mathcal{U} }  = P$$ as simplicial 
   $G$-bundles. 
	 We say in this case that $f _{P}$
   \textbf{\emph{classifies}}  $P$. 
   \item  If $P _{1}, P _{2}$ are isomorphic
   $\mathcal{U}$-small smooth simplicial
   $G$-bundles over $X$ then the classifying
   maps $f _{P _{1} }, f _{P _{2}}$ are smoothly homotopic,
	 as in Definition \ref{def:smoothhomotopy}.
	 \label{part_isomorphicbundles}
\item  If $X = Y _{\bullet }$  for $Y$ a smooth
   manifold and $f,g: X \to BG ^{\mathcal{U} }$
   are smoothly homotopic then $P _{f} = f ^{*}
   EG ^{\mathcal{U} }, P _{g} = g ^{*} EG
   ^{\mathcal{U} }$   are isomorphic simplicial
   $G$-bundles. \label{part_homotopicMaps}
\end{enumerate}
     
\end{theorem}
Note that the above is a partly stronger (because of
equality in Part 1) and partly weaker
than just saying that isomorphism classes of $\mathcal{U}
$-small bundles over $X$ are in correspondence with smooth
homotopy classes of maps $X \to BG ^{\mathcal{U}}$. It is
strictly stronger when $X = Y _{\bullet }$ for $Y$ a smooth
manifold.
 
\begin{proof}
Set $V=BG ^{\mathcal{U} }$, $E = EG ^{\mathcal{U} }$. 
Let $P \to X$ be a $\mathcal{U}$-small simplicial $G$-bundle.  
Define $f _{P}: X \to V$  by: 
\begin{equation}
   \label{eq:fPmap}
   f _{P} (\Sigma) = \Sigma _{*} ^{*} P, 
\end{equation}
where $\Sigma \in \Delta^{d}  (X) $, $\Sigma _*:
   \Delta^{d} _{\bullet } \to X$, the induced map,
and the pull-back $\Sigma _{*} ^{*}P$ our usual
simplicial $G$-bundle pull-back.  
We check that the map $f _{P}$  is simplicial.  

Let $m: k  \to d $  be a morphism in
$\Delta$. 
We need to check that the following diagram
   commutes:
\begin{equation*}
\begin{tikzcd}
X (d)  \ar[r, "X (m) "] \ar [d, "f _{P}"] & X (k)
   \ar [d,"f _{P}"] \\
V (d)  \ar [r, "V (m) "]    &  V (k). 
\end{tikzcd}
\end{equation*}
Let $\Sigma \in X (d) $, then by push-forward 
functoriality Axiom \ref{axiom:pushforward} 
$(X (m) (\Sigma))
_{*} = \Sigma _{*} \circ m _{\bullet} $ where
$m _{\bullet }: \Delta^{k} _{\bullet } \to
\Delta^{d} _{\bullet } $   
is the simplicial map induced
by $m: \Delta ^{k} \to \Delta ^{m}$.   And so
$$f _{P} (X (m) (\Sigma)) = (\Sigma _{*} \circ m _{\bullet})
^{*} P = m _{\bullet} ^{*}
(\Sigma _{*} ^{*}P) = V (m) (f _{P} (\Sigma)),
$$ where the second equality uses Lemma
\ref{lemma:pullbackisfunctorial}.  
Hence the diagram commutes.

We now check that $f _{P}$ is smooth. Let
$\Sigma \in X (d) $, then  we have:
\begin{align*}
   (f _{P} (\Sigma)) _{*} (\sigma)  & = \sigma
   _{\bullet } ^{*} (\Sigma ^{*} _{*} P)     \\
   & = (\Sigma _{*} \circ \sigma _{\bullet} ) ^{*}
   P, \quad \text{ Lemma \ref{lemma:pullbackisfunctorial}} \\
  & = (\Sigma _{*} (\sigma)) ^{*} _{*} P,
   \quad \text{as $\Sigma _{*}$ is smooth, Lemma
   \ref{lemma:simplicesaresmooth}}  \\ 
	 & = f _{P} (\Sigma _{*} (\sigma)),  \quad \text{by \eqref{eq:fPmap} }  \\ 
	 & = (f _{P} \circ \Sigma _{*}) (\sigma),  
\end{align*}
and so $f _{P}$ is smooth. 

We check that $f _{P}^{*}E = P$.
Let $\Sigma: \Delta^{d} _{\bullet } \to X$ be
smooth, and $\sigma \in \Delta^{d} _{\bullet }$.   First, we
need the identity: 
\begin{equation} \label{eq:firstparagraph} 
\begin{aligned}  
\Delta ^{sm} f _{P} (\Sigma) (\sigma) = (f _{P} \circ \Sigma) (\sigma) = f _{P}  (\Sigma (\sigma)) & = (\Sigma
   (\sigma)) ^{*} _{*}P \text{ by
   definition of $f _{P}$ }   \\
   & = (\Sigma ^{*} \circ
   \sigma _{\bullet}) ^{*} P   \text{ as $\Sigma$
   is smooth} \\ 
   & = \sigma ^{*} _{\bullet } (\Sigma ^{*}P)
   \text{ Lemma \ref{lemma:pullbackisfunctorial}}  \\ 
   & = g(\Sigma ^{*}P)  (\sigma). 
\end{aligned}
\end {equation} 
So 
\begin{equation} \label{equation_DeltaSM}
\Delta ^{sm} f _{P} (\Sigma) = g(\Sigma ^{*}P).
\end{equation}
Then 
\begin{align*}
   f _{P} ^{*} E (\Sigma) = (E \circ \Delta ^{sm}f
   _{P}) (\Sigma) & = E
   (g(\Sigma ^{*}P)), \text{ by \eqref{eq:firstparagraph} }  \\
   & = (\Sigma ^{*}P)  (id ^{d} _{\bullet}), \text{
      definition of $E$ } \\ & =  P
   (\Sigma). 
\end{align*}
So $f _{P} ^{*}E=P$   on objects.

Now let $m$ be a morphism:
  \begin{equation*}
   \begin{tikzcd}
   \Delta^{k} _{\bullet }  \ar[r,
      "\widetilde{m} _{\bullet }"] \ar [rd,
      "\Sigma _{1}"] &  \Delta^{d} _{\bullet } \ar
      [d,"\Sigma _{2}"] \\
     &  X,
   \end{tikzcd}
   \end{equation*}
in $\Delta ^{sm} (X) $.      
We then have, for $e _{m}$ is as in the definition of $E$:
\begin{align*}
      f _{P}^{*}E (m) & =  E (\Delta ^{sm} f _{P} (m) )  \\
      & = (\Delta ^{sm} f _{P} (\Sigma _{2})) ^{b} (e 
      _{m})  \text{ by definition of  $E$} \\ 
    & = \Sigma ^{*} _{2} P (e _{m}) \text{ by
     \eqref{equation_DeltaSM}} \\
     & = (P \circ \Delta ^{sm}
     \Sigma _{2}) (e _{m}).
\end{align*} 
But $\Delta ^{sm} \Sigma _{2} (e _{m}) $   is the diagram: 
\begin{equation*}
\begin{tikzcd}
   \Delta^{k} _{\bullet }  \ar[r,
      "\widetilde{m} _{\bullet }"] \ar [rd,  bend
   right, "\Sigma _{2} \circ \widetilde{m} _{\bullet
   }"] &  \Delta^{d} _{\bullet } \ar
      [d,"\Sigma _{2} \circ id _{\bullet} ^{d}"] \\
     &  X,
\end{tikzcd} 
\end{equation*}
   i.e. it is the diagram $m$.  So $(P \circ
   \Delta ^{sm}
   \Sigma _{2}) (e _{m}) = P (m) $. 
Thus, $f _{P} ^{*} E =
   P$ on morphisms.

So we have proved the first part. We now prove the second
part. Suppose that $\phi: P _{1} \to  P _{2}$ is an
isomorphism of $\mathcal{U} $-small simplicial $G$-bundles over $X$.     
We construct a $\mathcal{U} $-small simplicial $G$-bundle
$\widetilde{P} $ over $X \times I $ 
as follows, where $I= \Delta^{1}   _{\bullet }$ as
before. 

Let $\sigma$ be a $k$-simplex of
$X$. Then $\phi$ specifies a $G$-bundle
   diffeomorphism $\phi _{\sigma}: P _{1}(
   \sigma)
   \to P _{2}(\sigma)  $ over the identity map $ \Delta ^{k} \to
\Delta ^{k}  $. 
Let $M
_{\sigma} $    be the
   mapping cylinder of $\phi
    _{\sigma }$. So that 
\begin{equation}
      \label{eq:mappingcylinder}
     M _{\sigma} = (P _{1} (\sigma) \times
   \Delta^{1}   \sqcup P _{2}
   (\sigma))/\sim,
   \end{equation}
    for $\sim$  the
   equivalence relation  generated by the
   condition $$(x,1) \in P _{1} (\sigma) \times
   \Delta^{1}  \sim \phi (x)
   \in P _{2} (\sigma).  $$ 
   Then $M_{\sigma} $ is a smooth $G$-bundle over
$\Delta ^{k} \times \Delta^{1}$.  

Let $pr_{X}, pr _{I _{ }} $ be the natural projections of $X \times I $, to $X$ respectively $I$.
Let $\Sigma$ be a $d$-simplex of $X \times I
$, for any $d$. 
Set $\sigma _{1} = pr _{X} \Sigma $, and $\sigma
    _{2} = pr _{I} (\Sigma)  $. 
Let $id ^{d}: \Delta ^{d} \to \Delta ^{d} $ be the
    identity, so $$(id ^{d}, \sigma _{2}): \Delta
    ^{d} \to \Delta ^{d} \times \Delta^{1} ,$$ 
is a smooth map, where $\sigma _{2}$ is the
    corresponding smooth
map
$\sigma _{2}: \Delta^{d} \to \Delta^{1} = [0,1]  $.
We then define $$\widetilde{P}_{\Sigma} := (id ^{d}, \sigma _{2}) ^{*} M _{\sigma _{1}},$$   
which is a smooth $G$-bundle over $\D ^{d} $.

Notice that if $\Sigma $ is in $X \times 0 _{\bullet
} \subset X \times I$, then we do \emph{not}  have
$\widetilde{P} (\Sigma ) = P _{1} (\Sigma )$, instead there
is a natural isomorphism. This is for
the same reason that fixing the
standard construction of the set theoretic pull-back,
a bundle $P \to B$ is not set theoretically equal to the bundle $id
^{*}P \to B$, for $id: B \to B$ the identity, (but
they are of course naturally isomorphic.)
However, we can adjust the construction of $\widetilde{P}
_{\Sigma }$ so that $\widetilde{P} (\Sigma ) = P _{1} (\Sigma )$ does
hold,  similarly to the inductive procedure in
the proof of Proposition \ref{prop:BGisKan}. In what follows, we
ignore this minor ambiguity.

Suppose that $\rho: \sigma  \to \sigma'$ is a
morphism in $\D ^{sm} (X) $, for $\sigma$ a
   $k$-simplex and $\sigma'$ a $d$-simplex. 
As $\phi$ is a simplicial $G$-bundle map, 
we have a commutative diagram:
\begin{equation}
   \label{eq:commatutiveNaturalphi}
\begin{tikzcd}
P _{1} (\sigma) \ar[r, "P _{1} (\rho) "] \ar [d,
   "\phi _{\sigma}"] & P _{1} (\sigma')    \ar [d,"\phi _ {\sigma'}"]
   \\
P _{2} (\sigma)  \ar [r, "P _{2} (\rho)  "]    &
P _{2} (\sigma'). 
\end{tikzcd}
\end{equation}
And so we get a naturally 
induced (by the pair of maps $P _{1} (\rho), P _{2} (\rho) $) bundle map:
\begin{equation}
     \label{eq:MprSigma} 
     \begin{tikzcd}
    M _{\sigma} \ar[r, "g _{\rho}"] \ar [d, ""] &
          M _{\sigma'} \ar [d,""] \\
  \Delta ^{k}  \times \Delta ^{1}  \ar [r, "\widetilde{\rho}
        \times id "]    &  \Delta ^{d} \times \D
        ^{1}.
     \end{tikzcd}
\end{equation} 
More explicitly,  let $q _{\sigma}: P _{1} (\sigma) \times
   \Delta^{1}   \sqcup P _{2}
   (\sigma) \to M _{\sigma}$ denote the
quotient map. Define 
$$\widetilde{g} _{\rho}: P _{1} (\sigma) \times
   \Delta^{1}   \sqcup P _{2}
   (\sigma) \to M _{\sigma'}$$  by: $$\widetilde{g}  (x,t) = q
_{\sigma'}
 ((P _{1} (\rho)
(x),t)) \in M _{\sigma'}, $$ for $$(x,t) \in P _{1} (\sigma)
\times \Delta^{1},  $$  
while $\widetilde{g}  _{\rho} (y) = q _{\sigma'}(P _{2} (\rho)
(y))  $ for $y
\in P _{2} (\sigma). $ 
By commutativity of \eqref{eq:commatutiveNaturalphi} $\widetilde{g} _{\rho}$
induces the map $g _{\rho}:M _{\sigma} \to M
_{\sigma'}$, appearing in
\eqref{eq:MprSigma}.

Now suppose we have a morphism $m: \Sigma \to
\Sigma'$ in
   $\Delta ^{sm} (X \times I) $, where $\Sigma$
is a $k$-simplex and $\Sigma'$ is a $d$-simplex.  
Then we have a commutative diagram:
 \begin{equation}
    \label{diagram:wtPm} 
 \begin{tikzcd} 
 M _{\sigma} \ar[r, "g _{pr _{X}} (m) "] \ar [d, ""] &
           M _{\sigma'} \ar [d,""] \\
 \Delta ^{k}    \times \D ^{1} \ar[r, "\widetilde{m}  \times
    id"]  &  \Delta ^{d} \times \D ^{1} \\
     \Delta ^{k} \ar [r, "\widetilde{m}"] \ar [u,
    "h _{1}"]    & \Delta ^{d} \ar [u, "h _{2}"] \\
   \widetilde{P} _{\Sigma} \ar[u] \ar[uuu, bend
     left=120]  & \widetilde{P}_{\Sigma'} \ar [u] \ar
     [uuu, bend right=120]  \\
 \end{tikzcd}
 \end{equation} 
where $h _{1} = (id ^{k}, pr _{I} (\Sigma) ) $
and $h _{2} = (id ^{d}, pr _{I} (\Sigma') )  $.
We  then readily get
an induced natural bundle map:
\begin{equation*}
   \widetilde{P} (m): \widetilde{P} _{\Sigma} \to \widetilde{P}
   _{\Sigma'},
 \end{equation*}
as left most and right most arrows in the above
commutative diagram are the
natural maps in pull-back squares, and so by
universality of the pull-back such a map exists
and is uniquely determined.
  Of course $\widetilde{P} (m) $ is   the unique map making the whole
diagram \eqref{diagram:wtPm} commute. 

With the above assignments, it is 
immediate that $\widetilde{P}
$ is indeed a functor, by the uniqueness of
the assignment
$\widetilde{P} (m) $. And this determines our
$\mathcal{U} $-small smooth simplicial $G$-bundle $\widetilde{P} \to X \times I$.   
By the
first part of the theorem, we have an induced smooth
classifying map $f _{\widetilde{P}}: X \times I
    _{ } \to V$. By construction, it is a homotopy
between $f _{P _{1}}, f _{P _{2} } $.  So we have verified the
second part of the theorem.

We now prove the third part of the theorem. Let $X
= Y _{\bullet }$. 
Suppose that $f,g: X \to V$ are smoothly homotopic, and let $H: X \times  I \to V$  be the corresponding
smooth homotopy.  Now $P _{H} = H ^{*} E$ is a
simplicial $G$-bundle over $X
\times I = (Y \times [0,1] ) _{\bullet }
$  and hence by  Theorem
\ref{lemma:inducedSmoothGbundle} $P _{H}$  is
induced by a smooth $G$-bundle $P' _{H}$ over $Y
\times [0,1] $. 

Now by construction $$P _{f} \simeq (P' _{H}| _{Y \times
\{0\}})
^{\Delta^{} }$$ and $$P _{g} \simeq (P' _{H}| _{Y \times \{1\}})
^{\Delta^{} }.$$ And $$P' _{H}| _{Y \times
\{0\}} \simeq P' _{H}| _{Y \times
\{1\}}$$ by standard smooth bundle theory and hence $$(P' _{H}| _{Y \times
\{0\}}) ^{\Delta^{} } \simeq (P' _{H}| _{Y \times
\{1\}}) ^{\Delta^{} }.$$
And so $P _{f} \simeq P _{g}.$

\end{proof}

Since Theorem \ref{lemma:inducedSmoothGbundle} works for
manifolds with corners, and since $\Delta^{k} _{\bullet
} \times I \simeq (\Delta^{k} \times \Delta^{1} ) _{\bullet }$ the proof of the theorem above readily extends to give the
following theorem.  We say that
a smooth $G$-bundle $P$ over $\Delta^{k}$ is
trivial over $\partial \Delta^{k}$,  if there is
a distinguished trivialization of $P$ over $\partial
\Delta^{k} $.
A \textbf{\emph{relative isomorphism}} of $P _{0}, P _{1}$
as above, is an isomorphism that is trivial 
trivial over $\partial \Delta^{k} $, in the respective
distinguished trivializations.

Let $v _{0} \in BG ^{\mathcal{U}} (0)$ correspond
to the trivial simplicial $G$-bundle $G \times \Delta ^{0}
_{\bullet } \to \Delta^{0}  _{\bullet }$.
\begin{theorem} \label{thm_smoothHomotopyGroupsBG} The set $\pi _{k} ^{sm} (BG
^{\mathcal{U}}, v _{0}) $ (Definition \ref{def:smoothpik})
is naturally isomorphic to the set $\mathcal{P}
^{\mathcal{U}} _{k}$
of equivalence classes of smooth, $\mathcal{U} $-small $G$-bundles $P$ over
$\Delta^{k}$ trivial over $\partial \Delta^{k} $, where $P _{0} \sim P _{1}$ if there is
a relative bundle isomorphism from $P _{0}$ to $P _{1}$.
The map 
\begin{equation} \label{eq_clk}
cl_k: \pi _{k} ^{sm} (BG
^{\mathcal{U}}, v _{0})  \to \mathcal{P}
^{\mathcal{U} } _{k} 
\end{equation}
is given by $[f] \mapsto [P _{f}]$, where
$P _{f} = f ^{*} EG ^{\mathcal{U} } (id ^{k})$, $id ^{k}:
\Delta^{k} \to \Delta^{k} $  the identity.
\end{theorem}

We now study the dependence on a Grothendieck universe $\mathcal{U}$. 
\begin{theorem} \label{thm:homotopytypeBGU} Let
   $G$ be a locally convex Lie group. Let $\mathcal{U}$ be a
   $G$-admissible universe, let $|BG
   ^{\mathcal{U}} |$ denote the geometric
   realization of $BG ^{\mathcal{U}} $ and let $BG
   ^{top}$ denote the classifying space
   of $G$ as defined by the Milnor construction
   \cite{cite_Milnoruniversalbundles}.
Then there is a weak homotopy equivalence $$e
^{\mathcal{U}}: |BG ^{\mathcal{U}}| \to   BG
^{top},$$  which is natural in the sense that
if $\mathcal{U} \in  \mathcal{U}' $ then 
\begin{equation} \label{eq:naturality}
   [e ^{\mathcal{U}'} \circ |i ^{\mathcal{U},
   \mathcal{U} '}|] = [e ^{\mathcal{U}}], 
\end{equation}
where $|i ^{\mathcal{U},
\mathcal{U}'}|: |BG ^{\mathcal{U} }| \to  |BG
^{\mathcal{U}' }|$ is the map of geometric
   realizations, induced by the natural
inclusion $i ^{\mathcal{U},
\mathcal{U}'}: BG ^{\mathcal{U} } \to  BG
^{\mathcal{U}' }$ and where $[\cdot] $ denotes
   the homotopy class. 
In particular, if $G$ has the homotopy type of a CW complex,
then for all $\mathcal{U} $ $BG ^{\mathcal{U}} $ has the
homotopy type of $BG ^{top}$. 
\end{theorem}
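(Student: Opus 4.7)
The plan is to construct $e^{\mathcal{U}}$ from the universal simplicial $G$-bundle, show it is a weak equivalence by computing $\pi_{k}$ on both sides via the common intermediary of principal $G$-bundles on $S^{k}$, and then invoke Whitehead. Naturality in $\mathcal{U}$ is then a direct consequence of the universal property of Milnor's $BG ^{top}$.

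First I would construct $e ^{\mathcal{U} }$ as follows. The universal simplicial $G$-bundle $EG ^{\mathcal{U} } \to BG ^{\mathcal{U} }$ has a geometric realization $|EG ^{\mathcal{U} }| \to |BG ^{\mathcal{U} }|$ which I claim is a numerable principal topological $G$-bundle. Local triviality is obtained simplex by simplex: each smooth $G$-bundle $P _{\Sigma } \to \Delta ^{d}$ is trivial (being a smooth $G$-bundle over a contractible manifold with corners), and the compatibility axiom of Definition \ref{def:simplicialGbun} together with the colimit description of $|BG ^{\mathcal{U} }|$ allows these trivializations to be patched coherently across the cellular structure. A partition of unity subordinate to the standard CW filtration of $|BG ^{\mathcal{U} }|$ then witnesses numerability. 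By Milnor's theorem \cite{citeMilnoruniversalbundles} this principal bundle is classified by a continuous map $e ^{\mathcal{U} }: |BG ^{\mathcal{U} }| \to BG ^{top}$, unique up to homotopy.

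Second I would show $e ^{\mathcal{U} }$ induces a bijection on $\pi _{k}$ for every $k \geq 0$ through the following zig-zag:
\begin{equation*}
\pi _{k} ^{sm} (BG ^{\mathcal{U} }) \xrightarrow{r} \pi _{k} (|BG ^{\mathcal{U} }|) \xrightarrow{e ^{\mathcal{U} } _{*}} \pi _{k} (BG ^{top}).
\end{equation*}
By Theorems \ref{thm:classifyKan} and \ref{lemma:inducedSmoothGbundle}, the leftmost set is in natural bijection with isomorphism classes of smooth $G$-bundles over $S ^{k}$; by the M\"uller--Wockel theorem cited in Remark \ref{remark:diffeologicalgroup}, the rightmost set is in natural bijection with the same isomorphism classes, so the composition is a bijection. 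Here $r$ is induced by sending a smooth map $f: S ^{k} _{\bullet } \to BG ^{\mathcal{U} }$ to $|f|$ composed with a homotopy inverse of the canonical weak equivalence $|S ^{k} _{\bullet }| \to S ^{k}$ (which is a genuine homotopy equivalence since both are CW). For surjectivity of $e ^{\mathcal{U} } _{*}$, given $[\gamma] \in \pi _{k} (BG ^{top})$ the underlying smooth bundle is classified by some $f: S ^{k} _{\bullet } \to BG ^{\mathcal{U} }$ and then $r ([f])$ is a preimage. For injectivity, if $e ^{\mathcal{U} } \circ \alpha \simeq e ^{\mathcal{U} } \circ \beta$ then $\alpha ^{*} |EG ^{\mathcal{U} }|$ and $\beta ^{*} |EG ^{\mathcal{U} }|$ are isomorphic topological $G$-bundles on $S ^{k}$, hence isomorphic as smooth bundles (M\"uller--Wockel), hence their smooth classifying maps $S ^{k} _{\bullet } \to BG ^{\mathcal{U} }$ are smoothly homotopic by Theorem \ref{thm:classifyKan}(2), yielding $\alpha \simeq \beta$ after transporting through $r$.

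Third, since $|BG ^{\mathcal{U} }|$ is a CW complex and $BG ^{top}$ has the homotopy type of a CW complex (because $G$ does), Whitehead's theorem upgrades the weak equivalence to a genuine homotopy equivalence. For naturality, for $\mathcal{U} \subset \mathcal{U} '$ the inclusion $i ^{\mathcal{U}, \mathcal{U} '}: BG ^{\mathcal{U} } \hookrightarrow BG ^{\mathcal{U} '}$ satisfies $(i ^{\mathcal{U}, \mathcal{U} '}) ^{*} EG ^{\mathcal{U} '} \cong EG ^{\mathcal{U} }$ as simplicial $G$-bundles by construction, so on realizations $|i ^{\mathcal{U}, \mathcal{U} '}| ^{*} |EG ^{\mathcal{U} '}| \cong |EG ^{\mathcal{U} }|$ as topological $G$-bundles; both $e ^{\mathcal{U} }$ and $e ^{\mathcal{U} '} \circ |i ^{\mathcal{U}, \mathcal{U} '}|$ classify this bundle and are therefore homotopic, giving \eqref{eq:naturality}. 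The main obstacle I anticipate is the first step: verifying rigorously that $|EG ^{\mathcal{U} }| \to |BG ^{\mathcal{U} }|$ is a numerable principal $G$-bundle requires a careful patching argument over the colimit structure of the realization, since trivializations are only \emph{a priori} specified on individual simplices. A secondary technical point is ensuring that the map $r$ above is well-defined on homotopy classes, for which I would rely on the fact that $|S ^{k} _{\bullet }| \to S ^{k}$ admits a preferred homotopy class of inverses.
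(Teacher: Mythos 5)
Your overall route is the same as the paper's: realize $EG^{\mathcal{U}}\to BG^{\mathcal{U}}$ to a topological $G$-bundle $|E|\to|V|$, take $e^{\mathcal{U}}$ to be its Milnor classifying map, prove $e^{\mathcal{U}}_*$ is bijective on $\pi_k$ by identifying both sides with isomorphism classes of $G$-bundles over $S^k$ (with M\"uller--Wockel bridging topological and smooth), apply Whitehead, and deduce naturality from $(i^{\mathcal{U},\mathcal{U}'})^*EG^{\mathcal{U}'}=EG^{\mathcal{U}}$. However, there is one genuine gap, and it sits exactly where the universe bookkeeping matters. You assert that $\pi_k^{sm}(BG^{\mathcal{U}})$ is in bijection with isomorphism classes of \emph{all} smooth $G$-bundles over $S^k$, citing Theorem \ref{thm:classifyKan} and Theorem \ref{lemma:inducedSmoothGbundle}. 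But Theorem \ref{thm:classifyKan} only classifies $\mathcal{U}$-\emph{small} simplicial $G$-bundles by maps into $BG^{\mathcal{U}}$. A smooth $G$-bundle $P'\to S^k$ produced by M\"uller--Wockel from a continuous classifying map is a priori only $\mathcal{U}_0$-small for some possibly larger universe $\mathcal{U}_0$, so it is a priori classified only by a map into $BG^{\mathcal{U}_0}$, not into your fixed $BG^{\mathcal{U}}$. Without closing this, surjectivity of $e^{\mathcal{U}}_*$ (and the step in your injectivity argument where you write $\alpha$ and $\beta$ as images under $r$) does not follow. The paper closes it with a dedicated argument (Lemma \ref{lemma:isomorphismhomotopy} and Corollary \ref{cor:classify}): every smooth $G$-bundle over $S^k$ is isomorphic to one given by the clutching construction, which is manifestly carried out inside $\mathcal{U}$ and hence is $\mathcal{U}$-small; similarly a homotopy in $BG^{\mathcal{U}'}$ is replaced by a $\mathcal{U}$-small mapping cylinder. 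You need some such device — this is precisely the content that makes the "independence of $\mathcal{U}$" conclusion nontrivial rather than automatic.

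A secondary point: your injectivity argument tacitly uses that $r:\pi_k^{sm}(BG^{\mathcal{U}})\to\pi_k(|BG^{\mathcal{U}}|)$ is \emph{surjective} (you "transport $\alpha,\beta$ through $r$"), whereas your zig-zag only gives that the composite $e^{\mathcal{U}}_*\circ r$ is a bijection, hence only that $r$ is injective and $e^{\mathcal{U}}_*$ is surjective. The paper's injectivity argument instead starts from an arbitrary continuous $f:S^k\to|V|$, passes to a smooth bundle topologically isomorphic to $f^*|E|$, and uses the classification plus triviality of $(e\circ f)^*EG^{top}$ to conclude $f$ is null-homotopic; you should restructure your injectivity step along those lines (or separately prove surjectivity of $r$). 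Your first step (numerability of $|E|\to|V|$) is flagged honestly and is treated at the same level of detail as in the paper, so I do not count it as a differentiating defect.
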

\begin{proof} To cut down on notation set $V:= BG ^{\mathcal{U}}$, and $E:=EG
^{\mathcal{U}}$, $v _{0} \in V$ will be as above, and we
will not disambiguate by decoration with $\mathcal{U} $.


\begin{lemma} \label{lemma_colimitbundle}
Let $P \to X$ be a simplicial $G$-bundle.
Set $|P| = \colim _{\Delta (X)} P$, where the
colimit is understood to be in the category of
topological $G$-bundles, and recalling that $P$ is a functor
$\Delta^{sm} (X) \to \mathcal{G} $, and so restricts to
a functor $\Delta (X) \to \mathcal{G} $.
Then the natural map $|P| \to |X|$ is
a topological principal $G$-bundle. We call this
the \textbf{\emph{geometric realization of $P$}}.
\end{lemma}

\begin{proof} [Proof]
Set $P' = \mathcal{F} _{1}  \circ P$, where $\mathcal{F}
_{1} $ is as in Definition \ref{def:simplicialGbun}. So there is a natural
transformation $N: P \to P'$ of $Top$ valued functors.
By naturality of the colimit there is an induced map:
\begin{equation*}
\colim _{\Delta (X)} P \to \colim _{\Delta
(X)} P',
\end{equation*}
i.e. a continuous map $|P| \to |X|$.

We check that the map $|p|$ is a locally trivial fibration. 
By a topological $d$-simplex $\Sigma: \Delta^{d} \to |X|$ we
shall mean the natural map (in the colimit
diagram) $\Delta^{d} \to |X|$
corresponding to a non-degenerate $d$-simplex of $X$.
If $v \in |X|$ is a vertex, i.e. the image of a topological 0-simplex $\Delta^{0} \to |X|$, we construct
a contractible open neighborhood 
$U _{v} \ni v$  as follows.  

Let $S _{d}$ be the set of the
topological $d$-simplices with images containing $v$.
For $\Sigma \in S _{d}$, let $\widetilde{\Sigma} = \image
\Sigma - \image \Sigma _{v}  $, where $\Sigma _{v}:
\Delta^{d-1}  \to |X|$ is the face of $\Sigma $ not
containing $v$.
Then define:
\begin{equation*}
U _{v} = \bigcup _{d} \bigcup _{\Sigma \in S _{d}}
\widetilde{\Sigma}. 
\end{equation*}

Over each $\widetilde{\Sigma} $ the bundle $|p|$ is
obviously a trivializeable topological $G$-bundle. Moreover,
any trivialization over the boundary of $\widetilde{\Sigma}$
(the collection of
the remaining faces of ${\Sigma} $) may be extended
to a trivialization over $\widetilde{\Sigma} $.
We may then proceed inductively.

Set $$U ^{k} = \bigcup _{d \leq k} \bigcup _{\Sigma \in
S _{d}} \widetilde{\Sigma},$$
by convention set $U ^{0} = \{v\}$.
We construct $G$-bundle trivialization maps $\forall k \in \mathbb{N}$:
\begin{equation*}
f ^{k}: |p| ^{-1} (U ^{k}) \to U ^{k} \times G,
\end{equation*}
with the property that each $f ^{k+1}$ extends $f ^{k}$.
Clearly, $f ^{0}$ exists. Suppose we have constructed $f ^{0},
\ldots, f ^{k}$ for some $k \geq 0$ with the property above.
We then construct $f ^{k+1}$. Let $\Sigma \in S _{k+1}$,
then over the boundary of $\widetilde{\Sigma}$ we already
have a trivialization determined by $f ^{k}$. Clearly the
set $J _{\Sigma}$ of extensions of this trivialization
over $\widetilde{\Sigma}$ is nonempty. 

Using axiom of choice fix an element of $J _{\Sigma }$ for
each $\Sigma \in S _{k+1}$, and this determines our extension $f
^{k+1}$. \footnote{Strictly speaking, to formalize the
existence of the infinite sequence $\{f ^{k}\}$ (not just
any fragment) requires a separate invocation of the axiom of
choice, or more specifically the so called axiom of
dependent choice, but this is standard.}
%
\end{proof}

By the lemma above we have a topological $G$-bundle
$$|E|  \to |V|. $$ 
Then 
\begin{equation} \label{eq:EsimeqEG}
  |E|  \simeq  e ^{*} EG ^{top},   
\end{equation}
where 
\begin{itemize}
\item $EG ^{top} $ is the universal $G$-bundle
over $BG ^{top} $.
	\item $$e  = e ^{\mathcal{U}}: |V| \to BG ^{top}$$ is uniquely determined up
to homotopy.
\item  $\simeq$ here and the rest of this
argument will mean $G$-bundle isomorphism or
simplicial $G$-bundle isomorphism, depending on context. 
\end{itemize}
We say that $e$ \textbf{\emph{classifies}}  $|E| \to |V|$.
We will show that $e$ induces an
isomorphism of all homotopy groups. 

We first prove an auxiliary lemma. Let $\mathcal{U}'$ be
a universe enlargement of $\mathcal{U}$, that is
$\mathcal{U}'$ is a universe with $\mathcal{U} \in
\mathcal{U}' $.
There is a natural inclusion map $$i=i ^{\mathcal{U},
\mathcal{U}'}: BG ^{\mathcal{U}} \to BG ^{\mathcal{U}' },$$
and $$i ^{*} EG ^{\mathcal{U} '} = E  \, \footnote
{This is indeed an equality, not just a natural
isomorphism.}.$$
\begin{lemma} \label{lemma:isomorphismhomotopy}
Let $G$ be a locally convex Lie group, then
$$i_*: \pi ^{sm} _{k} (BG ^{\mathcal{U}}, v _{0})  \to \pi ^{sm}
   _{k} (BG ^{\mathcal{U}'}, v _{0})  $$ is a set isomorphism for all
   $k \in \mathbb{N}$. 
\end{lemma}
\begin{proof}
We show that $i _{*} $ is injective. Let's abbreviate $V = BG
^{\mathcal{U}}$, $V' = BG ^{\mathcal{U}'}$, $E= EG
^{\mathcal{U} }$, $E' = EG ^{\mathcal{U}'}$.
Let $f,g: \Delta ^{k} _{\bullet} \to V$ be a pair of 
smooth maps relative to $v _{0}$. Let $P _{f}, P _{g} $   denote
the smooth bundles over $\Delta^{k} $ as given by the
correspondence of Theorem \ref{thm_smoothHomotopyGroupsBG}.
Set $f' = i \circ f$, $g' = i \circ g$ and suppose
that  $f', g'$ are smoothly homotopic. Then by Theorem
\ref{thm_smoothHomotopyGroupsBG},
$P _{f'}, P _{g'}$ are relatively isomorphic and so $P _{f'} $ and $P
_{g'}$ are relatively isomorphic. (Since we may identify $P
_{f'}, P _{g'}$ with $P _{f}, P _{g}$.) 

We now show surjectivity of $i _{*} $.  
Let $f: \Delta ^{k} _{\bullet} \to V'  $ be a smooth
relative to $v _{0}$ map. Let $P _{f} \to \Delta^{k} $ be the corresponding
smooth bundle trivial over $\partial \Delta^{k} $, via Theorem \ref{thm_smoothHomotopyGroupsBG}. 

It is elementary that any such smooth bundle is isomorphic to a bundle obtained by
the clutching construction corresponding 
to some smooth relative to $e$ map $\phi: \Delta^{k-1}  \to G$
(meaning as before the boundary is taken to $e$).
Specifically, $P'$ is isomorphic as a smooth
$G$-bundle to a bundle of the form: $$C _{\phi}
=  \Delta^{k} _{-} \times G  \sqcup  \Delta ^{k} _{+}
\times G/\sim, \,    $$ where:
\begin{enumerate}
	\item $\Delta ^{k} _{+}, \Delta ^{k} _{-}    $ are connected and closed subsets of $\Delta^{k}
$ diffeomorphic to $\Delta^{k} $, covering $\Delta^{k} $, whose intersection is the
image of a smooth embedding $i: \Delta^{k-1}  \to \Delta^{k} $ mapping
boundary to boundary, and mapping the interior to the interior
of $\Delta^{k} $. 

\item  $\sim$ is the
equivalence relation generated by the relation:  
for $(i(x), g) \in \Delta ^{k} _{-} \times G $,
$$(i (x), g) \sim (i (x), \phi (x) \cdot g) \in D ^{k} _{+} \times G.$$ 
\end{enumerate}

This gluing construction can be carried out in $\mathcal{L}$, for any $G$-admissible Grothendieck universe
$\mathcal{L}$. In particular, $C _{\phi}$  is $\mathcal{U}
$-small and $C _{\phi }$ and $P _{f}$ are relatively
isomorphic $\mathcal{U} '$-small smooth $G$-bundles. 


If we denote by $f _{\phi}$ a representative for $cl _{k}
^{-1} ([C _{\phi }])$, then 
by Theorem \ref{thm_smoothHomotopyGroupsBG} $f _{\phi }: \Delta^{k}
_{\bullet} \to V'$ is smoothly relatively homotopic to $f$. 
But $C _{\phi}$ is $\mathcal{U}$-small,   and hence $[C
_{\phi}] \in \mathcal{P} ^{\mathcal{U}} _{k}$ is  $cl _{k} ([f'])$
for a smooth based map $f': \Delta _{\bullet }  ^{k}
\to V$. It is immediate that $[i \circ
f'] = [f _{\phi}] $, since $i ^{*}E'=E$.  And so $i _{*} ([f']) = [f]$. 
\end{proof}
\begin{corollary} \label{cor:classify}  Let $G$ be
a locally convex Lie group, and let $\mathcal{U}, \mathcal{U} '$
be as in the previous lemma.  Then the natural map
$$j: \mathcal{P} ^{\mathcal{U}} _{k} \to 
\mathcal{P} ^{\mathcal{U}'} _{k},  $$ is a set
bijection.
\end{corollary}
We now prove that $e _{*}: \pi _{k}  (|V|, v _{0}) \to \pi
_{k} (BG ^{top}, x _{0})$ is an isomorphism, where $x _{0}$ is any fixed point s.t. $EG
^{\mathcal{U} }$ is given a fixed trivialization over $x
_{0}$.

Let $f: \Delta^{k}  \to BG ^{top}   $ be a continuous based
at $x _{0}$ map. By
{M\"uller}-{Wockel}~\cite[Theorem II.11]{cite_MullerSmoothVsContinousBundles}, the bundle $P _{f}: = f ^{*}
EG ^{top}  $ is topologically relatively isomorphic to a
smooth $G$-bundle $P' \to \Delta^{k}    $ trivial over the
boundary. By the axiom
of universes $P'$ is $\mathcal{U}
_{0}$-small for some $G$-admissible $\mathcal{U}
_{0} \ni \mathcal{U} $. 

By Theorem \ref{thm_smoothHomotopyGroupsBG} $[P'] 
= j (cl _{k} ([g])) $ for some smooth relative to $v _{0}$
$$g: \Delta  ^{k} _{\bullet} \to V.  $$  
Hence $P _{f}$ is relatively isomorphic as a topological
$G$-bundle to $|g| ^{*} |E|$, where $$|g|:
\Delta^{k} \simeq |\Delta^{k} _{simp}| \to |V|$$ is
the natural map induced by $g$, and $\Delta^{k} \simeq
|\Delta^{k} _{simp}|$ is the natural topological homeomorphism.
And so ${P} _{f}$  is relatively isomorphic to $|g| ^{*}
e ^{*} EG ^{top}$, and so as relative classes $[f]
= e _{*} [g]$, so that we are
done with surjectivity.

We now prove injectivity. Let $f _{0}, f _{1}: \Delta  ^{k} \to |V|  $ be
continuous and relative to $v _{0}$. We may represent the
relative classes $[f _{i}]$ by $|{g} _{i}|$,
where $$|{g} _{i}|: |\Delta^{k} _{simp}| \to |V|$$ are
induced by some relative to $v _{0}$ maps $g _{i}:
\Delta ^{k} _{simp} \to V$. This is by generalities of
homotopy groups of Kan complexes, see for instance
~[Chapter 1]\cite{cite_joyal1999introduction}. 

Let $P _{i}  \to \Delta^{k}  $ be the smooth trivial over
the boundary $G$-bundles corresponding to $g _{i}$,
meaning:
take the simplices corresponding to $g _{i}$, these
represent simplicial $G$-bundles over $\Delta^{k} _{\bullet
}$ by construction of $V$, then evaluate on $id ^{k}: \Delta^{k} \to \Delta^{k} $.

%

Now suppose that $[e \circ f _{0}] = [e \circ f _{1}] $.
Then $P _{i}$ are relatively isomorphic as topological
$G$-bundles, and so by
~\cite[Theorem II.12]{cite_MullerSmoothVsContinousBundles} $P _{i}$ are
smoothly relatively isomorphic $G$-bundles over $\Delta^{k}
$. And so by Theorem \ref{thm_smoothHomotopyGroupsBG} $g _{i}$ are smoothly homotopic. Consequently, $|g _{i} | $ are homotopic and so $[f _{0}] = [f _{1}] $. 

It follows that:
\begin{equation} \label{equation_eisom}
\forall k \in \mathbb{N}: e _{*}:
\pi _{k} (|V|, v _{0}) \to \pi _{k} (BG ^{top}, x _{0})  
\end{equation}
is an isomorphism.  



Finally, we show naturality.
Let $$|i ^{\mathcal{U} , \mathcal{U}
'}|: |V| \to  |V'|$$
denote the map induced by the inclusion $i
^{\mathcal{U}, \mathcal{U} '}$. Since  $E = 
(i ^{\mathcal{U}, \mathcal{U}'}) ^{*} E'$, 
we have
that $$|E| \simeq  |i ^{\mathcal{U} , \mathcal{U}
'}| ^{*} |E'|$$  and
so $$|E| \simeq |i ^{\mathcal{U} , \mathcal{U}
'}| ^{*} \circ (e ^{\mathcal{U}'}) ^{*} EG
^{top},$$ 
by \eqref{eq:EsimeqEG},
from which the conclusion immediately follows. And we are
done with the proof of the theorem.
\end{proof} 

\section{The
universal Chern-Weil homomorphism}
\label{sec:universalChernWeil}
In this section $G$ is a generalized Lie group  and
$\mathfrak g$ its Lie algebra.
Pick any simplicial $G$-connection $D$ on $EG ^{\mathcal{U}} \to
BG ^{\mathcal{U}}$. By construction of Section
\ref{sec:Chern-Weil} we obtain a $dg$ homomorphism:  
\begin{equation} \label{equation_dguniversal}
cw ^{D} := cw ^{EG ^{\mathcal{U}},D}: \mathcal{I} (G)  \to \Omega ^{\bullet } (BG ^{\mathcal{U}}, \mathbb{R} ).
\end{equation}
Let $cw$ represent $[cw ^{D}]$ as in Notation
\ref{notation:represent}.
This satisfies the following property:
\begin{proposition} \label{prop:abstractChernWeil}
Let $\mathcal{U} $ be a
$G$-admissible Grothendieck universe.  
Let $P \to X$ be a $\mathcal{U} $-small simplicial 
$G$-bundle and let $$cw ^{P}: \mathcal{I} (G)
\to \Omega ^{\bullet } (X, \mathbb{R} ),$$ be
as in Notation \ref{notation:represent}.  Then
$$f _{P} ^{*} \circ cw \simeq  cw ^{P},
$$  where $f _{P}: X \to BG ^{\mathcal{U} }$  is the
classifying map. 
\end{proposition} 
\begin{proof}
   This follows immediately from Lemma
   \ref{lemma:pullbackChernWeil}.
\end{proof}
%

Let $e ^{\mathcal{U} }$ be as in
Theorem \ref{thm:homotopytypeBGU}, then this is a weak
equivalence.
And so induces an isomorphism $$e ^{\mathcal{U}
} _{*}: H ^{\bullet}(|BG ^{\mathcal{U} }|, \mathbb{R} ^{})
\to H ^{\bullet } (BG ^{top}, \mathbb{R} ^{} ),$$ Hatcher
~\cite[Proposition 4.21]{cite_HatcherAlgebraic}.



Set $$c ^{\rho, \mathcal{U} } := c ^{\rho} (EG ^{\mathcal{U}
}) =  [\int (cw (\rho))] \in H ^{2k}
(BG ^{\mathcal{U}},
\mathbb{R}).$$
Then we define the cohomology class
$$c ^{\rho} := e
^{\mathcal{U}} _{*} (|c ^{\rho,
\mathcal{U} }|)  \in H ^{2k}
(BG ^{top}, \mathbb{R} ^{} ), $$ where the $G$-admissible
universe $\mathcal{U} $ is chosen
arbitrarily and where  $$|c ^{\rho,
\mathcal{U} }| \in H ^{2k} (|BG ^{\mathcal{U} }|,
\mathbb{R} ^{} )
 $$ is as in Notation
\ref{notation:geomRealizationClass}.
\begin{lemma}
   \label{lemma:independent} 
The cohomology class $c ^{\rho} $  is well-defined.
\end{lemma}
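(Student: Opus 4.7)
The plan is to reduce to the case of two nested $G$-admissible universes $\mathcal{U} \subset \mathcal{U}'$ (taking a common enlargement via the axiom of universes lets one compare any two), and then to chase the class $c^{\rho,\mathcal{U}}$ through the natural inclusion $i = i^{\mathcal{U},\mathcal{U}'}\colon BG^{\mathcal{U}} \to BG^{\mathcal{U}'}$ using naturality of both Chern-Weil and the maps $e^{\mathcal{U}}$. First I would observe that because $i^{*}EG^{\mathcal{U}'} = EG^{\mathcal{U}}$ (an actual equality, as noted just before the statement), pulling back any simplicial $G$-connection on $EG^{\mathcal{U}'}$ along $i$ yields a simplicial $G$-connection on $EG^{\mathcal{U}}$, so by Lemma \ref{lemma:pullbackChernWeil} we get $i^{*} c^{\rho,\mathcal{U}'} = c^{\rho,\mathcal{U}}$ in $H^{2k}(BG^{\mathcal{U}},\mathbb{R})$; independence from connection choice is ensured by Lemma \ref{lemma:equality:chernweilclass}.

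Next I would pass to geometric realizations. By Lemma \ref{lemma:naturalrealization} applied to $i$, this identity becomes
\begin{equation*}
   |c^{\rho,\mathcal{U}}| = |i^{\mathcal{U},\mathcal{U}'}|^{*}|c^{\rho,\mathcal{U}'}| \in H^{2k}(|BG^{\mathcal{U}}|,\mathbb{R}).
\end{equation*}
Let $n^{\mathcal{U}}\colon BG^{top} \to |BG^{\mathcal{U}}|$ and $n^{\mathcal{U}'}\colon BG^{top} \to |BG^{\mathcal{U}'}|$ be homotopy inverses of $e^{\mathcal{U}}$ and $e^{\mathcal{U}'}$, so that the pushforwards in the definition of $c^{\rho}$ read $e^{\mathcal{U}}_{*} = (n^{\mathcal{U}})^{*}$ and $e^{\mathcal{U}'}_{*}=(n^{\mathcal{U}'})^{*}$.

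The naturality clause \eqref{eq:naturality} of Theorem \ref{thm:homotopytypeBGU} gives $e^{\mathcal{U}'}\circ |i^{\mathcal{U},\mathcal{U}'}| \simeq e^{\mathcal{U}}$. Composing on the right with $n^{\mathcal{U}}$ yields $e^{\mathcal{U}'}\circ |i^{\mathcal{U},\mathcal{U}'}| \circ n^{\mathcal{U}} \simeq \mathrm{id}_{BG^{top}}$, so $|i^{\mathcal{U},\mathcal{U}'}|\circ n^{\mathcal{U}}$ is a homotopy inverse of $e^{\mathcal{U}'}$, hence homotopic to $n^{\mathcal{U}'}$. Putting these steps together:
\begin{equation*}
   e^{\mathcal{U}}_{*}(|c^{\rho,\mathcal{U}}|) = (n^{\mathcal{U}})^{*}|i^{\mathcal{U},\mathcal{U}'}|^{*}|c^{\rho,\mathcal{U}'}| = (|i^{\mathcal{U},\mathcal{U}'}|\circ n^{\mathcal{U}})^{*}|c^{\rho,\mathcal{U}'}| = (n^{\mathcal{U}'})^{*}|c^{\rho,\mathcal{U}'}| = e^{\mathcal{U}'}_{*}(|c^{\rho,\mathcal{U}'}|).
\end{equation*}
Finally, for arbitrary $G$-admissible universes $\mathcal{U}_{1}, \mathcal{U}_{2}$, pick any $G$-admissible $\mathcal{U}_{3}$ with $\mathcal{U}_{3} \ni \mathcal{U}_{1}, \mathcal{U}_{2}$ and apply the identity above twice. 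The main obstacle, or rather the only delicate point, is the compatibility of the several choices involved: one has to check that the pulled-back connection on $EG^{\mathcal{U}}$ agrees (via $i$) with the given connection on $EG^{\mathcal{U}'}$ on the nose, but this is built into the equality $i^{*}EG^{\mathcal{U}'} = EG^{\mathcal{U}}$ and the definition of pullback connection in Example \ref{example:pullbackconnection}; everything else is formal chasing of Lemma \ref{lemma:naturalrealization} and the naturality clause of Theorem \ref{thm:homotopytypeBGU}.
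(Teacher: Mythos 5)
Your argument is correct and follows essentially the same route as the paper's proof: reduce to a common universe enlargement, use Lemma \ref{lemma:pullbackChernWeil} (via $i^{*}EG^{\mathcal{U}'} = EG^{\mathcal{U}}$) together with Lemma \ref{lemma:naturalrealization} to get $|c^{\rho,\mathcal{U}}| = |i|^{*}|c^{\rho,\mathcal{U}'}|$, and then invoke the naturality clause \eqref{eq:naturality} of Theorem \ref{thm:homotopytypeBGU} to transport everything to $BG^{top}$. Your explicit manipulation with the homotopy inverses $n^{\mathcal{U}}$ is just a spelled-out version of the paper's step identifying $|i|_{*}(|c^{\rho,\mathcal{U}}|)$ with $|c^{\rho,\mathcal{U}''}|$.
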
 
\begin{proof} Given another choice of a
   $G$-admissible universe
$\mathcal{U} '$, let $\mathcal{U} '' \supset \{
   \mathcal{U}, \mathcal{U} ' \} $ be a common universe
enlargement.     
   By Lemma \ref{lemma:pullbackChernWeil} and
   Lemma \ref{lemma:naturalrealization} $$|i ^{\mathcal{U},
   \mathcal{U} ''}| ^{*} (|c ^{\rho, \mathcal{U}
   ''}|)
= |c ^{\rho, \mathcal{U}}|.$$ 
Now $|i ^{\mathcal{U},
\mathcal{U} ''}|$ is a weak equivalence by Theorem
\ref{thm:homotopytypeBGU}.
Let $$|i ^{\mathcal{U}, \mathcal{U} ''}| ^{*}: 
 H ^{\bullet } (|BG ^{\mathcal{U}' }|, \mathbb{R} ^{} ) \to
 H ^{\bullet}(|BG ^{\mathcal{U} }|, \mathbb{R} ^{}) $$ be the
 corresponding algebra isomorphism, and let $|i
 ^{\mathcal{U}, \mathcal{U} ''}| _{*}$ denote its inverse.
 
Then we have:
\begin{equation} \label{equation_iUpullback}
|i ^{\mathcal{U},
   \mathcal{U}''}| _{*} (|c ^{\rho, \mathcal{U}}|)  =
   |c ^{\rho, \mathcal{U}''}|.
\end{equation}
Consequently,   
\begin{align*}
   e^{\mathcal{U}} _{*} (|c ^{\rho, \mathcal{U} }|) & =  e ^{\mathcal{U} ''} _{*} \circ |i ^{\mathcal{U},
   \mathcal{U}''}| _{*} (|c ^{\rho, \mathcal{U}})|, \text{ by
	 the naturality part of Theorem \ref{thm:homotopytypeBGU}}
	 \\ 
   & = e ^{\mathcal{U} ''} _{*} (|c ^{\rho,
	 \mathcal{U}''}|), \text{ by
	 \eqref{equation_iUpullback}}.
\end{align*}
In the same way we have:
\begin{equation*}
   e^{\mathcal{U}'} _{*} (|c ^{\rho,
      \mathcal{U}' }|)  =  e ^{\mathcal{U}''}
_{*} (|c ^{\rho, \mathcal{U}''}|).  
\end{equation*}
So \begin{equation*}
   e^{\mathcal{U}} _{*} (|c ^{\rho,
      \mathcal{U} }|) =  e^{\mathcal{U}'} _{*} (|c ^{\rho,
      \mathcal{U}' }|),
\end{equation*}
and so we are done.

\end{proof}
We
call $c ^{\rho} \in H ^{2k} (BG ^{top}, \mathbb{R}
^{} ) $ \textbf{\emph{the universal Chern-Weil characteristic 
class associated to $\rho$}}.

\subsection{Universal cohomological Chern-Weil homomorphism} \label{sec_Universal cohomological Chern-Weil homomorphism}
Let $$hcw: \mathcal{I} (G)  \to H ^{*} (BG
   ^{top}, \mathbb{R} ),
  $$
be the algebra map sending $\rho$ to $c ^{\rho}$ as above.
Then to summarize, we have the following theorem
purely about the Milnor classifying space $BG
^{top}$, reformulating Theorem \ref{thm:A} of
the introduction: 
\begin{theorem} \label{thm:universalChernWeilclass} Let $G$
be a generalized Lie group.  The homomorphism $hcw$
satisfies the following property.
Let $G \hookrightarrow Z \to Y$ be a smooth principal
$G$-bundle.  Let $c ^{\rho} (Z) \in H ^{2k} (Y) $
denote the standard Chern-Weil class associated
to $\rho$. Then $$f _{Z}
^{*} hcw (\rho)  = c^{\rho}  (Z),
$$  where $f _{Z}: Y \to BG ^{top}$  is the
classifying map of the underlying topological
$G$-bundle.
\end{theorem} 
\begin{proof} 
Let $\mathcal{U} _{0} \ni Z$  be a $G$-admissible 
Grothendieck universe. By Proposition
\ref{prop:abstractChernWeil} $$ c ^{\rho} (Z ^{\Delta^{} }) = f _{Z ^{\Delta^{} }} ^{*} (c ^{\rho,
\mathcal{U} _{0} } ). $$  And by 
   Proposition \ref{prop:expected},  $|c ^{\rho} (Z ^{\Delta^{} })| _{sm} = c ^{\rho} 
(Z)$. So we have 
\begin{align*}
   c ^{\rho} (Z) & = |c ^{\rho} (Z ^{\Delta^{} })| _{sm}  \\
   & = |f _{Z ^{\Delta^{} }} ^{*} (c ^{\rho,
   \mathcal{U} _{0}})| _{sm}  \\
	 & = N ^{*}  (|f _{Z ^{\Delta^{} }} ^{*} c ^{\rho,
   \mathcal{U} _{0}}|), \text{ Part
	 \ref{notation:geomRealizationClasspart2} of Notation
	 \ref{notation:geomRealizationClass}}  \\ 
	 & = N ^{*} \circ |f _{Z ^{\Delta^{} }}| ^{*} (|c ^{\rho,
   \mathcal{U} _{0}}|), \text{ by  Lemma
   \ref{lemma:naturalrealization}} \\
   & =  N ^{*} \circ |f _{Z ^{\Delta^{} }}| ^{*} \circ (e
   ^{\mathcal{U} _{0}}) ^{*} c ^{\rho}, \text{ by
   definition of $c
^{\rho}$}.  
\end{align*}
Now, we have a diagram of topological $G$-bundle maps:
\begin{equation*}
\begin{tikzcd}
\vert Z ^{\Delta^{} } \vert \ar[r, ""] \ar [d, ""] &  Z \ar
[d,""] \ar [r] & EG \ar [d, ""] \\
 \vert Y _{\bullet } \vert \ar [r, "h"]   &  Y \ar [r,
 "f _{Z}"] & BG ^{top},
\end{tikzcd}
\end{equation*}
for $h$ as in \eqref{equation_h}.
And so $e ^{\mathcal{U} } \circ f _{|Z ^{\Delta^{} }|}$
being the classifying map for $|Z ^{\Delta^{} }| \to |Y
_{\bullet}|$, is homotopic to $f _{Z} \circ h$.
\begin{equation*}
\end{equation*}

Thus, $e ^{\mathcal{U} _{0}} \circ |f _{Z ^{\Delta^{} }}| \circ N $ is homotopic to $f _{Z}$.
So that  
\begin{equation*}
  c ^{\rho} 
  (Z) = f _{Z} ^{*} c ^{\rho} = f _{Z} ^{*} hcw ({\rho}),  
\end{equation*}
and we are done. 
\end{proof}
\subsection{Universal $dg$ Chern-Weil homomorphism}
\label{sec_Universal dg Chern-Weil homomorphism proof}
\begin{proof} [Proof of Theorem \ref{thm_ChernWeildgIntro}]
Let $cw ^{D}: \mathcal{I} (G) \to \Omega ^{\bullet } (BG
^{\mathcal{U}})$ be as in the preamble of
Section \ref{sec:universalChernWeil}. The first part of
the theorem readily follows by Lemma
\ref{lemma:equality:chernweilclass}. 

Now, let $P \to Y$ be a smooth,
$\mathcal{U} $-small $G$-bundle
over a smooth manifold, and set $X = Y _{\bullet}$.
Let $P ^{\Delta^{} } \to X$ denote the induced $\mathcal{U}
$-small simplicial
$G$-bundle, and $f _{P ^{\Delta^{} }}$ its classifying map.
We need to show that $cw$ satisfies the
naturality condition:
$$\Theta  \circ cw ^{P} \simeq f _{P ^{\Delta^{} }} ^{*} \circ cw. $$

By Proposition \ref{prop:abstractChernWeil} we have:
\begin{equation*}
f _{P ^{\Delta^{} }} ^{*} \circ cw \simeq cw ^{P ^{\Delta^{}
}}.
\end{equation*}
And by Part 1 of Proposition \ref{prop:expected}  $cw ^{P
^{\Delta^{}}} \simeq \Theta  \circ cw ^{P} $. And so we are done.
\end{proof}

\subsection{Relation with Whitney-Sullivan de Rham algebra}
\label{sec_Whitney-Sullivan de Rham algebra of a topological space} Let $Y$ be a topological
space and $X$ the simplicial set of
singular topological simplices in $Y$.
Set $A (Y) = \Omega ^{\bullet } (X, \mathbb{R}
^{} )$, then this is a commutative $dga$.  

Note that if $X$ is a Kan complex then we have a homotopy
equivalence of simplicial sets $|X| _{\bullet} \simeq X$
(the natural map $h: X \to |X| _{\bullet}$ is a weak
equivalence, but as both sides are Kan complexes it is then
an equivalence.)
It follows that for a Kan complex $X$ we have
a \textbf{\emph{geometric homotopy
equivalence}}  $h_*: A (|X|) \to \Omega ^{\bullet } (X) $. That
is if $h ^{-1}$ denotes a homotopy inverse of $h$, then $h
_{*} \circ h ^{-1} _{*} $ is homotopic to the identity
and $h ^{-1}_{*} \circ h _{*} $ is homotopic to the
identity. Furthermore, such a homotopy inverse for $h _*$ is
unique up to homotopy.
\begin{proof} [Proof of Theorem \ref{thm_APLMain}]
Fix any $G$ admissible $\mathcal{U} $.
As $BG ^{\mathcal{U} }$ is a Kan complex, we have a diagram
with each map a homotopy equivalence of simplicial sets:
$$BG ^{\mathcal{U} } \xrightarrow{h} |BG ^{\mathcal{U} }| _{\bullet}
\xrightarrow{e ^{\mathcal{U} _{\bullet} }} |BG ^{Top}|
_{\bullet}.$$

Let $f: A (BG ^{Top}) \to \Omega ^{\bullet} (BG
^{\mathcal{U} })$ be the $dg$ mapping induced by the
composition above.
By the discussion of the prior paragraph, and since $e ^{\mathcal{U}} $ is also a homotopy equivalence, there is a homotopy inverse $f
^{-1}$, unique up to homotopy. Then set $cw = f ^{-1} \circ
cw ^{\mathcal{U} }$.
\end{proof}
\section{Universal Chern-Weil theory for the group
of Hamiltonian symplectomorphisms} 
\label{sec:Chern-WeiltheoryonBHam}  
Let $(M, \omega) $ be a possibly non-compact 
symplectic manifold of dimension $2n$, so
that $\omega$ is a closed non-degenerate $2$-form
on $M$. Let $ \mathcal{H} = \operatorname {Ham}  (M,\omega) $ denote the group of
its
compactly generated Hamiltonian symplectomorphisms (as in
Section \ref{sec_Generalized Lie groups}), and $\mathfrak h $
its Lie algebra.  

For example, take
$M=\mathbb{CP} ^{n-1} $ with its Fubini-Study
symplectic 2-form $\omega _{st}$. Then the natural action
of $PU (n) $ on $\mathbb{CP} ^{n-1} $ is by
Hamiltonian symplectomorphisms.

In
 ~\cite{cite_ReznikovCharacteristicclassesinsymplectictopology}
 Reznikov constructs multilinear functionals 
 $$\{r _{k}\} _{k \geq 1} \subset \mathcal{I} (\mathcal{H}):$$  
 $$(H _{1}, \ldots, H _{k}) \mapsto \int _{M} H _{1} 
   \cdot \ldots \cdot H _{k} \,\omega ^{n},$$ 
  upon identifying:
  $$\mathfrak h = \begin{cases}
     C ^{\infty} _{0} (M), &\text{ if $M$ is 
     compact} \\
    C ^{\infty}_{c} (M),  &\text{ if $M$ is
     non-compact}.
 \end{cases}
$$
Here $C ^{\infty}_{0} (M)$ denotes the set of 
smooth functions $H$ satisfying $\int _{M} H \,\omega ^{n} =0$,
and $C ^{\infty}_{c} (M)$ denotes the set of 
smooth, compactly supported functions. 
In the case $k=1$, the associated functional 
vanishes whenever $M$ is compact. 

When $M$ is compact the group $\mathcal{H} $ is a Fr\'echet Lie group having the
homotopy type of a countable CW complex. Otherwise, as  mentioned in Section
\ref{sec_Generalized Lie groups}, it is 
a generalized Lie group having the homotopy type of a CW
complex.

Thus,
Theorem \ref{thm:universalChernWeilclass} implies 
the Corollary \ref{thm:RezExtends} of the 
introduction, and in 
particular we get
induced Reznikov cohomology classes  
\begin{equation}
   \label{eq:universalReznikov}
   c ^{r_{k}} \in
H ^{2k} (B \mathcal{H}, \mathbb{R} ^{} ).
\end{equation}
As mentioned, the group $PU (n) $ naturally acts on $\mathbb{CP} ^{n-1} $
by Hamiltonian symplectomorphisms. So we have an
induced map $$i: BPU (n)  \to B \operatorname {Ham}  (\mathbb{CP}
^{n-1}, \omega _{0} ).$$
Then as one application we prove Theorem \ref{thm:B} of the
introduction, reformulated as follows:
\begin{theorem} \label{thm:BPU}  [Originally 
   Kedra-McDuff
	 ~\cite{cite_KedraMcDuffHomotopypropertiesofHamiltoniangroupactions}]  
   \label{thm:ReznikovClassBPU} $$i ^{*}: H ^{k}(B
	 \operatorname {Ham}  (\mathbb{CP}
   ^{n-1}, \omega _{0}), \mathbb{R}   ) \to H
   ^{k}(B PU (n),
   \mathbb{R} ^{} )   $$  is
   surjective for all $n \geq 2$, $k \geq 0$  and so 
   \begin{equation*}
      i _{*}: H _{k}(B PU (n),
   \mathbb{R} ^{} ) \to H _{k}(B \operatorname {Ham} (\mathbb{CP}
      ^{n-1}, \omega _{0}), \mathbb{R}),     
   \end{equation*}
  is injective for all $n \geq 2$, $k \geq 0$. 
\end{theorem}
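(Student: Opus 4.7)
The plan is to use the universal Reznikov classes $c^{r_k} \in H^{2k}(B\mathcal{H}, \mathbb{R})$ of \eqref{eq:universalReznikov} and to show that the pullbacks $\{i^*(c^{r_k})\}_{k=2}^{n}$ algebraically generate $H^*(BPU(n), \mathbb{R})$. Since $PU(n)$ is a compact Lie group, the classical Chern--Weil theorem gives a graded algebra isomorphism
\[
cw: \mathbb{R}[\mathfrak{pu}(n)]^{PU(n)} \xrightarrow{\sim} H^*(BPU(n), \mathbb{R}),
\]
and by Chevalley's theorem the left hand side is a polynomial ring on $n-1$ homogeneous generators of degrees $2, 3, \ldots, n$. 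Thus the theorem reduces to an algebraic statement about $Ad_{PU(n)}$-invariant polynomials on $\mathfrak{pu}(n)$.

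Next I would identify $i^*(c^{r_k})$ in $H^*(BPU(n), \mathbb{R})$. The inclusion $PU(n) \hookrightarrow \mathcal{H}$ induces a Lie algebra map $\iota: \mathfrak{pu}(n) \to \mathfrak{h}$ sending $\xi$ to the mean-zero Hamiltonian $H_\xi \in C_0^\infty(\mathbb{CP}^{n-1})$ whose Hamiltonian vector field generates the infinitesimal action of $\xi$. Pulling Reznikov's multilinear functional back along $\iota^{\otimes k}$ yields the $PU(n)$-invariant polynomial
\[
\tilde{r}_k(\xi) = \int_{\mathbb{CP}^{n-1}} H_\xi^k \, \omega_0^{n-1} \in \mathbb{R}[\mathfrak{pu}(n)]^{PU(n)}.
\]
Testing against an arbitrary smooth principal $PU(n)$-bundle $Z \to Y$ over a smooth manifold, applying Theorem \ref{thm:universalChernWeilclass} to the associated smooth $\mathcal{H}$-bundle $Z \times_{PU(n)} \mathcal{H}$, and invoking the classical naturality of $cw$, one deduces
\[
i^*(c^{r_k}) = cw(\tilde{r}_k) \in H^{2k}(BPU(n), \mathbb{R}).
\]

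The remaining and main task is to show $\{\tilde{r}_k\}_{k=2}^{n}$ generate the algebra $\mathbb{R}[\mathfrak{pu}(n)]^{PU(n)}$. Using the moment map $\mu: \mathbb{CP}^{n-1} \to \mathfrak{pu}(n)^*$ realizing $\mathbb{CP}^{n-1}$ as a coadjoint orbit, $H_\xi(p) = \langle \mu(p), \xi \rangle$, so
\[
\tilde{r}_k(\xi) = \int_{\mathbb{CP}^{n-1}} \langle \mu, \xi \rangle^k \, \omega_0^{n-1}.
\]
For $\xi$ expressed in Cartan coordinates $\lambda_1, \ldots, \lambda_n$ subject to $\sum_i \lambda_i = 0$, integration in homogeneous coordinates (equivalently, equivariant fixed point localization) expresses $\tilde{r}_k(\xi)$ as a nonzero rational multiple of the complete homogeneous symmetric polynomial $h_k(\lambda_1, \ldots, \lambda_n)$. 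Since on the hyperplane $\sum_i \lambda_i = 0$ the polynomials $h_2, \ldots, h_n$ freely generate the symmetric polynomial algebra (by Newton's identities and Chevalley), the $\tilde{r}_2, \ldots, \tilde{r}_n$ are algebraically independent generators of $\mathbb{R}[\mathfrak{pu}(n)]^{PU(n)}$.

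Combining these steps, $i^*(c^{r_2}), \ldots, i^*(c^{r_n})$ generate $H^*(BPU(n), \mathbb{R})$ as a polynomial algebra, so $i^*$ is surjective in every degree; the injectivity of $i_*$ then follows from universal coefficients over the field $\mathbb{R}$. The hard part is the computation in the third paragraph --- verifying that the moment map integrals $\tilde{r}_k$ genuinely generate the invariant ring --- which is purely classical invariant theory but is not formal, and is essentially Reznikov's original calculation in \cite{citeReznikovCharacteristicclassesinsymplectictopology}.
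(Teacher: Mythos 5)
Your proposal is correct and follows essentially the same route as the paper: pull back the universal Reznikov classes along $i$, identify $i^*c^{r_k}$ via naturality with the Chern--Weil class of the pulled-back invariant polynomial on $\mathfrak{pu}(n)$, and conclude surjectivity because those polynomials generate $\mathbb{R}[\mathfrak{pu}(n)]^{PU(n)}$. The only difference is that the paper simply cites Reznikov for the fact that the $j^*r_k$ are (up to the usual change of generators) the Chern polynomials, whereas you sketch that moment-map computation in terms of the complete homogeneous symmetric functions $h_k$ --- which generate the same invariant ring --- so the content is the same.
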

\begin{proof} 
   Let $\mathfrak g $ denote the Lie
   algebra of $PU (n) $, and $\mathfrak h$ the Lie 
   algebra of $\operatorname {Ham} (\mathbb{CP} ^{n-1}, \omega _{0}) $. 
   Let $j: \mathfrak g  \to \mathfrak h $ denote
   the natural Lie algebra map induced by the
   homomorphism $PU (n)  \to \operatorname {Ham} (\mathbb{CP}
   ^{n-1}, \omega _{0}) $. Reznikov
~\cite{cite_ReznikovCharacteristicclassesinsymplectictopology}
shows 
that $\{j ^{*} r _{k} \} _{k>1}$  are the Chern
polynomials. In other words, the classes $$  
c^{j^{*} r _{k}} \in H ^{2k} (BPU (n),
\mathbb{R} ^{} ),$$
are the Chern classes $\{c _{k} \} _{k>1}$, which
generate real
cohomology of $BPU (n) $, as is well known.
But $c^{j^{*} r _{k}} = i ^{*}
c ^{r _{k}}$, for $c ^{r _{k}}$   as in
\eqref{eq:universalReznikov},
and so the result immediately follows.   
   \end{proof}
In 
Kedra-McDuff~\cite{cite_KedraMcDuffHomotopypropertiesofHamiltoniangroupactions} 
a proof of the above is given via homotopical 
techniques. Theirs is a difficult argument, but 
as they show their technique is also partially 
applicable to study certain 
generalized, homotopical analogues of the group 
$\mathcal{H} $. Our argument is  
elementary (at least given the general theory),   but does not obviously have 
homotopical ramifications as in 
~\cite{cite_KedraMcDuffHomotopypropertiesofHamiltoniangroupactions}. 

In   Savelyev-Shelukhin \cite{cite_SavelyevTwistedKtheory}  there are
a number of results about induced maps in (twisted)
$K$-theory. These further suggest that the map $i$ 
above should be a monomorphism in the homotopy 
category. For a start we may ask:  
\begin{question}
   \label{question:} Is the map $i$ above an
   injection on integral homology?
\end{question}
For this one may need more advanced techniques 
like
~\cite{cite_SavelyevGlobalFukayaCategoryI}.


\section {Universal coupling class for Hamiltonian 
fibrations} \label{sec:couplingClass}
Although we use here some language of symplectic 
geometry no special expertise should be necessary.
As the construction here
is a partial reformulation of our general 
constructions, for the special case of $G= \mathcal{H} =\operatorname {Ham} (M,  \omega) 
$, we will not give exhaustive details.

Let $(M, \omega) $  and $\mathcal{H} $ be as in 
the previous section, (keeping in mind our $M$ is not
assumed to be compact) and let $2n$ be the dimension 
of $M$.
\begin{definition}\label{def:hamfibration}
A \textbf{\emph{Hamiltonian $M$-fibration}} is a 
smooth fiber bundle
 $M \hookrightarrow P \to X$, with structure group $\mathcal{H} $.   
\end{definition}
Each $\mathcal{H} $-connection $\mathcal{A} $ on 
such $P$ uniquely induces a \emph{coupling 2-form} on $P$, as originally appearing in 
\cite{cite_GuilleminLermanEtAlSymplecticfibrationsandmultiplicitydiagrams}.
Specifically, this is a closed 2-form $
C _{\mathcal{A} } $ on $P$ whose restriction to fibers coincides with $\omega $  and
which has the following property. Let $\omega 
_{\mathcal{A}} \in \Omega ^{2} (X) $ denote the 
2-form defined by:  $$ \omega 
_{\mathcal{A} } (v,w) = n\int _{P _{x}} R 
^{\mathcal{A} } (v,w) \,\omega _{x}^{n}, $$ for $v,w \in T _{x} X$. Here $R ^{\mathcal{A} 
}$ as before is the curvature 2-form of 
$\mathcal{A} $, so that 
$$R ^{\mathcal{A}} (v,w) \in \lie \operatorname {Ham} (M _{x},  \omega 
_{x}) = \begin{cases}
   C ^{\infty} _{0} (P _{x}) , &\text{ if $M$ is 
   compact } \\
  C ^{\infty}_{c} (P _{x})  &\text{ if $M$ is 
   non-compact}.
\end{cases} 
$$ Note of course that $\omega _{\mathcal{A}} = 0$ when $M$ is compact.
The characterizing property of $C _{\mathcal{A} }$ is then:
\begin{equation*}  \int _{M}  {C 
^{n+1}_{\mathcal{A} }}   = \omega _{\mathcal{A}}, 
\end{equation*}
where the left-hand side is integration along the fiber. 
\footnote {$C _{\mathcal{A} }$ is not 
generally compactly 
supported but $C ^{n+1} _{\mathcal{A} } $ is,  
which is a consequence of taking $\mathcal{H} $ to 
be compactly supported Hamiltonian 
symplectomorphisms.} 

It can then be  
shown that the cohomology class 
$\mathfrak c (P) $  of $C_{\mathcal{A} }$ is 
uniquely determined by $P$ up to $\mathcal{H} 
$-bundle isomorphism. This is called the 
\textbf{\emph{coupling class of $P$}}, and it has 
important applications in symplectic geometry. See 
for instance 
~\cite{cite_McDuffSalamonIntroductiontosymplectictopology} 
for more details and some applications.

By replacing the category $\mathcal{G} $ with 
other fiber bundle categories we may define other 
kinds of simplicial fibrations over a smooth 
simplicial set. For example,  we may replace $\mathcal{G} 
$ by the category of smooth Hamiltonian 
$M$-fibrations, keeping the other axioms in the 
Definition \ref{def:simplicialGbun} intact. This then gives us the notion of a 
Hamiltonian simplicial $M$-bundle over a smooth 
simplicial set. 

Let $\mathcal{U} $ be a $\mathcal{H} $-admissible Grothendieck 
universe. 
Let $M ^{\mathcal{U}, \mathcal{H}  } $ denote the 
Hamiltonian simplicial $M$-fibration, naturally associated to 
${E} \mathcal{H}  ^{\mathcal{U}} 
\to B \mathcal{H} ^{\mathcal{U} }  $. So that for 
each $k$-simplex $\Sigma 
\in B \mathcal{H} ^{\mathcal{U} }$ we have a 
Hamiltonian $M$-fibration 
$M ^{\mathcal{U}, \mathcal{H}} _{\Sigma} \to 
\Delta^{k} $, which is the associated 
$M$-bundle to the principal $\mathcal{H} $-bundle 
${E} \mathcal{H} ^{\mathcal{U}} _{\Sigma}$. 

Fix a (simplicial)  $\mathcal{H}$-connection  
$\mathcal{A} $ on the universal $\mathcal{H} $-bundle $E \mathcal{H}  ^{\mathcal{U} } \to B \mathcal{H} ^{\mathcal{U} }$.
This induces a (simplicial) connection with the same
name $\mathcal{A} $ on $M ^{\mathcal{U},
\mathcal{H}}$.

By the discussion above, for each $k$-simplex $\Sigma 
\in B \mathcal{H} ^{\mathcal{U} }$ we have the 
associated coupling   
2-form $C _{\mathcal{A}, \Sigma}$ on the Hamiltonian
$M$-bundle $M ^{\mathcal{U}, \mathcal{H}} _{\Sigma} \to
\Delta^{k} $. The 
collection of these 2-forms 
then readily induces a cohomology class $\mathfrak c 
^{\mathcal{U} }$ on the geometric realization:
\begin{equation*}
   |{M} ^{\mathcal{U}, \mathcal{H}} | = \colim _
   {\Sigma \in \Delta (B \mathcal{H} ^{\mathcal{U}}) } M ^{\mathcal{U}, \mathcal{H}} _{\Sigma}.
\end{equation*}
This is analogous to the construction of the class $|\alpha
|$ in Section
\ref{sec:Relation with ordinary homology cohomology}.

Now by the proof of Theorem 
\ref{thm:homotopytypeBGU} we have an $\mathcal{H} 
$-structure preserving, $M$-bundle map  
over the homotopy equivalence $e ^{\mathcal{U} }$: 
$$g ^{\mathcal{U}}: |{M} ^{\mathcal{U}, 
\mathcal{H}} | \to M ^{\mathcal{H} },$$ where ${M} 
^{\mathcal{H}} $   
denotes the universal Hamiltonian $M$-fibration 
over $B \mathcal{H} $. 
And these $g ^{\mathcal{U} }$  are natural, 
so that 
if $\mathcal{U} \ni  \mathcal{U} '$ then 
\begin{equation} \label{eq:naturalityg}
   [g ^{\mathcal{U}'} \circ |\widetilde{i}  ^{\mathcal{U},
   \mathcal{U} '}|] = [g ^{\mathcal{U}}], 
\end{equation}
where $|\widetilde{i}  ^{\mathcal{U},
\mathcal{U}'}|: |{M} ^{\mathcal{U}, \mathcal{H}}| 
\to  |{M} ^{\mathcal{U}, \mathcal{H}'}|
$ is the natural $M$-bundle map over ${i}  ^{\mathcal{U},
\mathcal{U}'}$ (as in Theorem \ref{thm:homotopytypeBGU} ),  and where $[\cdot] $ denotes
the homotopy class.

Each $g ^{\mathcal{U} }$ is a homotopy 
equivalence, so we may set $$\mathfrak c := g 
^{\mathcal{U}} _{*} (\mathfrak c  ^{\mathcal{U} 
}) \in H ^{2} ({M} ^{\mathcal{H}}). $$ 
\begin{lemma}
   \label{lemma:welldefinedc}  The class 
   $\mathfrak c$ is well-defined, (independent of 
   the choice $\mathcal{U}$). 
\end{lemma}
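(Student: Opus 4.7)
The plan is to mimic Lemma \ref{lemma:independent} essentially verbatim. Given two $\mathcal{H}$-admissible universes $\mathcal{U}, \mathcal{U}'$, fix a common universe enlargement $\mathcal{U}'' \ni \{\mathcal{U}, \mathcal{U}'\}$. By symmetry it is enough to show $g^{\mathcal{U}}_{*}(\mathfrak{c}^{\mathcal{U}}) = g^{\mathcal{U}''}_{*}(\mathfrak{c}^{\mathcal{U}''})$ and the analogous identity for $\mathcal{U}'$.

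The key ingredient is a naturality statement for the coupling class under the inclusion induced map. First I would choose the simplicial $\mathcal{H}$-connection $\mathcal{A}^{\mathcal{U}}$ on $E\mathcal{H}^{\mathcal{U}}$ to be the pull-back $(i^{\mathcal{U},\mathcal{U}''})^{*} \mathcal{A}^{\mathcal{U}''}$ (this is legitimate because $i^{\mathcal{U},\mathcal{U}''}{}^{*} E\mathcal{H}^{\mathcal{U}''} = E\mathcal{H}^{\mathcal{U}}$ as an actual equality, cf.\ the footnote in the proof of Theorem \ref{thm:homotopytypeBGU}). The construction of the coupling 2-form $C_{\mathcal{A},\Sigma}$ is purely local and functorial: if $\widetilde{h}:P_{1}\to P_{2}$ is a Hamiltonian $M$-bundle map covering $h$ and pulling $\mathcal{A}_{2}$ back to $\mathcal{A}_{1}$, then $\widetilde{h}^{*}C_{\mathcal{A}_{2}} = C_{\mathcal{A}_{1}}$, since both fiber integration and the curvature 2-form are natural. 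Applying this simplex-by-simplex yields $|\widetilde{i}^{\mathcal{U},\mathcal{U}''}|^{*}\mathfrak{c}^{\mathcal{U}''} = \mathfrak{c}^{\mathcal{U}}$ at the level of De Rham cohomology.

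By Theorem \ref{thm:homotopytypeBGU} the inclusion $i^{\mathcal{U},\mathcal{U}''}$ induces a homotopy equivalence $|i^{\mathcal{U},\mathcal{U}''}|$, and since $|\widetilde{i}^{\mathcal{U},\mathcal{U}''}|$ is an $M$-bundle map over this equivalence, it is itself a homotopy equivalence. Consequently pull-back by it is invertible, and
\begin{equation*}
   |\widetilde{i}^{\mathcal{U},\mathcal{U}''}|_{*}(\mathfrak{c}^{\mathcal{U}}) = \mathfrak{c}^{\mathcal{U}''}.
\end{equation*}
Combining this with the naturality \eqref{eq:naturalityg} of the $g^{\mathcal{U}}$'s gives
\begin{equation*}
   g^{\mathcal{U}}_{*}(\mathfrak{c}^{\mathcal{U}}) = g^{\mathcal{U}''}_{*} \circ |\widetilde{i}^{\mathcal{U},\mathcal{U}''}|_{*} (\mathfrak{c}^{\mathcal{U}}) = g^{\mathcal{U}''}_{*}(\mathfrak{c}^{\mathcal{U}''}).
\end{equation*}
The same identity holds with $\mathcal{U}$ replaced by $\mathcal{U}'$, and so $g^{\mathcal{U}}_{*}(\mathfrak{c}^{\mathcal{U}}) = g^{\mathcal{U}'}_{*}(\mathfrak{c}^{\mathcal{U}'})$.

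The main obstacle, such as it is, is the step establishing $|\widetilde{i}^{\mathcal{U},\mathcal{U}''}|^{*}\mathfrak{c}^{\mathcal{U}''} = \mathfrak{c}^{\mathcal{U}}$. The compatibility choice of $\mathcal{A}^{\mathcal{U}}$ above is the cleanest route. Alternatively, one can first prove a simplicial analogue of the classical fact that the coupling class depends only on the Hamiltonian $M$-fibration up to isomorphism (a concordance argument entirely parallel to Lemma \ref{lemma:equality:chernweilclass}, using Lemma \ref{lemma:connections:exist}), in which case the naturality becomes automatic from $|\widetilde{i}^{\mathcal{U},\mathcal{U}''}|$ being an $M$-bundle map and no connection compatibility need be arranged.
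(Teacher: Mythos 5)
Your proposal is correct and is exactly the argument the paper intends: the paper's entire proof is the remark that it is ``analogous to the proof of Lemma \ref{lemma:independent}'', and you have carried out that analogy faithfully --- common universe enlargement, the naturality step $|\widetilde{i}^{\mathcal{U},\mathcal{U}''}|^{*}\mathfrak{c}^{\mathcal{U}''}=\mathfrak{c}^{\mathcal{U}}$ (via compatible choice of connection, mirroring Lemma \ref{lemma:pullbackChernWeil}, or via a concordance argument mirroring Lemma \ref{lemma:equality:chernweilclass}), and then the naturality \eqref{eq:naturalityg} of the maps $g^{\mathcal{U}}$. Your filling in of the details the paper omits is sound.
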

The proof is analogous to the proof of Lemma 
\ref{lemma:independent}.
Given this definition of the universal coupling 
class $\mathfrak c$, the proof of Theorem 
\ref{thm:couplingClass} is analogous to 
the proof of Theorem 
\ref{thm:universalChernWeilclass}.

\begin{appendices}
\section{$A _{\infty}$ homotopies}
\label{appendix_Ainfhomotopy}

We prove here the following:
\begin{proposition} \label{prp_ainftyhomotopy} A geometric
homotopy of $dg$ maps induces an $A _{\infty}$ homotopy.
\end{proposition}
This is relegated to the appendix for the following reasons. It is somewhat
esoteric, and not central to the paper. Moreover, the best reference I have is too
general, working with non-unital $A _{\infty}$
categories and functors rather than unital algebras. In the
restricted setting here, the
result above should be known. Secondly,
the geometric homotopy is a more refined 
notion and should work better for intended future applications like
Cheeger-Simons differential characters. However, $A
_{\infty}$ homotopy notion does have some advantages, as it is purely algebraic and so should be easier to manipulate. 

\begin{proof} [Proof] (Sketch)
We omit specifying the coefficient ring $\mathbb{R} ^{} $ in
what follows. 
Suppose we are given a geometric homotopy:
\begin{equation*}
\widetilde{f}: A \to \Omega
^{\bullet} (X \times I),
\end{equation*}
between $dg$ maps $f _{0}, f _{1}: A \to \Omega ^{\bullet}
(X)$.

Define $\widetilde{e}
_{i}: \Omega ^{\bullet} (X) \otimes \Omega ^{\bullet} (I)
\to  \Omega ^{\bullet} (X)$, $i=0,1$, on generators by:
\begin{equation*}
\widetilde{e} _{i} (\omega _{0} \otimes \omega _{1}
) = \begin{cases}
	0, &\text{ if degree $\omega _{1}>0$} \\
	\omega _{0} \cdot \omega _{1} (i), &\text{ if degree
	$\omega _{1} = 0$}, 
\end{cases}
\end{equation*}
where $\omega _{1} (i)$ is evaluation at $i$.

For $e _{i}$ as in Definition \ref{def_dghomotopy}, we will
factorize: 
\begin{equation*}
\begin{tikzcd}
\Omega ^{\bullet} (X \times I) \ar [r, "e
_{i}"] \ar [d, "u"] & \Omega
^{\bullet} (X) \\
\Omega ^{\bullet} (X) \otimes \Omega ^{\bullet} (I) \ar [ur,
"\widetilde{e} _{i}"], 
\end{tikzcd}
\end{equation*}
where $u$ is a certain $A _{\infty}$ homotopy inverse to the
Kunneth, injective $dg$ quasi-isomorphism $h: \Omega ^{\bullet} (X) \otimes \Omega ^{\bullet}
(I) \to \Omega ^{\bullet} (X \times I)$, $h (a \otimes b) = \pi _{{X}} ^{*}a \wedge \pi _{I} ^{*}b   $. This will readily
imply our claim.

Note first that an
$A _{\infty}$ homotopy inverse exists, see Seidel
~\cite[Corollary 1.14]{cite_SeidelFukayacategoriesandPicard-Lefschetztheory}. But we need $u$ with the specific property
above, and  we'll construct it following Seidel's argument based on
homological perturbation lemma. 

To get this $u$, note first that we have the opposite
factorization:
\begin{equation*}
\begin{tikzcd}
\Omega ^{\bullet} (X \times I) \ar [r, "e
_{i}"] & \Omega
^{\bullet} (X), \\
\Omega ^{\bullet} (X) \otimes \Omega ^{\bullet} (I) \ar [u, "h"] \ar [ur,
"\widetilde{e} _{i}"], 
\end{tikzcd}
\end{equation*}
and the image of $h$ will be denoted by $A$.

We have a splitting of chain complexes:  
\begin{equation} \label{eq_splitChainAB}
\Omega ^{\bullet} (X \times I) \simeq A \oplus B,
\end{equation}
where $B = \Omega ^{\bullet} (X \times I)/A
$ is acyclic. 

Then $u$ is the composition of maps:
\begin{equation*}
\begin{tikzcd}
 \Omega ^{\bullet} (X \times I) \ar[r, ""] & A  \ar [r,""]
  &  \Omega ^{\bullet} (X)
\otimes  \Omega ^{\bullet} (I),  
\end{tikzcd}
\end{equation*}
where the last map is the natural map (the inverse to $h$),
and the first map as a set map is the projection map
with respect to the decomposition \eqref{eq_splitChainAB}.
The latter map can be upgraded to an $A _{\infty}$ map using 
the homological perturbation lemma as described in
~\cite[Proof of Corollary
1.14]{cite_SeidelFukayacategoriesandPicard-Lefschetztheory}. 
\end{proof}

\end{appendices}

\bibliographystyle{siam}  
\bibliography{link.bib} 
\end{document}